\documentclass[pdflatex,sn-mathphys-num]{sn-jnl}


\usepackage{graphicx}%
\usepackage{multirow}%
\usepackage{amsmath,amssymb,amsfonts}%
\usepackage{amsthm}%
\usepackage{mathrsfs}%
\usepackage[title]{appendix}%
\usepackage{xcolor}%
\usepackage{textcomp}%
\usepackage{manyfoot}%
\usepackage{booktabs}%
\usepackage{algorithm}%
\usepackage{algorithmicx}%
\usepackage{algpseudocode}%
\usepackage{listings}%
\usepackage{array}

\usepackage{ytableau}
\usepackage{mathtools}
\usepackage{thm-restate}
\usepackage[shortlabels,inline]{enumitem}
\usepackage{tikz}
\usetikzlibrary{calc}
\usepackage[capitalise,nameinlink]{cleveref}



\theoremstyle{thmstyleone}%

\newtheorem{theorem}{Theorem}[section]
\newtheorem{lemma}[theorem]{Lemma}

\newtheorem{proposition}[theorem]{Proposition}
\newtheorem{corollary}[theorem]{Corollary}
\newtheorem{question}[theorem]{Question}

\theoremstyle{thmstyletwo}%
\newtheorem{example}[theorem]{Example}
\newtheorem{remark}[theorem]{Remark}

\theoremstyle{thmstylethree}%
\newtheorem{definition}[theorem]{Definition}

\raggedbottom



\numberwithin{equation}{section}

\newcommand{\N}{\mathbb{N}}

\newcommand{\Q}{\mathbb{Q}}
\newcommand{\R}{\mathbb{R}}
\DeclarePairedDelimiter{\floor}{\lfloor}{\rfloor}
\DeclarePairedDelimiter{\ceil}{\lceil}{\rceil}
\DeclareMathOperator{\Par}{Par}
\DeclareMathOperator{\pet}{pet}
\DeclareMathOperator{\Pet}{Pet}
\DeclareMathOperator{\core}{core}
\DeclareMathOperator{\htt}{ht}
\newcommand{\inv}{\mathsf{inv}}
\newcolumntype{C}[1]{>{\centering\arraybackslash}p{#1}}

\begin{document}

\title[Combinatorial proofs of Petrie Pieri rule and Plethystic Pieri rule]{Combinatorial proofs of Petrie Pieri rule and Plethystic Pieri rule}


\author*[1]{\fnm{Saintan} \sur{Wu}}\email{saw036@ucsd.edu}

\author[2,3]{\fnm{Sen-Peng} \sur{Eu}}\email{speu@math.ntnu.edu.tw}
\equalcont{These authors contributed equally to this work.}

\author[4]{\fnm{Kuo-Han} \sur{Ku}}\email{r11221008@ntu.edu.tw}
\equalcont{These authors contributed equally to this work.}

\author[4]{\fnm{Yu-Sheng} \sur{Shih}}\email{r13922048@ntu.edu.tw}
\equalcont{These authors contributed equally to this work.}

\affil*[1]{\orgdiv{Department of Mathematics}, \orgname{UC San Diego}, \orgaddress{\street{9500 Gilman Drive}, \city{La Jolla}, \postcode{92092}, \state{California}, \country{USA}}}

\affil[2]{\orgdiv{Department of Mathematics}, \orgname{National Taiwan Normal University}, \orgaddress{\city{Taipei}, \postcode{11677}, \state{Taiwan}, \country{ROC}}}

\affil[3]{\orgname{Chinese Air Force Academy}, \orgaddress{ \city{Kaohsiung}, \postcode{82047}, \state{Taiwan}, \country{ROC}}}

\affil[4]{\orgdiv{Department of Mathematics}, \orgname{National Taiwan University}, \orgaddress{\street{No.1, Sec. 4, Roosevelt Road}, \city{Taipei}, \postcode{106319}, \state{Taiwan}, \country{ROC}}}

\date{Nov 11, 2025}

\keywords{Petrie, modular Schur, plethystic Pieri rule, abacus}


\abstract{Petrie symmetric functions $G(k,n)$, also known as truncated homogeneous symmetric functions or modular complete symmetric functions, form a class of symmetric functions interpolating between the elementary symmetric functions $e_n$ and the homogeneous symmetric functions $h_n$. Analogous to the Pieri rule for $s_\mu h_n$ and the dual Pieri rule for $s_\mu e_n$, Grinberg showed that the Schur coefficients for the ``Pieri rule'' of $s_\mu G(k,n)$ can be determined by the determinant $\pet_k(\lambda,\mu)$ of Petrie matrices. Cheng, Chou, Eu, Fu, and Yao provided a ribbon tiling interpretation for the coefficient $\pet_k(\lambda,\varnothing)$.

We offer a more transparent combinatorial perspective on the structure and behavior of Petrie symmetric functions. First, we provide a weighted formula for the determinant of a Petrie matrix in terms of certain orientations of the associated Petrie graph derived from the matrix. We then give a formula for $\pet_k(\lambda,\mu)$ for all skew shapes using just combinatorics. In addition, we investigate the generating function of these orientations with respect to certain statistics. As an application of our method, we present a combinatorial proof of the plethystic Pieri rule.

As a side discovery, we find a connection between $2$-colored noncrossing set partitions and $1234$-avoiding permutations.}

\pacs[MSC Classification]{05E05}

\maketitle

\section*{Declarations}

The authors did not receive support from any organization for the submitted work. The authors have no competing interests to declare that are relevant to the content of this article.

\section{Introduction}\label{sec:intro}

\subsection{Petrie symmetric functions}
	The Petrie symmetric functions $G(k,n)$, also known as truncated homogeneous symmetric functions in \cite{fu2020truncated} and modular complete symmetric functions in \cite{Walker1994ModularSF}, are a class of symmetric functions defined as 
	\[G(k,n):= \sum_{\lambda: \lambda\vdash n, \lambda_1<k} m_\lambda,\]
	where $m_\lambda$ is the monomial symmetric function indexed by the partition $\lambda$.
	In formal power series form, we can also write this as
	\[\sum_{n\geq 0} G(k,n)t^n = \prod_{i=1}^\infty (1+x_it+x_i^2t^2+\cdots+x_i^{k-1}t^{k-1}).\]
	
	By the definition, $G(2,n)=e_n$ and $G(k,n)=h_n$ if $k>n$. Hence, the Petrie symmetric functions also serve as an interpolation between elementary symmetric functions $e_n$ and homogeneous symmetric functions $h_n$. 
	
	The expansion of $G(k,n)$ to Schur symmetric functions has drawn a great deal of attention in the literature. Liu and Polo \cite{LIU2021105352} first showed that for prime $p$ we have the expansion 
    \[G(p,p)=\sum_{i=0}^{p-2} (-1)^is_{(p-1-i,1^{i+1})} \qquad \text{and} \qquad G(p,2p-1)=\sum_{i=0}^{p-2} (-1)^is_{(p-1,p-1-i,1^{i+1})}.\]
    Grinberg \cite{Grinberg_2022} then gives an expansion formula
		\[ s_\mu\cdot G(k,n) = \sum_{\lambda \supset \mu, |\lambda|=n+|\mu|} \pet_k(\lambda,\mu) s_\lambda\]
	for any $k\geq 0$ and any partition $\mu$, where the \emph{$k$-Petrie number} $\pet_k(\lambda,\mu)$ of $\lambda,\mu$ is defined as
	\[\pet_k(\lambda,\mu):=\det(\Pet_k(\lambda,\mu)):=\det\begin{pmatrix}
		[0\leq \lambda_i-i-\mu_j+j<k]_{1\leq i,j\leq \ell(\lambda)}
	\end{pmatrix}.\]
	Here we use the Iverson symbol $[P]:= 1$ if $P$ is true and $[P]:=0$ if otherwise. We list some basic facts about the $k$-Petrie numbers:
	\begin{proposition}\label{prop:petk}
		Given two partitions $\lambda,\mu$. 
		\begin{enumerate}
			\item If $\mu\not\subset \lambda$ or $\lambda_i \geq \mu_i+k$ for some $i$, then $\pet_k(\lambda,\mu)=0$. In general, we can restrict our discussion to skew shapes $\lambda/\mu$.
			\item Though $\Pet_k(\lambda,\mu)$ might vary, $\pet_k(\lambda,\mu)$ only depend on the shape of $\lambda/\mu$. In particular, trailing zeros of $\lambda$ and $\mu$ do not change $\pet_k(\lambda,\mu)$.
			\item If $\lambda/\mu$ can be decomposed into $2$ skew shapes $\lambda^1/\mu^1,\lambda^2/\mu^2$, then $\pet_k(\lambda,\mu)=\pet_k(\lambda^1/\mu^1)\pet_k(\lambda^2/\mu^2)$.
		\end{enumerate}
	\end{proposition}

	In the case when $\mu=\varnothing$, Grinberg further gave a formula to compute $\pet_k(\lambda,\varnothing)$ explicitly. Using Grinberg's formula, Cheng et al.  \cite{cheng2022signedmultiplicitiesschurexpansions} gave a combinatorial interpretation of $\pet_k(\lambda,\varnothing)$:

	\begin{theorem}
[Cheng et al.\cite{cheng2022signedmultiplicitiesschurexpansions}]
		\label{thm:Cheng-unused}
		For $k\geq 1$ and $\lambda \vdash n$. the value of $\pet_k(\lambda,\varnothing)$ is given as follows:
		\begin{enumerate}
			\item If $\lambda_{1} \geq k$, then $\pet_k(\lambda,\varnothing)=0$.
			\item Let $\lambda_{1}<k$. If $\core_{k}(\lambda)$ has more than one part, then $\pet_k(\lambda,\varnothing)=0$; 
            otherwise, let $q=\floor*{\frac{n}{k}}$ and let $(\lambda^{0}, \lambda^{1}, \ldots, \lambda^{q})$ be any ribbon tiling of $\lambda/\core_k(\lambda)$, i.e., a sequence of $k$-partitions such that
		
			\[\operatorname{core}_{k}(\lambda)=\lambda^{0} \subset \lambda^{1} \subset \cdots \subset \lambda^{q}=\lambda\]
			and $\lambda^{j} / \lambda^{j-1}$ is a ribbon of size $k$ for each $j \in\{1, \ldots, q\}$. Then
			
			\[
			\operatorname{pet}_{k}(\lambda,\varnothing)=\prod_{j=1}^{q}(-1)^{\mathrm{ht}\left(\lambda^{j} / \lambda^{j-1}\right)+1},
			\]
			where we use the convention that an empty product equals $1$.
		\end{enumerate}
	\end{theorem}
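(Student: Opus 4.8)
The plan is to evaluate the determinant $\pet_k(\lambda,\varnothing)=\det\Pet_k(\lambda,\varnothing)$ directly from its permutation expansion, after translating the row intervals into the language of beta-sets and abacus moves. Writing $c_i=i-\lambda_i$ and $\ell=\ell(\lambda)$, the $i$-th row of $\Pet_k(\lambda,\varnothing)$ is the indicator of the column interval $[c_i,c_i+k-1]\cap[1,\ell]$, and the numbers $c_1<c_2<\cdots<c_\ell$ are strictly increasing since $\lambda$ is a partition. For Part (1), if $\lambda_1\ge k$ then the first interval $[1-\lambda_1,k-\lambda_1]$ lies entirely to the left of column $1$, so the first row vanishes and $\pet_k(\lambda,\varnothing)=0$; I would dispose of this case first. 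For Part (2) I would expand $\det\Pet_k(\lambda,\varnothing)=\sum_\sigma\operatorname{sgn}(\sigma)$ over permutations $\sigma$ with $\sigma(i)\in[c_i,c_i+k-1]$. Setting $\beta_i=\lambda_i+\ell-i$ (the beta-set of $\lambda$) and $d_i=\sigma(i)-c_i$, this becomes a signed sum over all choices of displacements $d_i\in\{0,1,\dots,k-1\}$ for which
\[\{\beta_i-d_i : 1\le i\le \ell\}=\{0,1,\dots,\ell-1\},\]
the beta-set of $\varnothing$. In other words, $\pet_k(\lambda,\varnothing)$ is the signed count of ways to slide each bead of the abacus of $\lambda$ down by fewer than $k$ positions so as to reach the ground state.

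A normalization step makes the bookkeeping uniform: I would first prove that $\pet_k(\lambda,\varnothing)$ is unchanged by appending empty parts to $\lambda$. On the bead model this adds a bead at position $0$, whose target is forced to be $0$ with displacement $0$, leaving the remaining problem isomorphic (after a global shift) to the original; this lets me assume that $\ell$ is as large a multiple of $k$ as convenient, so that ribbon removal never changes the matrix size.

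The two assertions of Part (2) are then separated. When $\core_k(\lambda)$ has more than one part I would exhibit a sign-reversing, fixed-point-free involution on the set of valid displacement vectors: the obstruction to a single-row core produces two beads whose admissible target sets overlap, and swapping their targets flips $\operatorname{sgn}(\sigma)$ while preserving validity, so the signed sum cancels to $0$ (this is exactly what happens in the smallest case $\lambda=(1,1)$, $k=3$, where the two matchings of $\{2,1\}$ onto $\{0,1\}$ carry opposite signs). When $\core_k(\lambda)$ is a single row I would instead argue by induction on $q=\floor{|\lambda|/k}$, removing the outer $k$-ribbon $\lambda/\lambda^{q-1}$. On beta-sets this is a single bead moving up by $k$; re-sorting the beta-set into decreasing order contributes the factor $(-1)^{\htt(\lambda/\lambda^{q-1})}$ coming from the beads it passes, and a further sign from the accompanying row permutation of the matrix assembles the total into $(-1)^{\htt(\lambda/\lambda^{q-1})+1}$. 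The base case $\lambda=(r)$ with $0\le r<k$ has a unique admissible displacement vector of sign $+1$, giving the empty product, so the induction yields the stated formula.

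The hard part is the sign accounting in the inductive step: verifying that moving one bead up by $k$ and restoring the decreasing order of the beta-set produces precisely the factor $(-1)^{\htt+1}$ per ribbon, and checking that this is independent of the chosen ribbon tiling. Closely related is the need to confirm that the involution in the many-part case is genuinely fixed-point-free and compatible with the padding normalization. Once the per-ribbon sign is pinned down, the product over the tiling follows immediately, and the tiling-independence is inherited from the well-definedness of the $k$-sign of a partition.
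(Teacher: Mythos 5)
Your route is genuinely different from the paper's. The paper never proves this statement directly: it is quoted from Cheng et al.\ and recovered as the $\mu=\varnothing$ case of \cref{thm:refinedJJL}, whose proof runs through the Gordon--Wilkinson tree criterion (\cref{thm:PetrieDet}), the weighted sum over good orientations of the Petrie graph (\cref{thm:PetWeightInvo}), and the abacus dictionary of \cref{sec:Graph,sec:Ribbon}. You instead work with the permutation expansion of $\det\Pet_k(\lambda,\varnothing)$ rewritten as a signed count of displacement vectors on the beta-set. Your reduction is correct (the identity $\{\beta_i-d_i\}=\{0,\dots,\ell-1\}$, the disposal of Part (1), the padding normalization matching \cref{prop:petk}, and the base case all check out), and if completed this would be a more elementary, self-contained proof of this special case; the paper's machinery buys the generalization to arbitrary $\mu$ and the generating-function refinement of \cref{sec:GenFunc}.

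However, the two steps you defer as ``the hard part'' are where the content lies, and the first has a conceptual gap as stated. You justify the vanishing when $\core_k(\lambda)$ has more than one part by ``two beads whose admissible target sets overlap, and swapping their targets flips the sign.'' Overlap of target windows is not the operative mechanism: for $\lambda=(1,1,1,1)$, $k=3$, the beta-set is $\{4,3,2,1\}$, consecutive beads have overlapping windows, and there are five admissible displacement vectors with signs $+1,+1,-1,-1,-1$ summing to $-1$ --- no cancellation, consistent with $\core_3(\lambda)=(1)$ having one part. The true obstruction is global (a cycle in the Petrie graph; equivalently two residue classes mod $k$ both saturated between $\lambda$ and the ground state), the corresponding switch generally permutes $r>2$ targets cyclically with sign $(-1)^{r-1}$, so you must also control the parity of $r$, and you must prove such a structure exists exactly when the core has at least two parts. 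Likewise, the inductive step is not a row operation --- removing a ribbon replaces a row supported on $[c_i,c_i+k-1]$ by one supported on $[c_i+k,c_i+2k-1]$ and re-sorts the rows --- so the factor $(-1)^{\htt+1}$ must be extracted by a bijection between the two permutation expansions; this is exactly what \cref{prop:removingHook} and \cref{prop:OrientWeight} accomplish in the paper's formalism. Neither gap is fatal, but as written the proposal postpones precisely the points the good-orientation machinery was built to settle.
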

	
    In 2024, Jin, Jing and Liu \cite{jin2024pierilikerulepetriesymmetric} suggested a combinatorial formula to $\pet_k(\lambda,\mu)$ when $\lambda/\mu$ is connected, in terms of tilings using $k$-ribbons and one horizontal strip. Although their proof seemed to be flawed, it turns out to be the right object of interpreting $\pet_k(\lambda,\mu)$.

    We may try to decompose $\lambda/\mu$ into ribbons $\Theta_1,\dotsc,\Theta_q$ of length $k$ and a horizontal strip $\nu/\mu$, so that the bottom-leftmost cell of each ribbon is not to the right of other ribbons. 
    If it exists, we call such a decomposition $(\nu,\Theta)$ of $\lambda/\mu$ a \emph{good pair} (or a $k$-good pair), where $\Theta=(\Theta_1,\dotsc,\Theta_q)$. 



\subsection{Petrie matrices and graphs}

Petrie symmetric functions got its name because their Schur expansion comes from the determinant of a special kind of matrices called \textit{Petrie matrices}, named after the archaeologist Flinders Petrie who developed the method of sequence dating in archaeology.

	\begin{definition}\mbox{}
		\begin{enumerate}
			\item Let $v[i,j]$ be the row vector that has $1$ at the columns $i+1,\dotsc,j$ and $0$ at the others, i.e., $(\underbrace{0,0,\dotsc,0}_{i \text{ zeros}},\underbrace{1,1,\dotsc,1}_{(j-i)\text{ ones}},\underbrace{0,0,\dotsc,0}_{(n-j)\text{ zeros}})$. Also, let $v[j,i]=-v[i,j]$ when $j>i$. 
			\item A \emph{Petrie matrix} is a $n\times n$ matrix with all its rows in the form $v[i,j]$ for some $0\leq i\leq j\leq n$.
			\item Given an $n\times n$ Petrie matrix $A$ with rows $v[i_1,j_1],v[i_2,j_2],\dotsc,v[i_n,j_n]$, we define its \emph{Petrie graph} $\mathcal{P}(A)$ to be a graph with vertices $\{0,1,\dots,n\}$, with (directed) edges $(i_1,j_1),(i_2,j_2),\dotsc,(i_n,j_n)$ labelled by $1,2,\dotsc,n$, respectively.\footnote{The vertex labels are better understood as half-integers $i+\frac 12$.}
		\end{enumerate}
	\end{definition}
	Note that all $v[i,i]$ give the same zero vector, so the Petrie graph may not be uniquely defined unless the indices $i,j$ of $v[i,j]$ are explicitly given, and this will always be the case in this paper. The determinant will always be $0$ in this case, regardless. 

    In 1974, Gordon and Wilkinson~\cite{Gordon1974DeterminantsOP} defined the Petrie graphs and used them to show that a square Petrie matrix has determinant $0$ if the graph is not a tree, and $\pm 1$ otherwise. As a corollary, $\pet_k(\lambda, \mu)$ is always $0$ or $\pm 1$. However, this algorithmic criterion is not convenient to calculate the expansion of $s_\mu\cdot G(k,n)$. 

\subsection{Main results and paper structure}

\medskip
In \cref{sec:Symbols}, preliminary background and definitions are provided.  
In \cref{sec:WeightSum}, we characterize the determinant of a Petrie matrix in terms of what we called \textit{good orientations}. Given any graph $G$ with ordered vertices $0,1,\dotsc,n$ and unordered edges $\{a_1,b_1\},\dotsc,\{a_m,b_m\}$. If $m=n$, we say an edge orientation of $G$, i.e., an assignment $(a,b)$ or $(b,a)$ to each edge $\{a,b\}$, is good if all but the last vertex $n$ receive exactly one outgoing edge, whereas the last one receives none.

For a good orientation $\sigma=\{(a_i,b_i): 1\leq i\leq n\}$, we define its \emph{order} and \emph{inversion} by
\[
|\sigma| = |\{ i : a_i > b_i \}|, \quad 
\mathsf{inv}(\sigma) = \mathsf{inv}(a_1 a_2 \dots a_n) = |\{ (i, j) : i < j \text{ and } a_i > a_j \}|,
\]
regarding $a_1 a_2 \dots a_n$ as a word. Using these terms, we prove

\begin{theorem}\label{thm:detPet_goodOrientation}
Let $A$ be a Petrie matrix with Petrie graph $\mathcal{P}(A)$. Then
\[
\det(A) = \sum_{\sigma:\,\text{good orientation of $\mathcal{P}(A)$}} (-1)^{|\sigma| + \mathsf{inv}(\sigma)}.
\]
\end{theorem}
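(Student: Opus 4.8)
The plan is to compute $\det(P)$ by expanding the Leibniz determinant formula and then interpreting the surviving terms combinatorially via good orientations. The key observation is that a Petrie matrix is a $\{0,1\}$-matrix, so for a permutation $\pi \in S_n$ (or $S_{n+1}$, matching the indexing of vertices $0,\dots,n$), the product $\prod_i P_{i,\pi(i)}$ is either $0$ or $1$. It equals $1$ exactly when every entry $P_{i,\pi(i)}$ is a one of the matrix. Thus the Leibniz formula reduces to a signed sum over those permutations $\pi$ whose graph lies entirely in the support of $P$. The natural bridge to graph theory is to let the Petrie graph $G$ be the graph on vertices indexed by the columns (or rows) with an edge recording where the consecutive runs of ones sit; each such admissible permutation should correspond to a way of assigning, to each row, one of its nonzero columns, which is precisely the data of orienting each edge.

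First I would fix the correspondence between admissible permutations and good orientations. Each row of $P$ contributes a run of consecutive ones, and I would associate to row $i$ an edge $\{a_i, b_i\}$ of $G$ encoding the two ``boundary'' vertices of that run under the standard Gordon–Wilkinson construction; choosing the column $\pi(i)$ where the permutation meets row $i$ then amounts to orienting that edge toward one endpoint. The condition that $\pi$ is a bijection translates into the statement that every vertex is hit exactly once as an image, which — after accounting for the half-open interval $[0,k)$ convention that makes vertex $n$ the unique ``sink'' — becomes exactly the good-orientation condition that every vertex except the last has out-degree one and the last has out-degree zero. I would verify this bijection carefully, since the indexing shift between the $\ell(\lambda) \times \ell(\lambda)$ matrix and the $(n+1)$-vertex graph is where mistakes hide.

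Next I would track the sign. The sign in the Leibniz formula is $\operatorname{sgn}(\pi) = (-1)^{\inv(\pi)}$, and I would decompose this into the contribution coming from the word $a_1 a_2 \cdots a_n$ of chosen sources — giving the $\inv(\sigma)$ term — and a contribution from the orientations that reverse the ``natural'' direction of an edge, which I would match to $|\sigma|$, the number of edges oriented with $a_i > b_i$. Concretely, I expect to show $\operatorname{sgn}(\pi) = (-1)^{|\sigma| + \inv(\sigma)}$ by writing $\pi$ as a composition of the source-word permutation and a product of transpositions, one for each ``backward'' edge, and checking the parities add up. Summing $\operatorname{sgn}(\pi)$ over admissible $\pi$ then yields exactly $\sum_{\sigma \text{ good}} (-1)^{|\sigma| + \inv(\sigma)}$, which is the claim.

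The main obstacle I anticipate is pinning down the sign bookkeeping in the step above: the map from a permutation to its source-word is not injective in an obvious sign-preserving way unless one is careful about how the column chosen in row $i$ determines $\pi(i)$ versus the value $a_i$ recorded by the orientation. In particular I would need to confirm that reversing a single edge's orientation changes $\pi$ by a transposition (or a controlled product of transpositions) whose parity is exactly $1$, so that it flips the sign $(-1)^{|\sigma|}$ consistently, and that simultaneously the inversion statistic $\inv(\sigma)$ correctly captures the relative ordering of the sources independent of the backward/forward choices. Getting these two statistics to be genuinely independent contributions to $\operatorname{sgn}(\pi)$ — rather than interacting — is the delicate heart of the argument, and I would likely isolate it as a short lemma about how a good orientation determines a unique permutation and how its sign factors.
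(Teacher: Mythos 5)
There is a genuine gap at the very first step: the nonzero terms of the Leibniz expansion of $P$ itself are \emph{not} in bijection with good orientations. A row $v[a_i,b_i]$ has ones in all $b_i-a_i$ columns $a_i+1,\dotsc,b_i$, so an admissible permutation may send $i$ to a column strictly inside the run, not only to one of the two ``boundary'' columns, whereas an edge has just two orientations. Concretely, for the matrix with rows $v[0,3],v[0,2],v[1,3]$ (whose Petrie graph is a tree and whose determinant is $1$) there are three admissible permutations ($123$, $213$, $312$ in one-line notation) but only one good orientation; for the matrix with two identical rows $v[0,2]$ there are two admissible permutations but no good orientation at all. So the sentence ``choosing the column $\pi(i)$ \dots\ amounts to orienting that edge toward one endpoint'' fails whenever some run of ones has length at least $3$, and your proposed identification of $\operatorname{sgn}(\pi)$ with $(-1)^{|\sigma|+\inv(\sigma)}$ has no orientation $\sigma$ attached to the interior column choices.

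The strategy can be repaired, but it needs one step you do not supply: before expanding, right-multiply $P$ by the inverse of the upper-unitriangular all-ones matrix (determinant $1$), i.e.\ replace each column by itself minus the preceding one. This turns row $i$ into $e_{a_i+1}-e_{b_i+1}$ (with the convention $e_{n+1}=0$), the signed incidence vector of the edge $\{a_i,b_i\}$, and \emph{only then} does the Leibniz expansion have exactly one nonzero term per good orientation, the product of chosen entries contributing $(-1)^{|\sigma|}$ and the permutation sign contributing $(-1)^{\inv(\sigma)}$. (Alternatively you would need a sign-reversing involution cancelling the non-extremal admissible permutations, which you do not describe.) Note also that the paper's own argument, \cref{thm:PetWeightInvo}, takes a different route: it uses row operations to reduce the unique-good-orientation case to an upper-triangular matrix via the tree structure of the Petrie graph, and kills the remaining cases by the cycle-reversal involution $\varphi$ with $w(\varphi(\sigma))=-w(\sigma)$.
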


\medskip
In \cref{sec:Graph}, we describe the Petrie graphs $\mathcal{P}(\Pet_k(\lambda,\mu))$ in terms of partitions $\lambda$, $\mu$, and build up a dictionary among skew shapes, Petrie matrices, abaci, and Petrie graphs. In particular, we prove that the good orientations are in correspondence with the good pairs. Together with \cref{thm:detPet_goodOrientation}, we give a refined story for $\pet_k(\lambda,\mu)$ in \cref{sec:Ribbon}.

\begin{theorem}
\label{thm:refinedJJL}
		Let $\lambda/\mu$ be a skew shape, not necessarily connected. Then either
\begin{enumerate}
\item there is a unique good pair ($\nu,\Theta$). In this case, $\pet_k(\lambda,\mu)=\pm 1$ and each component of $\nu/\mu$ has fewer than $k$ blocks; or
\item there are $0$ or $2^m$ good pairs for some $m\geq 1$. In this case, $\pet_k(\lambda,\mu)=0$.
\end{enumerate}
In either case, we have
		\[\pet_k(\lambda,\mu) = \sum_{(\nu,\Theta):\text{good}} \left(\prod_{i=1}^{q} (-1)^{\htt(\Theta_i)+1}\right).\]
\end{theorem}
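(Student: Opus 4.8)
The plan is to deduce \cref{thm:refinedJJL} from the good-orientation formula of \cref{thm:detPet_goodOrientation} by translating it, through the dictionary of \cref{sec:Graph}, into a statement about good pairs, with the connectedness hypothesis of \cref{thm:JJL} playing no role. First I would combine Grinberg's determinantal description $\pet_k(\lambda,\mu)=\det(\Pet_k(\lambda,\mu))$ with \cref{thm:detPet_goodOrientation} applied to $\Pet_k(\lambda,\mu)$, reading its Petrie graph $G$ off the abacus of $(\lambda,\mu)$ via the dictionary. This yields
\[
\pet_k(\lambda,\mu)=\sum_{\sigma\ \text{good}}(-1)^{|\sigma|+\inv(\sigma)},
\]
a purely combinatorial sum that replaces the plethystic Murnaghan--Nakayama machinery of~\cite{jin2024pierilikerulepetriesymmetric} and is insensitive to whether $\lambda/\mu$ is connected.

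The core of the argument is a sign-preserving bijection $\Phi$ between good orientations of $G$ and good pairs $(\nu,\Theta)$ for $\lambda/\mu$. Each edge of $G$ encodes a length-$k$ bead move on the abacus, and its orientation records which endpoint is occupied; I would show that reading the oriented edges produces, simultaneously, a horizontal strip $\nu/\mu$ and a left-justified $k$-tiling $\Theta$ of $\lambda/\nu$, and that the construction is invertible, the horizontal strip corresponding to the sign-neutral part of $G$ near the root and the ribbons to its length-$k$ edges. The essential point is the matching of weights: I would prove that under $\Phi$,
\[
(-1)^{|\sigma|+\inv(\sigma)}=\prod_{i=1}^{q}(-1)^{\htt(\Theta_i)+1},
\]
by checking that the height of each ribbon $\Theta_i$ accounts for its local contribution to $|\sigma|$ while the relative order of the ribbons' starting rows accounts for $\inv(\sigma)$. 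Feeding this identity into the sum above gives the ``Moreover'' formula at once, valid in every case.

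To obtain the dichotomy I would analyze the connected components of $G$. A good orientation forces out-degree one at every vertex except the global root $n$, so the component containing $n$ must be a tree (oriented toward $n$ in a unique way), while every other component must be unicyclic, its cycle admitting exactly two consistent orientations. Hence the number of good orientations is $1$ when $G$ is a tree and $2^{m}$ when $G$ has $m\ge 1$ cyclic components (and $0$ when some component fails these constraints); transporting this count through $\Phi$ gives the stated count of good pairs. In the tree case the sum has a single term, so $\pet_k(\lambda,\mu)=\pm 1$, and I would read the size condition on the components of $\nu_0/\mu$ directly off the horizontal-strip part of $\Phi$. In the remaining cases the Gordon--Wilkinson theorem gives $\det(\Pet_k(\lambda,\mu))=0$, whence $\pet_k(\lambda,\mu)=0$.

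The step I expect to be the main obstacle is the weight-matching in $\Phi$ together with the vanishing in the $2^{m}$ case. While Gordon--Wilkinson yields $\pet_k(\lambda,\mu)=0$ for free, a self-contained combinatorial proof should instead exhibit a sign-reversing involution: flipping the orientation of a single designated cycle should reverse $(-1)^{|\sigma|+\inv(\sigma)}$, pairing the $2^{m}$ terms into cancelling pairs. A cycle flip of length $\ell$ toggles each of its edges between the up and down states, shifting $|\sigma|$ by $\ell \bmod 2$ and simultaneously permuting the sources that define the word underlying $\inv(\sigma)$; establishing that these two parity changes always sum to an odd number—equivalently, tracking exactly how $\inv(\sigma)$ moves relative to $\ell$ under the flip—is the delicate computation on which the whole argument rests.
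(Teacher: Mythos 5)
Your proposal follows essentially the same route as the paper: reduce to the good-orientation sum of \cref{thm:detPet_goodOrientation}, construct the bijection between good orientations and good pairs via the abacus dictionary, match weights, and obtain the dichotomy from the tree/unicyclic component analysis together with the cycle-flip sign-reversing involution (this is exactly the combination of \cref{thm:PetWeightInvo}, \cref{prop:OrientWeight}, \cref{prop:initial}, and \cref{prop:translation} in the paper). One minor bookkeeping slip: in the paper's weight matching it is the \emph{number} of ribbons that equals $|\sigma|$ and the total \emph{height} $\sum_i\htt(\Theta_i)$ that equals $\inv(\sigma)$ --- the reverse of your attribution --- though the combined sign identity you state is the correct one and the slip does not affect the argument.
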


\medskip
Note that good pairs $(\nu,\Theta)$ can have different number of ribbons in $\Theta$, and the power-of-2 phenomenon does not hold when we fix $|\Theta|$. The true story is binomial. Consider the generating functions of the good orientations \[D_k(\lambda/\mu; t,q)=\sum_{\sigma\,: \text{good orientation of }\mathcal{P}(\Pet_k(\lambda,\mu))} t^{|\sigma|} q^{\mathsf{inv}(\sigma)}.\] Note that \cref{thm:detPet_goodOrientation} says $\pet_k(\lambda,\mu)=D_k(\lambda/\mu; -1,-1)$. 

With the dictionary between good orientations and good pairs, we factor $D_k(\lambda/\mu; t,q)$ nicely in \cref{sec:GenFunc}, :
\begin{theorem}\label{thm:binomial}
        \[
            D_k(\lambda/\mu; t,q) = t^aq^b (1+q)^{\mathsf{c}_k(\lambda, \mu)} (1+t)^{\mathsf{nc}_k(\lambda, \mu)},
            \]
        where the statistics $a,b$, $\mathsf{c}_k(\lambda, \mu)$, and $\mathsf{nc}_k(\lambda, \mu)$ is defined in the proof in \cref{sec:GenFunc}.
\end{theorem}
Consequently, let $D_{k,n}(\lambda/\mu)$ denote the number of good pairs of $\lambda/\mu$ with $|\Theta|=n$. Then $\sum_{n\geq 0} D_{k,n}(\lambda,\mu)t^n = D_k(\lambda,\mu; t,1)$, and $D_{k,n}=2^{\mathsf{c}_k(\lambda, \mu)}\binom{\mathsf{nc}_k(\lambda, \mu)}{n-a}$ for $a\leq n\leq a+\mathsf{nc}_k(\lambda, \mu)$, and $0$ otherwise. 
\medskip

In \cref{sec:Plethysm}, we apply our method to give another combinatorial proof of the \emph{Plethystic Pieri rule}, first proved by Thibon et al.~\cite{Desarmenien1994}. For a symmetric function $f(x_1, x_2, \dots)$, let $f \circ p_k := f(x_1^k, x_2^k, \dots)$ be the \emph{plethysm} of $f$ and the power-sum symmetric function $p_k$. The dual Plethystic Pieri rule states:

\begin{theorem} \label{thm:plethysticPieri}
Let $\mu$ be a partition and $n, k \in \mathbb{N}$. Then
\[(e_n \circ p_k)\, s_\mu = \sum_{\lambda} (-1)^{\sum_{i=1}^n \text{ht}(\Theta_i)} s_\lambda,\]
where $\lambda$ runs over the partitions obtained from $\mu$ by adding $n$ $k$-ribbons $\Theta_1, \Theta_2, \dots, \Theta_n$ that are left-justified, i.e., if $(\mu,(\Theta_1,\dotsc,\Theta_n))$ is a good pair of $\lambda/\mu$ with trivial horizontal strip. 
\end{theorem}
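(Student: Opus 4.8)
The plan is to reduce \cref{thm:plethysticPieri} to the skew Petrie Pieri rule of \cref{thm:refinedJJL} by inverting a generating-function identity. I would first use the product formula for the Petrie symmetric functions together with the factorization $\prod_{i\geq1}(1+x_it+\cdots+x_i^{k-1}t^{k-1})=\prod_{i\geq1}(1-x_i^kt^k)\big/\prod_{i\geq1}(1-x_it)$ to record the identity $\sum_N G(k,N)t^N=\left(\sum_r(-1)^r(e_r\circ p_k)t^{kr}\right)\left(\sum_m h_mt^m\right)$, which follows from $\prod_i(1-x_i^kt^k)=\sum_r(-1)^r(e_r\circ p_k)t^{kr}$ and $\prod_i(1-x_it)^{-1}=\sum_m h_mt^m$. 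Multiplying through by $\prod_i(1-x_it)=\sum_j(-1)^je_jt^j$ and comparing coefficients of $t^{kn}$ inverts this to
\[e_n\circ p_k=\sum_{j=0}^{kn}(-1)^{n+j}\,G(k,kn-j)\,e_j .\]

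Next I would compute the Schur coefficient $\langle(e_n\circ p_k)\,s_\mu,s_\lambda\rangle$ by substituting this expression and invoking the two combinatorial rules already in hand. The dual Pieri rule gives $e_j\,s_\mu=\sum_\kappa s_\kappa$, summed over $\kappa$ with $\kappa/\mu$ a vertical strip of size $j$, and Grinberg's expansion together with \cref{thm:refinedJJL} gives $\langle G(k,kn-j)\,s_\kappa,s_\lambda\rangle=\pet_k(\lambda,\kappa)=\sum_{(\rho,\Theta)}(-1)^{q+\sum_i\htt(\Theta_i)}$, the sum ranging over good pairs $(\rho,\Theta)$ for $\lambda/\kappa$ (so $\rho/\kappa$ is a horizontal strip and $\Theta$ is a left-justified $k$-tiling of $\lambda/\rho$ into $q$ ribbons). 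Collecting terms and writing $r=|\kappa/\mu|$, I would reorganize the triple sum by fixing the good pair $(\rho,\Theta)$ first, obtaining
\[\langle(e_n\circ p_k)\,s_\mu,s_\lambda\rangle=\sum_{(\rho,\Theta)}(-1)^{\,n+q+\sum_i\htt(\Theta_i)}\left(\sum_\kappa(-1)^{\,|\kappa/\mu|}\right),\]
where for each fixed $(\rho,\Theta)$ the inner sum runs over all $\kappa$ with $\mu\subseteq\kappa\subseteq\rho$, $\kappa/\mu$ a vertical strip and $\rho/\kappa$ a horizontal strip.

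The crux is the inner cancellation $\sum_\kappa(-1)^{|\kappa/\mu|}=[\rho=\mu]$. Grouping by sizes, this inner sum equals $\langle(\sum_{r+m=|\rho/\mu|}(-1)^r h_m e_r)\,s_\mu,s_\rho\rangle$, and $\sum_{r+m=N}(-1)^rh_me_r=[N=0]$ is the coefficient of $t^N$ in $H(t)E(-t)=1$; combinatorially it is the classical sign-reversing involution pairing a box of the vertical strip with a box of the horizontal strip. Hence only the good pairs with $\rho=\mu$ survive, which forces $\kappa=\mu$ and $r=0$, turns $\Theta$ into a left-justified $k$-tiling of $\lambda/\mu$, and — since then $kq=|\lambda/\mu|=kn$ gives $q=n$ — collapses the prefactor to $(-1)^{n+q}=1$. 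This leaves
\[\langle(e_n\circ p_k)\,s_\mu,s_\lambda\rangle=\sum_{\Theta}(-1)^{\sum_{i=1}^n\htt(\Theta_i)},\]
summed over left-justified $k$-tilings $\Theta=(\Theta_1,\dots,\Theta_n)$ of $\lambda/\mu$, which is exactly the asserted formula.

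The main obstacle I anticipate is the bookkeeping for the three interleaved sums — the vertical strip from $e_j$, the horizontal strip $\rho/\kappa$ from the good pair, and the tiling $\Theta$ — so that the $E(-t)H(t)=1$ cancellation applies verbatim, together with checking that \cref{thm:refinedJJL} may be invoked unconditionally even when $\lambda/\kappa$ is disconnected. The sign accounting is delicate but mechanical: the per-ribbon weight $(-1)^{\htt+1}$ of \cref{thm:refinedJJL} is reconciled with the per-ribbon weight $(-1)^{\htt}$ demanded here precisely by the factor $(-1)^{n+j}$ produced by the inversion, and once the cancellation fixes $q=n$ the two match.
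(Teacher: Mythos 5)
Your proposal is correct and is essentially the paper's own argument: the paper likewise writes $E_k(-t^k)s_\mu=\bigl(\sum_N G(k,N)t^N\bigr)E(-t)s_\mu$, expands $E(-t)s_\mu$ by the dual Pieri rule and $G(k,\cdot)s_\kappa$ by \cref{thm:refinedJJL} into good pairs, and collapses the double sum over the vertical strip $\kappa/\mu$ and the horizontal strip $\rho/\kappa$ via $H(t)E(-t)=1$, leaving exactly the left-justified tilings of $\lambda/\mu$ with the stated sign. Your version merely extracts the coefficient of $t^{kn}$ at the outset instead of carrying the generating variable throughout; the ingredients, the order in which they are applied, and the final cancellation are the same.
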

Note that different left-justified tilings give different $\lambda$.\footnote{One can either check this by checking the number of boxes at each diagonal, or by noticing that we have to keep removing ribbons from the bottom-leftmost box of $\lambda/\mu$ to get the tiling.} We may as well sum over possible ways of adding $n$ left-justified $k$-ribbons.

\begin{remark}
    The Plethystic Pieri rule addresses $(h_n\circ p_k)\cdot s_\mu$ using top-justified ribbons. In some texts, this pair of rules is referred to as \textit{Plethystic Murnaghan-Nakayama rules} by considering them a generalization of Murnaghan-Nakayama rule.
\end{remark}

In the concluding section \cref{sec:Remark}, we study a certain type of Petrie matrices that we called \emph{Cycle Petrie matrix} and describe them with a reflection rule. We also pose a conjecture of the red-blue Motzkin path defined in \cref{sec:GenFunc} and describe its relation between $2$-colored noncrossing set partitions and $1234$-avoiding permutations. 
\bigskip

\section{Notation and Preliminaries}\label{sec:Symbols}
	In this section, we introduce symbols for Young diagrams, ribbons, and symmetric functions, as well as some basic facts about them. Most of the symbols follow from \cite{macdonald1998symmetric}.
	\subsection{Young diagrams}
	A \emph{partition} $\lambda$ of $n$ is a sequence $(\lambda_1,\lambda_2,\dotsc,\lambda_\ell)$ of integers such that $\lambda_1\geq \lambda_2\geq\cdots\geq \lambda_\ell> 0$ and $\lambda_1+\lambda_2+\cdots+\lambda_\ell=n$. Each entry $\lambda_i$ is a \emph{part} of $\lambda$. We write $\lambda \vdash n$ to show $|\lambda|=n$ (\emph{size} of $\lambda$) and $\ell(\lambda)=\ell$ (\emph{length} of $\lambda$). For the index out of bounds, we define $\lambda_{i}=0$ for $i>\ell$. Let $\Par$ be the set of all partitions.
	
	For partitions $\lambda$, we associate the \emph{Young diagram/shape}, also denoted as $\lambda$, with a left-justified array of cells with $\lambda_i$ cells in the $i$-th row, presented in English Notation. For partition $\lambda,\mu$, we define $\mu\subseteq \lambda$ if $\mu_j\leq \lambda_j$ for every $j$. The conjugate of $\lambda$ is a partition whose diagram is a reflection of shape $\lambda$ by the main diagonal, denoted by $\lambda'$. 
	
	\ytableausetup{smalltableaux}
	\begin{figure}[htbp!]
		\centering
		\[\begin{array}{cccc}
			\ydiagram{6,4,4,2,1} & \ydiagram{5,4,3,3,1,1}& \ydiagram[*(white) ]
			{6,3,2,2,1}
			*[*(gray)]{0,4,4}
			& \ydiagram[*(white) ]
			{3,1,1}
			*[*(gray)]{6}
			*[*(gray)]{0,3,2}
			*[*(gray!50)]{0,4,4}
			*[*(gray!50)]{0,0,0,2,1}
			\\
			(6,4,4,2,1) & (5,4,3,3,1,1) & (4,4,4)/(4,3,2) & (3,1,1)\\
			\lambda & \lambda' & \text{a 3-ribbon of }\lambda & \core_3(\lambda)\\
		\end{array}\]
		\caption{Young diagram, the conjugate, a $3$-ribbon, and the $3$-core of $\lambda=(6,4,4,2,1)$.}
		\label{fig:definition}
	\end{figure}
	
	The diagram of a \emph{skew shape} $\lambda/\mu$, where $\mu\subseteq\lambda$, is obtained by removing $\mu$ from $\lambda$ at the north-west corner. A skew shape is a \emph{horizontal strip} if its shape contains at most one box in each column.
	
	We say $\lambda/\mu$ is a \emph{ribbon} (or \emph{border strip} or \emph{rim hook}) if it is a connected diagram with no $2\times 2$ square. The \emph{height} $\htt(\lambda/\mu)$ of a ribbon is defined to be the number of row borders it crosses, or one less than the number of rows that $\lambda/\mu$ occupies. 

    The \emph{$k$-core} $\core_k(\lambda)$ is obtained from $\lambda$ by successively removing all ribbons of size $k$. We note the fact that $\core_k(\lambda)$ is unique regardless of the $k$-ribbons removed. This can be seen easily using abacus notation defined in the next subsection.
    
    \subsection{Maya diagrams, Abaci and runners}
	The \emph{content} of a grid $(i,j)$ in the diagram is $j-i$. The \emph{Maya diagram} of a Young diagram $\lambda$ is a two-way infinite sequence 
	\[a_i= \begin{cases}
		1, & i= \lambda_j-j+1 \text{ for some }j\geq 1;\\
		0, & \text{otherwise,}
	\end{cases}\]

    \begin{remark}
        Some authors use half integers $n-\frac12$ for the indices of $a_n$ in Maya diagrams. Here, we our theory works best with ceiling as we will correspond $a_{-i}$ (or $a_{-i-\frac 12}$) to vertex $i$ (which shall be understood as $i+\frac 12$) in Petrie graphs.
    \end{remark}
    
	When we place the Maya diagram along the outer border of $\lambda$, $1$ appears at each vertical segment of the outer border, as the definition suggests (see \cref{fig:abacus}). Focusing on entries $0$, we can equivalently define the Maya diagram using $\lambda'$:
	\[a_i = \begin{cases}
		0, & i=j-\lambda'_j \text{ for some }j;\\
		1, &\text{otherwise}.
	\end{cases}\]
	\ytableausetup{nosmalltableaux,centertableaux}
	\begin{figure}[htbp!]
		\centering
		\[\begin{array}{cccc}
			\begin{ytableau}
				0& 1 &2  &3 \\
				-1& 0 &1  & 2\\
				-2& -1 & 0\\
				-3
			\end{ytableau}&
			\begin{tikzpicture}[inner sep=0in,outer sep=0in, baseline=0pt]
				\def\0 {\raisebox{-5pt}{0}}
                \def\Csize {0.545cm} 
				\node (n) {
					\begin{ytableau}
						{}& {} &{}  &{} & \none[1\ \ \ ]\\
						{}& {} &{}  & \0 &\none[1\ \ \ ] \\
						{}& \0 & \raisebox{-5pt}{$\textcolor{red}{\textbf{0}}$} &\none[1\ \ \ ] \\
						\0 &\none[1\ \ \ ] \\
						\none[1\ \ \ ]\\
						\none[\vdots\ \ \ ]\\
				\end{ytableau}};
				\draw[red,thick] ([xshift=-\Csize] n.north east)--( n.north east) node [midway,above=1pt]{\textcolor{black}{0}};
				\draw[red,thick] (n.north east)--([xshift=\Csize] n.north east) node [midway,above=2pt]{\textcolor{black}{$\cdots$}};
				\draw[red,thick] (n.south west) 
				-- ([yshift=2*\Csize]n.south west)
				-- ([yshift=2*\Csize,xshift=\Csize]n.south west)
				-- ([yshift=3*\Csize,xshift=\Csize]n.south west)
				-- ([yshift=3*\Csize,xshift=3*\Csize]n.south west)
				-- ([yshift=4*\Csize,xshift=3*\Csize]n.south west)
				-- ([yshift=4*\Csize,xshift=4*\Csize]n.south west)
				-- ([yshift=6*\Csize,xshift=4*\Csize]n.south west);
			\end{tikzpicture} &
			
			\begin{array}{ccccc}
				0&1&2&3\\
				\hline\hline
				1 & 0 & \cdots & \\
				\textcolor{red}{\underline{0}} & 1 & 0 & 1\\
				1 & 0 & 1 & 0\\
				&& & \cdots
			\end{array}\\
			\text{contents}&        (\cdots,1,0,1,0,\textcolor{red}{\underline{0}},1,0,1,1,0,\cdots)&\text{$4$-abacus of $\lambda$}
		\end{array}\]
		\caption{The contents, Maya diagram and $4$-abacus of $\lambda=(4,4,3,1)$. The entry at content $0$ is highlighted.}
		\label{fig:abacus}
	\end{figure}
	
	In terms of the Maya diagram, adding/removing a ribbon of size $k$ corresponds to exchanging a pair of $0$ and $1$ with distance $k$, and the height of the ribbon is the number of $1$'s strictly between these two numbers.
	
	For a ribbon, we call its bottom-leftmost box the \emph{starting box}. We say the ribbon \emph{starts} at $i$ if the starting box has content $i$. In this language, removing a $k$-ribbon starting at $i$ means to exchange $0$ at $a_i$ and $1$ at $a_{i+k}$ in its Maya diagram.
	
	Given $k\in \N$, a $k$-\emph{abacus} of $\lambda$ is a cyclic arrangement of the Maya diagram $\{a_i\}$ in the form
	
	\[\begin{array}{ccccc}
		0&1&\cdots &k-2&k-1\\
		\hline\hline
		a_k & a_{k+1} & \cdots & a_{2k-2}& \cdots\\
		a_{0}&a_{1}&\cdots& a_{k-2}&a_{k-1}\\
		\cdots&a_{-k+1}&\cdots&a_{-2}&a_{-1}
	\end{array}.\]
    
	Note that the abacus should be seen cyclically, and there is no canonical meaning for two numbers on the same row. Nevertheless, we will always let $a_0$ be in the first column in this paper for consistency.

    In the abacus notation, removing a $k$-ribbon corresponds to exchanging $0$ and the $1$ above it. The uniqueness of $\core_k(\lambda)$ then follows --- it is the diagram obtained by moving all $0$'s above all $1$'s in each column of the $k$-abacus.
	
    \subsection{Symmetric functions}
	Let $\Lambda=\bigoplus_{n=0}^\infty \Lambda_n$ be the \emph{ring of symmetric functions} in infinite variables $(x_1,x_2,\dotsc)$ with $\Q$-coefficients, graded by the degree of the functions. This ring has several classic $\Q$-bases, among which we present a few:
	\begin{enumerate}[(1)]
		\item The \emph{monomial symmetric functions}: For each $\lambda\vdash n$, define $m_\lambda$ to be $\sum_{\alpha} x^\alpha$, where $\alpha$ runs through the rearrangements of parts of $\lambda$. 
		\item The \emph{elementary symmetric functions}: Define $e_k$ to be $\sum_{i_1<i_2<\cdots<i_k} x_{i_1}x_{i_2}\cdots x_{i_k}$, and for each $\lambda\vdash n$, define $e_\lambda= e_{\lambda_1}e_{\lambda_2}\cdots e_{\lambda_k}$.
		\item The \emph{homogeneous symmetric functions}: Define $h_k$ to be \\$\sum_{i_1\leq i_2\leq \cdots\leq i_k} x_{i_1}x_{i_2}\cdots x_{i_k}$, and for each $\lambda\vdash n$, define $h_\lambda= h_{\lambda_1}h_{\lambda_2}\cdots h_{\lambda_k}$.
            \item The \emph{power sum symmetric functions}: Define $p_k$ to be $\sum_{i}x_i^k$, and for each $\lambda\vdash n$, define $p_\lambda= p_{\lambda_1}p_{\lambda_2}\cdots p_{\lambda_k}$.
		\item The \emph{Schur symmetric functions}: For each $\lambda\vdash n$, define $s_\lambda$ by the Jacobi-Trudi formula $s_{\lambda}= \det\left(h_{\lambda_i-i+j}\right)_{1\leq i,j\leq \ell(\lambda)}$.
	\end{enumerate}
	By the definition of $e_{\lambda}$ and $h_{\lambda}$, saying that $\{e_\lambda:\lambda \in \Par\}$ and $\{h_\lambda:\lambda\in \Par\}$ are bases is saying that $\Lambda=\Q[e_1,e_2,\dotsc] = \Q[h_1,h_2,\dotsc]$.

	The \emph{skew Schur symmetric functions} are a generalized version of Schur functions, defined for each skew shape $\lambda/\mu$, as 
	\[s_{\lambda/\mu}= \det\left(h_{\lambda_i-i-\mu_j+j}\right)_{1\leq i,j\leq \ell(\lambda)}.\]
	
	There are many intricate relations between these symmetric functions. 
    Let $\mathbf{x}=(x_1,x_2,\dotsc)$ and $\mathbf{y}=(y_1,y_2,\dotsc)$. The RSK correspondence yields
	\[\sum_{\lambda\in \Par}s_\lambda(\mathbf{x})s_\lambda(\mathbf{y})t^{|\lambda|} = \prod_{i=1}^{\infty}\prod_{j=1}^\infty \frac{1}{1-x_iy_jt},\]
	or more generally, \cite[fourth equation on page 70]{macdonald1998symmetric}
	\[\sum_{\lambda\in \Par}s_{\lambda/\mu}(\mathbf{x})s_\lambda(\mathbf{y})t^{|\lambda|} = s_{\mu}(\mathbf{y})t^{|\mu|} \prod_{i=1}^{\infty}\prod_{j=1}^\infty \frac{1}{1-x_iy_jt}.\]
	
	On the other hand, the Cauchy identity provides another expansion formula
	\[\prod_{i=1}^{\infty}\prod_{j=1}^\infty \frac{1}{1-x_iy_jt} = \sum_{\lambda\in \Par} h_\lambda(\mathbf{x})m_{\lambda}(\mathbf{y})t^{|\lambda|}.\]

    There has been much research on multiplying a symmetric function by a Schur function. For example, Pieri rule tells that 
    \[h_n\cdot s_\mu = \sum_{\lambda} s_\lambda,\]
    where $\lambda$ runs through the partitions obtained by $\mu$ adding $n$ boxes with distinct column numbers. Dually, Pieri rule of $e_n$ tells that
    \[e_n\cdot s_\lambda = \sum_{\nu} s_\nu,\]
    where $\nu$ runs through the partitions obtained from $\lambda$ by adding $n$ boxes with distinct row numbers. 
    
    From the perspective of generating functions, we have
    \[E(-t)H(t):= \left(\sum_{n=0}^\infty e_n(\mathbf{x})(-t)^n\right)\left(\sum_{n=0}^\infty h_n(\mathbf{x})t^n\right)
    =\left(\prod_i (1-x_it)\right)\left(\prod_i \frac{1}{1-x_it}\right)=1,\]
    consequently,
    \[s_\mu = \left(\sum_{n=0}^\infty e_n(-t)^n\right)\left(\sum_{n=0}^\infty h_nt^n\right)s_\mu = \sum_\lambda\sum_\nu s_\nu (-1)^{|\nu|-|\lambda|},\]
    where $\lambda$ runs through partitions obtained from $\mu$ by adding any number of distinct-column boxes, and $\nu$ runs through $\lambda$ adding any number of distinct-row boxes.
    
    For a symmetric function $f(x_1,x_2,\dotsc)$, let $(f\circ p_k)= f(x_1^k,x_2^k,\dotsc)$, called the \emph{plethysm} of $f$ and $p_k$. For a more detailed definition of plethysm, see \cite[Sec. 8]{macdonald1998symmetric}.
        
        The generating function of $e_n\circ p_k$ is then \[E_k(t):=\sum_{n=0}^\infty (e_n\circ p_k)t^n = \prod_{i} (1+x_i^kt).\] 
        
        Note that 
        \[\sum_{n=0}^\infty G(k,n) t^n = \prod_{i} (1+x_it+\cdots+x_i^{k-1}t^{k-1}) = \prod_i \frac{1-x_i^kt^k}{1-x_it}= H(t)\cdot E_k(-t^k),\]
        we find $E_k(t)$ and $H(t)$ naturally appear in generating functions of Petrie symmetric functions.

        From this generating function decomposition, the Pieri rule, and the dual Plethystic Pieri rule (shown at \cref{thm:plethysticPieri}), one can obtain the so-called \textit{Petrie Pieri rule} in \cref{thm:refinedJJL} for Petrie symmetric functions.
        
        However, we will not use \cref{thm:plethysticPieri} throughout the paper, as one of our goals is to give an alternative proof of this theorem.

\section{Weighted sum formula of Petrie determinants }\label{sec:WeightSum}
    
    Let $P$ be a $n\times n$ Petrie matrix and $v[a_i,b_i]$, $1\leq i\leq n$ be its rows. Recall our definition of good orientations:

    \begin{definition}[Good orientation]
        An \emph{orientation} of a graph is an assignment of a direction to each edge in the graph, (or a reassignment if the graph itself is directed). For Petrie matrices $P$, an \textit{orientation} of $P$ is an (ordered) set of vectors $v[c_i,d_i]$, where either $(c_i,d_i)=(a_i,b_i)$ or $(c_i,d_i)=(b_i,a_i)$ for each $1\leq i\leq n$.

        An orientation $\sigma$ of $P$ is \emph{good} if $c_1,c_2,\dotsc,c_n$ is a permutation of $0,1,2,\dotsc,n-1$. Namely, in the Petrie graph, every vertex except $n$ has out-degree $1$ in the orientation. Good orientations of $P$ corresponds to good orientations of the Petrie graph of $P$.

        For a good orientation $\sigma$ with corresponding permutation $\tau=c_1c_2\cdots c_{n}$ (one-line notation that maps $i$ to $c_{i+1}$ for $0\leq i\leq n-1$), we denote its \emph{order} $|\sigma|$ to be the number of vectors whose signs are changed. We also assign its \emph{weight} $w(\sigma)$ to be $(-1)^{|\sigma|+\inv(\tau)}$.

    \end{definition}
        We can also act $\tau_\sigma$ to $\sigma$ by permuting the position of vectors in $\sigma$. This sorts the vectors by their first index.
    
        \begin{example}
        \cref{fig:PetrieGraphCycle} shows a Petrie matrix and one of its good orientations 
        \[\sigma= (v[4,0],v[2,4],v[3,6],v[1,5],v[0,1],v[5,2]),\]
        whose vectors form the middle matrix and the corresponding Petrie graph. 
        Its corresponding permutation $\tau_\sigma$ is $\tau_0\tau_1\tau_2\tau_3\tau_4\tau_5=423105$, which has $9$ inversions. 
        
        By acting $\tau_\sigma$ to $\sigma$ itself, we get
        \[\tau_\sigma\cdot \sigma = (v[0,1],v[1,5],v[2,4],v[3,6],v[4,0],v[5,2]).\]
        Its corresponding matrix and Petrie graph is shown in the last column of the figure. 
        The order of $\sigma$ is $2$, and its weight is $(-1)^2(-1)^9=-1$. 
        
        one can also check that the other good orientation of this Petrie matrix has weight $1$.
        \begin{figure}[hbtp!]
		\centering
		\[
        \begin{array}{ccccc}
		\begin{array}{c}
		  a\\b\\c\\d\\e\\f
		\end{array}\overset{\text{\scriptsize 0\quad 1\quad 2\quad 3\quad 4\quad  5\quad  6}}{\left[
			\begin{array}{C{1em}C{1em}C{1em}C{1em}C{1em}C{1em}}
				1&1&1&1&0&0\\
				0&0&1&1&0&0\\
				0&0&0&1&1&1\\
				0&1&1&1&1&0\\
				1&0&0&0&0&0\\
				0&0&1&1&1&0
		\end{array}\right]}
        &
		\implies
        &\begin{array}{c}
		  a\\b\\c\\d\\e\\f
		\end{array}
		\overset{\text{\scriptsize 0\quad 1\quad 2\quad 3\quad 4\quad  5\quad  6}}{\left[
			\begin{array}{C{1em}C{1em}C{1em}C{1em}C{1em}C{1em}}
				$-$&$-$&$-$&$-$&0&0\\
				0&0&$+$&$+$&0&0\\
				0&0&0&$+$&$+$&$+$\\
				0&$+$&$+$&$+$&$+$&0\\
				$+$&0&0&0&0&0\\
				0&0&$-$&$-$&$-$&0
		\end{array}\right]}
        &
		\implies
        &\begin{array}{c}
		  a\\b\\c\\d\\e\\f
		\end{array}
		\overset{\text{\scriptsize 0\quad 1\quad 2\quad 3\quad 4\quad  5\quad  6}}{\left[
			\begin{array}{C{1em}C{1em}C{1em}C{1em}C{1em}C{1em}}
				$+$&0&0&0&0&0\\
				0&$+$&$+$&$+$&$+$&0\\
				0&0&$+$&$+$&0&0\\
				0&0&0&$+$&$+$&$+$\\
				$-$&$-$&$-$&$-$&0&0\\
				0&0&$-$&$-$&$-$&0
		\end{array}\right]}
        \\
        P &&\text{matrix of }\sigma &&Q_\sigma\\
        \begin{tikzpicture}[baseline=-6pt]
			\draw (0:1.7) circle (7pt) node (a0){$0$};
			\draw (360*1/7:1.7) circle (7pt) node (a1){$1$};
			\draw (360*2/7:1.7) circle (7pt) node (a2){$2$};
			\draw (360*3/7:1.7) circle (7pt) node (a3){$3$};
			\draw (360*4/7:1.7) circle (7pt) node (a4){$4$};
			\draw (360*5/7:1.7) circle (7pt) node (a5){$5$};
			\draw (360*6/7:1.7) circle (7pt) node (a6){$6$};
			\draw[->] (a0) -- (a4) node[midway]{$a$};
			\draw[->] (a2) -- (a4) node[pos=0.4]{$b$};
			\draw[->] (a3) -- (a6) node[midway]{$c$};
			\draw[->] (a1) -- (a5) node[pos=0.4]{$d$};
			\draw[->] (a0) -- (a1) node[midway] {$e$};
			\draw[->] (a2) -- (a5) node[pos=0.4]{$f$};
		\end{tikzpicture}
		&\implies &
		\begin{tikzpicture}[baseline=-6pt]
			\draw (0:1.7) circle (7pt) node (a0){$0$};
			\draw (360*1/7:1.7) circle (7pt) node (a1){$1$};
			\draw (360*2/7:1.7) circle (7pt) node (a2){$2$};
			\draw (360*3/7:1.7) circle (7pt) node (a3){$3$};
			\draw (360*4/7:1.7) circle (7pt) node (a4){$4$};
			\draw (360*5/7:1.7) circle (7pt) node (a5){$5$};
			\draw (360*6/7:1.7) circle (7pt) node (a6){$6$};
			\draw[->,red] (a4) -- (a0) node[midway]{$a$};
			\draw[->] (a2) -- (a4) node[pos=0.4]{$b$};
			\draw[->] (a3) -- (a6) node[midway]{$c$};
			\draw[->] (a1) -- (a5) node[pos=0.4]{$d$};
			\draw[->] (a0) -- (a1) node[midway] {$e$};
			\draw[->,red] (a5) -- (a2) node[pos=0.6]{$f$};
		\end{tikzpicture}
		&\implies &
		\begin{tikzpicture}[baseline=-6pt]
			\draw (0:1.7) circle (7pt) node (a0){$0$};
			\draw (360*1/7:1.7) circle (7pt) node (a1){$1$};
			\draw (360*2/7:1.7) circle (7pt) node (a2){$2$};
			\draw (360*3/7:1.7) circle (7pt) node (a3){$3$};
			\draw (360*4/7:1.7) circle (7pt) node (a4){$4$};
			\draw (360*5/7:1.7) circle (7pt) node (a5){$5$};
			\draw (360*6/7:1.7) circle (7pt) node (a6){$6$};
			\draw[->] (a0) -- (a1) node[midway] {$a$};
			\draw[->] (a1) -- (a5) node[pos=0.4]{$b$};
			\draw[->] (a2) -- (a4) node[pos=0.4]{$c$};
			\draw[->] (a3) -- (a6) node[midway]{$d$};
			\draw[->,red] (a4) -- (a0) node[midway]{$e$};
			\draw[->,red] (a5) -- (a2) node[pos=0.6]{$f$};
		\end{tikzpicture}
        \end{array}\]
		\caption{A Petrie matrix, good orientation and its corresponding $Q_\sigma$ in the proof of \cref{thm:PetWeightInvo}. In the matrices, $+$ represents $+1$, and $-$ represents $-1$.}
		\label{fig:PetrieGraphCycle}
	\end{figure}
        \end{example}
        
        Good orientations on the graph can be characterized, whose proof is easy and is omitted:
    \begin{lemma}\label{lemma:graph}
        Let $G$ be a finite pseudograph\footnote{A pesudograph is a graph which allows self-loops and multiedges.} of $n$ vertices and $n-1$ edges. Let $r$ be a fixed vertex of $G$. Let $t_r(G)$ be the number of orientations of $G$ such that all vertices other than $r$ have exactly one out-degree. Then $t_r(G)\neq 0$ if and only if the component of $r$ is a tree and other components have exactly one cycle.\footnote{Such a component is called a \emph{unicyclic graph}.} 

        In this case, every good orientation has edges pointing directed to $r$ or the cycle in the component, and cycles are directed. As a consequence, $t_r(G)=2^{c(G)-1}$, where $c(G)$ is the number of components of $G$.
    \end{lemma}
		Note that \cref{lemma:graph} holds even when our graphs have self-loops or multiedges in the graph. In such case, we take the convention that a self-loop forms a $1$-cycle, double edges form a $2$-cycle, and each of them has two possible orientations.
	
        When there are $2^n$ good orientations, we have $n$ cycles in the graph, each induces an involution on the set of good orientations by reversing orientation of the cycle. We show that each involution is a weight-reversing bijection on the set of good orientations, yielding a counting formula for the determinant of the Petrie matrix.
        \begin{theorem}\label{thm:PetWeightInvo}
            Let $P$ be a Petrie matrix. 

            \begin{enumerate}
                \item If there is a unique good orientation $\sigma$ of $P$, then $\det(P)=w(\sigma)$.
                \item If there are more than one good orientations, let $\varphi$ be an involution induced by a cycle, then $w(\sigma)=-w(\varphi(\sigma))$ for any good orientation $\sigma$ of $P$.
            \end{enumerate}
            In either case,
            \[\det(P)=\sum_{\sigma:\text{good orientation}} w(\sigma).\]
        \end{theorem}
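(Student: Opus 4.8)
The plan is to obtain the displayed ``master formula'' $\det P=\sum_{\sigma}w(\sigma)$ directly from a factorization of $P$, to read off statement~(1) as the one-term case, and to establish the involution identity~(2) by a short parity computation. First I would exploit the telescoping structure of the rows. Writing $\mathbf e_1,\dots,\mathbf e_n$ for the standard basis and $\tilde u_k=\mathbf e_{k+1}+\mathbf e_{k+2}+\dots+\mathbf e_n$ for the suffix sums (so $\tilde u_n=0$), each row satisfies $v[a_i,b_i]=\tilde u_{a_i}-\tilde u_{b_i}$. Hence $P=B\,\tilde U$, where $B$ is the $n\times(n+1)$ signed incidence matrix with entries $B_{ik}=[k=a_i]-[k=b_i]$ (columns indexed by $0,\dots,n$) and $\tilde U$ is the $(n+1)\times n$ matrix with rows $\tilde u_0,\dots,\tilde u_n$.

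Next I would apply the Cauchy--Binet formula to this product. Since the last row $\tilde u_n$ of $\tilde U$ is zero, every $n$-element set of columns except $\{0,1,\dots,n-1\}$ contributes $0$; for the surviving set the corresponding minor of $\tilde U$ is triangular with determinant $1$, so no sign is introduced and $\det P=\det B'$, where $B'$ is $B$ with the column indexed by $n$ deleted. Expanding $\det B'$ by the Leibniz formula, a nonzero term selects for each row (edge) $i$ a column $c_i\in\{a_i,b_i\}\cap\{0,\dots,n-1\}$ so that $c_1,\dots,c_n$ is a permutation of $\{0,\dots,n-1\}$; this is precisely a choice of source for every edge making the sources a permutation, i.e.\ a good orientation $\sigma$ with $\tau=c_1\cdots c_n$. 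The entry $B'_{i,c_i}$ equals $+1$ when $c_i=a_i$ (a forward edge) and $-1$ when $c_i=b_i$ (a reversed edge), so the product of the chosen entries is $(-1)^{|\sigma|}$, while the permutation sign is $(-1)^{\inv(\tau)}$. Each good orientation therefore contributes exactly $w(\sigma)=(-1)^{|\sigma|+\inv(\sigma)}$, yielding the master formula; statement~(1) is then immediate as the case in which the sum has a single term.

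It remains to prove~(2). Let $\varphi$ reverse a cycle $C=u_1u_2\cdots u_\ell u_1$ of the Petrie graph, with edges $f_t=\{u_t,u_{t+1}\}$. Reversing $C$ changes the source of $f_t$ from $u_t$ to $u_{t+1}$. On the size this flips the ``forward/backward'' bit of every one of the $\ell$ cycle edges, so $|\varphi(\sigma)|-|\sigma|\equiv\ell\pmod 2$. On the word $\tau$ it cyclically shifts the source-values occupying the positions of $f_1,\dots,f_\ell$; matching each cycle edge to its matrix row shows $\tau_{\varphi(\sigma)}=\tau_\sigma\circ g$ for a single $\ell$-cycle $g$ on those positions, whence $\inv(\tau_{\varphi(\sigma)})-\inv(\tau_\sigma)\equiv\ell-1\pmod 2$. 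Adding the two contributions gives a net parity change of $\ell+(\ell-1)=2\ell-1\equiv 1\pmod 2$, so $w(\varphi(\sigma))=-w(\sigma)$, as claimed.

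The main obstacle is exactly the bookkeeping in this last step: one must verify that reversing $C$ realizes the source reassignment as one $\ell$-cycle on positions (so that the inversion parity shifts by $\ell-1$ rather than something uncontrolled) and that it flips the backward-bit of each cycle edge (so that the size parity shifts by $\ell$), while handling the degenerate conventions of \cref{lemma:graph} for self-loops ($\ell=1$) and multiedges ($\ell=2$), where a repeated or zero row forces $\pet_k$-type vanishing. With the master formula in hand, part~(1) is automatic and part~(2) is the parity identity above; together with \cref{lemma:graph} they also re-derive the conclusion independently, since when there is more than one good orientation the involutions pair the orientations into classes of opposite weight, matching the ``Hence'' in the statement.
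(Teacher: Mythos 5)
Your proof is correct, but it reaches the ``master formula'' by a genuinely different route than the paper. The paper never expands the determinant directly: for part~(1) it passes from $P$ to the reordered, resigned matrix $Q_\sigma$ (picking up the factor $w(\sigma)$), and then row-reduces $Q_\sigma$ to an upper unitriangular matrix by repeatedly re-attaching leaf edges to the root $n$ using the path-independence relation $v[i,j]+v[j,k]=v[i,k]$; the summation formula is then assembled afterwards from part~(1), part~(2), and the Gordon--Wilkinson tree criterion together with \cref{lemma:graph}. You instead factor $P=B\,\tilde U$ through the signed incidence matrix, kill all but one term of Cauchy--Binet using the zero row $\tilde u_n$, and read the full identity $\det P=\sum_\sigma w(\sigma)$ off the Leibniz expansion of the truncated incidence matrix in one stroke -- this is cleaner in that it yields the ``Hence'' clause without invoking \cref{thm:PetrieDet} or \cref{lemma:graph} at all, and part~(1) becomes the one-term special case rather than a separate computation. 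Your part~(2) coincides with the paper's argument: both compare $\tau_{\varphi(\sigma)}$ with $\tau_\sigma$ via composition with an $\ell$-cycle and combine the parity shifts $\ell$ (size) and $\ell-1$ (inversions). Two small points you flag but should nail down: the reversed cycle cannot pass through the root vertex $n$ (its component is a tree whenever a good orientation exists, by \cref{lemma:graph}), so all sources on the cycle genuinely lie in $\{0,\dots,n-1\}$; and the degenerate cases $\ell=1$ (self-loop: the sign bit still flips even though the zero row is unchanged) and $\ell=2$ (double edge: a transposition of sources) do satisfy the same parity count, so no separate treatment is actually needed beyond the paper's stated convention that these count as $1$- and $2$-cycles with two orientations each.
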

        \begin{proof}
            For each good orientation $\sigma$ with row vectors $v[c_1,d_1],\dotsc,v[c_n,d_n]$ and permutation $\tau$, consider $Q_\sigma$ to be the matrix of $\tau_\sigma\cdot \sigma$, i.e., the $n\times n$ matrix with row vectors $v[c_{\tau(i)},d_{\tau(i)}]$ for $0\leq i\leq n-1$. Then, $Q_\sigma$ is obtained from $P$ by negating $|\sigma|$ rows and permuting the rows by $\tau$. As a result, $\det(P)=w(\sigma)\det(Q_\sigma)$.
            
            If $\mathcal{P}(P)$ is a tree, then there is a unique good orientation $\sigma$. We show that $Q_\sigma$ has determinant $1$.

            We inductively attach the edge of a leaf to the root $n$. For a leaf $\ell$ whose edge is not attached to $n$, there is a directed path from $\ell$ to $n$, say $\ell\to u_1\to u_2\to \cdots\to i_k=n$. Hence, we apply the row operation adding $v[u_{i},u_{i+1}]$ from other rows to row $i$ ($=v[i,u_1]$) and get $v[i,n]$ without changing the determinant. When every vertex is attached to $n$, the resulting matrix $Q$ becomes an upper triangular matrix that has ones on the whole diagonal and upper triangular entries. Hence $\det(Q)=1$ and thus $\det(P)=w(\sigma)$.

            Next, suppose there is a good orientation $\sigma$ with permutation $\tau_\sigma$ and $C$ be a directed cycle in the Petrie graph induced by vectors of $\sigma$. 
            Let $\tau_C$ be the permutation of the cycle and $\varphi$ be the involution reversing the cycle $C$.
            
            Then, $\varphi(\sigma)$ is the good orientation obtained from $\sigma$ by switching $v[c_i,d_i]$ to $v[d_i,c_i]$ if $(c_i,d_i)$ is an edge of $C$. By the definition, we have $\tau_{\varphi(\sigma)} = \tau_C\cdot \tau_\sigma$ and $(-1)^{|\varphi(\sigma)|}=(-1)^{|C|+|\sigma|}$, where $|C|$ is the number of edges in $C$. As $\mathrm{sgn}(\tau_C)=(-1)^{|C|-1}$,
            \[(-1)^{|\varphi(\sigma)|}\mathrm{sgn}(\tau_{\varphi(\sigma)}) = (-1)^{|\sigma|} (-1)^{|C|} \mathrm{sgn}(\tau_C)\mathrm{sgn}(\tau_\sigma)= -(-1)^{|\sigma|}\mathrm{sgn}(\tau_\sigma),\]
            
        \end{proof}
        \begin{figure}[hbtp!]
		\centering
		\[
        \begin{array}{c}
		  a\\b\\c\\d\\e\\f
		\end{array}
		\overset{\text{\scriptsize 0\quad 1\quad 2\quad 3\quad 4\quad  5\quad  6}}{\left[
			\begin{array}{C{1em}C{1em}C{1em}C{1em}C{1em}C{1em}}
				$+$&0&0&0&0&0\\
				0&$+$&$+$&$+$&$+$&0\\
				0&0&$+$&$+$&0&0\\
				\textcolor{gray!50}{0}&\textcolor{gray!50}{0}&\textcolor{gray!50}{0}&\textcolor{gray!50}{$+$}&\textcolor{gray!50}{$+$}&\textcolor{gray!50}{$+$}\\
				$-$&$-$&$-$&$-$&0&0\\
				0&0&$-$&$-$&$-$&0
		\end{array}\right]}
		\implies
        \begin{array}{c}
		  a\\b\\c\\d\\e\\f
		\end{array}
		\overset{\text{\scriptsize 0\quad 1\quad 2\quad 3\quad 4\quad  5\quad  6}}{\left[
			\begin{array}{C{1em}C{1em}C{1em}C{1em}C{1em}C{1em}}
				$-$&0&0&0&0&0\\
				0&$-$&$-$&$-$&$-$&0\\
				0&0&$-$&$-$&0&0\\
                \textcolor{gray!50}{0}&\textcolor{gray!50}{0}&\textcolor{gray!50}{0}&\textcolor{gray!50}{$+$}&\textcolor{gray!50}{$+$}&\textcolor{gray!50}{$+$}\\
				$+$&$+$&$+$&$+$&0&0\\
				0&0&$+$&$+$&$+$&0
		\end{array}\right]}
		\implies
        \begin{array}{c}
		  a\\b\\c\\d\\e\\f
		\end{array}
		\overset{\text{\scriptsize 0\quad 1\quad 2\quad 3\quad 4\quad  5\quad  6}}{\left[
			\begin{array}{C{1em}C{1em}C{1em}C{1em}C{1em}C{1em}}
				$+$&$+$&$+$&$+$&0&0\\
				$-$&0&0&0&0&0\\
				0&0&$+$&$+$&$+$&0\\
				\textcolor{gray!50}{0}&\textcolor{gray!50}{0}&\textcolor{gray!50}{0}&\textcolor{gray!50}{$+$}&\textcolor{gray!50}{$+$}&\textcolor{gray!50}{$+$}\\
				0&0&$-$&$-$&0&0\\
				0&$-$&$-$&$-$&$-$&0
		\end{array}\right]}
		\]
		\[
		\begin{tikzpicture}[baseline=-6pt]
			\draw (0:1.7) circle (7pt) node (a0){$0$};
			\draw (360*1/7:1.7) circle (7pt) node (a1){$1$};
			\draw (360*2/7:1.7) circle (7pt) node (a5){$5$};
			\draw (360*3/7:1.7) circle (7pt) node (a2){$2$};
			\draw (360*4/7:1.7) circle (7pt) node (a4){$4$};
			\draw (360*5/7:1.7) circle (7pt) node (a3){$3$};
			\draw (360*6/7:1.7) circle (7pt) node (a6){$6$};
			\draw[->,red] (a4) -- (a0) node[midway,above]{$e$};
			\draw[->]     (a2) -- (a4) node[midway,right]{$c$};
			\draw[->,gray](a3) -- (a6) node[midway,above]{$d$};
			\draw[->]     (a1) -- (a5) node[midway,below]{$b$};
			\draw[->]     (a0) -- (a1) node[midway,left] {$a$};
			\draw[->,red] (a5) -- (a2) node[midway,below right]{$f$};
		\end{tikzpicture}
            \implies
		\begin{tikzpicture}[baseline=-6pt]
			\draw (0:1.7) circle (7pt) node (a0){$0$};
			\draw (360*1/7:1.7) circle (7pt) node (a1){$1$};
			\draw (360*2/7:1.7) circle (7pt) node (a5){$5$};
			\draw (360*3/7:1.7) circle (7pt) node (a2){$2$};
			\draw (360*4/7:1.7) circle (7pt) node (a4){$4$};
			\draw (360*5/7:1.7) circle (7pt) node (a3){$3$};
			\draw (360*6/7:1.7) circle (7pt) node (a6){$6$};
			\draw[->,red] (a1) -- (a0) node[midway,left] {$a$};
			\draw[->,red] (a5) -- (a1) node[midway,below]{$b$};
			\draw[->,red] (a4) -- (a2) node[midway,right]{$c$};
			\draw[->,gray](a3) -- (a6) node[midway]{$d$};
			\draw[->]     (a0) -- (a4) node[midway,above]{$e$};
			\draw[->]     (a2) -- (a5) node[midway,below]{$f$};
		\end{tikzpicture}
        \implies
        \begin{tikzpicture}[baseline=-6pt]
			\draw (0:1.7) circle (7pt) node (a0){$0$};
			\draw (360*1/7:1.7) circle (7pt) node (a1){$1$};
			\draw (360*2/7:1.7) circle (7pt) node (a5){$5$};
			\draw (360*3/7:1.7) circle (7pt) node (a2){$2$};
			\draw (360*4/7:1.7) circle (7pt) node (a4){$4$};
			\draw (360*5/7:1.7) circle (7pt) node (a3){$3$};
			\draw (360*6/7:1.7) circle (7pt) node (a6){$6$};
			\draw[->,red] (a1) -- (a0) node[midway,left] {$b$};
			\draw[->,red] (a5) -- (a1) node[midway,below]{$f$};
			\draw[->,red] (a4) -- (a2) node[midway,right]{$e$};
			\draw[->,gray] (a3) -- (a6) node[midway]{$d$};
			\draw[->] (a0) -- (a4) node[midway,above]{$a$};
			\draw[->] (a2) -- (a5) node[midway,below]{$c$};
		\end{tikzpicture}\]
		\caption{From one $Q_\sigma$ to $Q_{\varphi(\sigma)}$, we revert all edges on the cycle $(01524)$, and then apply the cyclic permutation on the rows of the matrix. Edges that point to smaller numbers are highlighted.}
		\label{fig:ReversePetrieCycle}
	\end{figure}
        \begin{example}
            \cref{fig:ReversePetrieCycle} shows two possible good orientations of
            \[\sigma= (v[4,0],v[2,4],v[3,6],v[1,5],v[0,1],v[5,2]).\]
            One can reach another by reversing the cycle of edges $0,1,5,2,4$ on the graph, which has row numbers $5,3,6,2,1$. The starting points of two $\tau$s differ by a cycle permutation $(01524)$. Hence, their weights differ by a factor of $(-1)^5\cdot (-1)^{5-1}=-1$.
        \end{example}

\section{\texorpdfstring{Petrie Graphs of $\Pet_k(\lambda,\mu)$}{Petrie Graphs of Pet(λ,μ)}}\label{sec:Graph}
	
	In this section, we introduce the Petrie graphs of $\Pet_k(\lambda,\mu)$ and show that this graph can be read directly from $k$-abaci of $\lambda$ and $\mu$. We first focus on the case $\mu = \varnothing$ as an introductory example.
    
    If $\mu=\varnothing$, every row in the Petrie matrix has $k$ $1$'s if the ones do not start/end at the border. For this reason, we use any label $\leq 0$ to refer to the first vertex $0$, and any label $\geq \ell(\lambda)$ for the last vertex $\ell(\lambda)$. Under this convention, an edge always connects some vertices of labels $i$ and $i+k$. By checking the indices, we have the following proposition:
    \begin{proposition} \label{prop:corr1}
		Let $\lambda$ be a partition, $k\in \N$. Let $\{a_j\}$ be the Maya diagram of $\lambda$. The data of Petrie graph $\mathcal{P}(\Pet_k(\lambda,\varnothing))$ can be read from the Maya diagram:
		\begin{enumerate}
			\item For $1-k\leq j< \ell(\lambda)$, $a_{-j}=1$ if and only if there is an edge from vertex $j$ to $j+k$.
			\item $\lambda_1\geq k$ if and only if there is a self-loop at the first vertex $0$.
		\end{enumerate}
	\end{proposition}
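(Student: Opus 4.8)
The plan is to decode directly what the rows of $\Pet_k(\lambda,\varnothing)$ look like and translate the edge data into the Maya diagram. Recall that the $(i,j)$ entry is $[0 \le \lambda_i - i + j < k]$, since $\mu = \varnothing$ means every $\mu_j = 0$. Fixing the row index $i$, the entry is $1$ precisely when $i - \lambda_i \le j < i - \lambda_i + k$, so row $i$ has the form $v[c_i, c_i + k]$ with $c_i = i - \lambda_i - 1$ (reading off the zero-before and one-onward convention of $v[c,d]$, where the ones occupy columns $c+1, \dots, d$). Thus every row contributes an edge joining vertices $c_i$ and $c_i + k$, confirming that edges always span a distance $k$ as the preamble to the proposition asserts, subject to the clamping convention that labels $\le 0$ collapse to the vertex $0$ and labels $\ge \ell(\lambda)$ collapse to $\ell(\lambda)$.

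Next I would match these edge endpoints against the Maya diagram. The Maya diagram is defined by $a_m = 1$ iff $m = \lambda_j - j + 1$ for some $j \ge 1$. So $a_{-j} = 1$ means $-j = \lambda_i - i + 1$ for some row $i$, i.e. $i - \lambda_i - 1 = j$, which is exactly the starting label $c_i = j$ computed above. Hence for each $j$ in the stated range $1 - k \le j < \ell(\lambda)$, the condition $a_{-j} = 1$ is equivalent to the existence of a row starting its block of ones at column $j$, which is equivalent to an edge from $j$ to $j + k$. This gives part (1) essentially by unwinding the two definitions in parallel; the range restriction $1 - k \le j < \ell(\lambda)$ is what guarantees we stay within genuine (non-clamped) interior vertices so the correspondence is a clean bijection between ones of the Maya diagram and edges of the graph.

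For part (2), I would observe that $\lambda_1 \ge k$ forces the first row $v[c_1, c_1 + k]$ to have $c_1 = 1 - \lambda_1 - 1 = -\lambda_1 \le -k + 1 < 0$ while $c_1 + k \le 1 \le$ the clamped interpretation, so both endpoints of the first edge fall into the clamped range $\le 0$; under the convention that all such labels refer to the single first vertex, this edge becomes a self-loop at that vertex. Conversely, a self-loop at the first vertex can only arise when both endpoints $c_1$ and $c_1 + k$ are $\le 0$, which means $\lambda_1 \ge k$. I expect the main obstacle to be bookkeeping the boundary: making the clamping convention precise enough that one verifies exactly which edges get identified with vertex $0$ or vertex $\ell(\lambda)$, and checking that the interior bijection of part (1) is not disturbed by these identifications. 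Once the index arithmetic $c_i = i - \lambda_i - 1$ and its Maya translation $a_{-c_i} = 1$ are pinned down, both statements follow by direct verification.
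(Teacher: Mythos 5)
Your argument is correct and follows essentially the same route as the paper: compute that row $i$ of $\Pet_k(\lambda,\varnothing)$ is $v[i-\lambda_i-1,\, i-\lambda_i-1+k]$, match the starting label against the Maya diagram condition $a_{-j}=1 \iff j=i-\lambda_i-1$, and handle the self-loop case by noting that the boundary condition $-\lambda_1+k\le 0$ is equivalent to $\lambda_1\ge k$ (your ``$c_1+k\le 1$'' should read $c_1+k\le 0$, but the intended inequality is clear and the conclusion is unaffected).
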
 
	\begin{proof}\mbox{}
		\begin{enumerate}
			\item Recall that $(\Pet_k(\lambda,\varnothing))_{ij}=1$ if $0\leq \lambda_i-i+j<k$, i.e. $j\in [i-\lambda_i, k-1+i-\lambda_i]$. Hence, the $i$-th row of $\Pet_k(\lambda)$ is $v[i-\lambda_i-1,k-1+i-\lambda_i]$ under the prescribed convention.
			
			In the Maya diagram, $a_{-j}=1$ if and only if there is $i$ such that $-j=\lambda_i-i+1$, i.e. $j=i-\lambda_i-1$. The result follows.
			\item There is a loop at the first vertex if and only if $a_{j}=1$ for some $j\geq k$. In this case, we assume $j$ to be the largest index with $a_j=1$. Then we have $j=\lambda_1-1+1$, which means $\lambda_1\geq j\geq k$. The converse is also clear.
		\end{enumerate}
	\end{proof}
	\ytableausetup{smalltableaux}
	\begin{figure}[htbp!]
		\centering
		\[\begin{array}{cccc}
			\ydiagram{4,4,4,3,2,2,2,1,1} 
			& {}\hspace{1em}\scalebox{0.75}{$\begin{bmatrix}
					1& & & & & & & & \\
					1&1& & & & & & & \\
					1&1&1& & & & & & \\
					1&1&1&1&1& & & & \\
					& &1&1&1&1&1& & \\
					& & &1&1&1&1&1& \\
					& & & &1&1&1&1&1\\
					& & & & & &1&1&1\\
					& & & & & & &1&1
				\end{bmatrix}$}\hspace{1em}{}
			&
			\begin{tikzpicture}[baseline=-.73cm, scale=0.9,x=.65cm,y=0.83cm]
				\draw (0.5,0)-- (5.5,0) node [midway,above]{$\leq 0$};
				\draw (0.5,-3) -- (0.9,-3)--(0.9,-2)--(1.1,-2)--(1.1,-3)--(1.9,-3)--(1.9,-2)--(2.1,-2)--(2.1,-3)--(5.5, -3);
				\node at (3,-3) [below]{$\geq 9$};
				\draw (5,-1) circle (7pt) node[inner sep=3pt] (1){$1$};
				\draw (4,-1) circle (7pt) node[inner sep=3pt] (2){$2$};
				\draw (3,-1) circle (7pt) node[inner sep=3pt] (3){$3$};
				\draw (2,-1) circle (7pt) node[inner sep=3pt] (4){$4$};
				\draw (1,-1) circle (7pt) node[inner sep=3pt] (5){$5$};
				\draw (5,-2) circle (7pt) node[inner sep=3pt] (6){$6$};
				\draw (4,-2) circle (7pt) node[inner sep=3pt] (7){$7$};
				\draw (3,-2) circle (7pt) node[inner sep=3pt] (8){$8$};
				\draw[->] (5,0) -- (1);
				\draw[->] (4,0) -- (2);
				\draw[->] (3,0) -- (3);
				\draw[->] (1,0) -- (5);
				\draw[->] (2) -- (7);
				\draw[->] (3) -- (8);
				\draw[->] (4) -- (2,-2);
				\draw[->] (6) -- (5,-3);
				\draw[->] (7) -- (4,-3);
			\end{tikzpicture}
			& 
            \hspace{10pt}
			\renewcommand{\arraystretch}{1.8}
			\begin{array}{ccccc}
				0&1&2&3&4\\
				\hline\hline
				\underline{1} & 0 & 1 & 1&1\\
				0 & 1 & 1 & 1& 0\\
				\cdots&1& 0 & 1&1\\\hphantom{\cdots}&\hphantom{\cdots}&\hphantom{\cdots}&\hphantom{\cdots}&\hphantom{\cdots}
			\end{array}
			\renewcommand{\arraystretch}{1}
			\\
			\lambda & \Pet_5(\lambda,\varnothing) & \mathcal{P}(\Pet_5(\lambda,\varnothing)) &\hspace{8pt}\text{$5$-abacus of $\lambda$}
		\end{array}\]
		\caption{The diagram, $\Pet_5(\lambda,\varnothing)$, $\mathcal{P}(\Pet_5(\lambda,\varnothing))$ and $5$-abacus for $\lambda=(444322211)$.}
		\label{fig:simpleCase}
	\end{figure}
	\begin{example}
		Let $k=5$ and $\lambda= 444322211$. The corresponding objects for $\lambda$ are shown in \cref{fig:simpleCase}. 
		The top vertex $\leq 0$ and the bottom vertex $\geq 9$ are flattened, for better visualization.
		\cref{prop:corr1} tells that the pattern of vertical edges in $\mathcal{P}_5(\lambda,\varnothing)$ perfectly matches the pattern of ones in the $5$-abacus of $\lambda$. Here, $\mathcal{P}_5(\lambda,\varnothing)$ is a tree, and the congruence class $x\equiv 2\pmod 5$ links the top vertex and the bottom vertex. Indeed, $\pet_5(\lambda)=-1\neq 0$.
	\end{example}
    
	For general $\lambda,\mu$, the idea is to see $\Pet_k(\lambda,\mu)$ as a submatrix of an $\ell(\lambda) \times (\ell(\lambda)+\mu_1)$ matrix
	\[M_{\lambda,\mu}:=([0\leq \lambda_i-i+j < k])_{\substack{1\leq i\leq \ell(\lambda),\\ -\mu_1+1\leq j\leq \ell(\lambda)}},\]
	from which we obtain $\Pet_k(\lambda,\mu)$ by removing columns whose indices are not of the form $-\mu_j+j$; see \cref{fig:PetSkew}. Here, we label each column by the value $j$ instead of natural numbers to best align with the case $\mu=\varnothing$. 
	
	Removing column $i$ from a Petrie matrix corresponds to merging $i$ and $i-1$ in its Petrie graph. Instead of doing the mergings precisely, we add horizontal edges to mark the merging, and call this new graph $\mathcal{P}'(\Pet_k(\lambda,\mu))$. $\mathcal{P}(\Pet_k(\lambda,\mu))$ is then obtained from $\mathcal{P}'(\Pet_k(\lambda,\mu))$ by contracting these horizontal edges.
    
    \ytableausetup{smalltableaux}
    \setcounter{MaxMatrixCols}{15}
	\begin{figure}[htbp!]
		\centering
		
		\[\begin{array}{ccc}
			\ydiagram{4+2,2+4,2+4,2+4,2+3,1+4,1+3,1+3,2} 
			& \scalebox{0.75}{$\overset{\begin{tikzpicture}[x=12.5pt,xshift=-8pt]
            \node at (-12.1,0) {-3}; 
            \node at (-11.1,0) {-2}; 
            \node at (-10.1,0) {-1}; 
            \node at (-9,0) { 0}; 
            \node at (-8,0) { 1}; 
            \node at (-7,0) { 2}; 
            \node at (-6,0) { 3}; 
            \node at (-5,0) { 4}; 
            \node at (-4,0) { 5}; 
            \node at (-3,0) { 6}; 
            \node at (-2,0) { 7}; 
            \node at (-1,0) { 8}; 
            \node at ( 0,0) { 9}; 
            \end{tikzpicture}}{\begin{bmatrix}
						\,1\,&\,\textcolor{red}{1}\,&\,\textcolor{red}{1}\,& & & & & & & & \\
						\,1\,&\,\textcolor{red}{1}\,&\,\textcolor{red}{1}\,&\,1\,& & & & & & & \\
						\,1\,&\,\textcolor{red}{1}\,&\,\textcolor{red}{1}\,&\,1\,&\,1\,& & & & \\
						&\,\textcolor{red}{1}\,&\,\textcolor{red}{1}\,&\,1\,&\,1\,&\,1\,& & & \\
						& & &\,1\,&\,1\,&\,1\,&\,1\,&\,\textcolor{red}{1}\,& \\
						& & & &\,1\,&\,1\,&\,1\,&\,\textcolor{red}{1}\,&\,1\,\\
						& & & & & &\,1\,&\,\textcolor{red}{1}\,&\,1\,&\,1\,&\,1\,\\
						& & & & & & &\,\textcolor{red}{1}\,&\,1\,&\,1\,&\,1\,&\,\textcolor{red}{1}\,&\\
						& & & & & & & & & &\,1\,&\,\textcolor{red}{1}\,&\,1\,\\
				\end{bmatrix}}$}
			& \scalebox{0.75}{$\overset{\begin{tikzpicture}[scale=.9, x=13.3pt,y=8pt, transform shape]
            \node at (-13.5,0) {-4}; 
            \node at (-12.3,0) {-3}; 
            \node at (-11.7,1) {-2}; 
            \node at (-11.1,0) {-1}; 
            \node at (-9.8,0) { 0}; 
            \node at (-8.8,0) { 1}; 
            \node at (-7.8,0) { 2}; 
            \node at (-6.65,0) { 3}; 
            \node at (-6.25,0) { 4}; 
            \node at (-5.1,0) { 5}; 
            \node at (-4.1,0) { 6}; 
            \node at (-2.95,0) { 7}; 
            \node at (-2.55,0) { 8}; 
            \node at (-1.4,0) { 9}; 
						\draw[color=white] (-1,0) circle (2pt); 
            \end{tikzpicture}}{\begin{bmatrix}
						\,1\,\quad~& & & & & & & & \\
						\,1\,\quad~&~\,1\,& & & & & & & \\
						\,1\,\quad~&~\,1\,&\,1\,& & & & \\
						&~\,1\,&\,1\,&\,1\,& & & \\
						&~\,1\,&\,1\,&\,1\,&\,1\,~& \\
						&  &\,1\,&\,1\,&\,1\,~&~\,1\,\\
						&  & & &\,1\,~&~\,1\,&\,1\,&\,1\,~\\
						&  & & &      &~\,1\,&\,1\,&\,1\,~&~\,1\,\\
						&  & & &      &      &     &\,1\,~&~\,1\,\\
				\end{bmatrix}}$}
			\\
			{\lambda/\mu} & M_{\lambda,\mu} \text{ and columns to be removed}
			&
			\Pet_5(\lambda,\mu)
			\\
			\begin{tikzpicture}[baseline=.39cm, scale=0.9,y=0.85cm]
				\draw [red] (1,.7)-- (4.5,.7) -- (4.5, -0.2) -- (5, -0.2);
				\node at (3,.6) [above]{$\leq -4$};
				
                \draw (1,-2.7) -- (1.3,-2.7)--(1.3,-1.7)--(1.5,-1.7)--(1.5,-2.7)--(2.1,-2.7)--(2.1,-1.7)--(2.3,-1.7)--(2.3,-2.7)--(5, -2.7);
				\node at (3,-2.6) [below]{$\geq 9$};
				
				\draw (3.8, 0) circle (6pt) node[inner sep=2pt] (-3){\scriptsize -3};
				\draw (3, 0) circle (6pt) node[inner sep=2pt] (-2){\scriptsize -2};
				\draw (2.2, 0) circle (6pt) node[inner sep=2pt] (-1){\scriptsize -1};
				\draw (1.4, 0) circle (6pt) node[inner sep=2pt] (0){$0$};
				\draw (4.6,-1) circle (6pt) node[inner sep=2pt] (1){$1$};
				\draw (3.8,-1) circle (6pt) node[inner sep=2pt] (2){$2$};
				\draw (3  ,-1) circle (6pt) node[inner sep=2pt] (3){$3$};
				\draw (2.2,-1) circle (6pt) node[inner sep=2pt] (4){$4$};
				\draw (1.4,-1) circle (6pt) node[inner sep=2pt] (5){$5$};
				\draw (4.6,-2) circle (6pt) node[inner sep=2pt] (6){$6$};
				\draw (3.8,-2) circle (6pt) node[inner sep=2pt] (7){$7$};
				\draw (3  ,-2) circle (6pt) node[inner sep=2pt] (8){$8$};
				\draw[red,-] (-3) -- (-2);
				\draw[red,-] (-2) -- (-1);
				\draw[red,-] (3) -- (4);
				\draw[red,-] (7) -- (8);
				\draw[->] (2.2, .7) -- (-1);
				\draw[->] (1.4, .7) -- (0);
				\draw[->] (4.6,-.2) -- (1);
				\draw[->] (-3) -- (2);
				\draw[->] (-1) -- (4);
				\draw[->] (0) -- (5);
				\draw[->] (2) -- (7);
				\draw[->] (3) -- (8);
				\draw[->] (6) -- (4.6,-2.7);
			\end{tikzpicture}
			& 
			\renewcommand{\arraystretch}{1.8}
			\begin{array}{ccccc}
				\hphantom{\cdots\cdot}&\hphantom{\cdots\cdot}&\hphantom{\cdots\cdot}&\hphantom{\cdots\cdot}&\hphantom{\cdots\cdot}\\
				0&1&2&3&4\\
				\hline\hline
				1 & 1 & 0 & 0 & \cdots \\
				\underline{1} & 1 & 0 & 1&1\\
				0 & 0 & 1 & 1& 0\\
				\cdots&1& 0 & 0&1
			\end{array}
			\renewcommand{\arraystretch}{1}
			&
			\renewcommand{\arraystretch}{1.8}
			\begin{array}{ccccc}
				\hphantom{\cdots\cdot}&\hphantom{\cdots\cdot}&\hphantom{\cdots\cdot}&\hphantom{\cdots\cdot}&\hphantom{\cdots\cdot}\\
				0&1&2&3&4\\
				\hline\hline
				\textcolor{red}{0} & \textcolor{red}{0} & \textcolor{red}{0} & \textcolor{red}{0} & \cdots \\
				\underline{1} & 1 & \textcolor{red}{0} & \textcolor{red}{0}&1\\
				1 & 1 & \textcolor{red}{0} & 1& 1\\
				\cdots&1& 1 & \textcolor{red}{0}&1
			\end{array}
			\renewcommand{\arraystretch}{1}
			\\
			\mathcal{P}'(\Pet_5(\nu,\mu)) & \text{$5$-abacus of $\lambda$} & \text{$5$-abacus of $\mu$}
		\end{array}\]
		\caption{The skew shape, $\Pet_5(\lambda,\mu)$, $\mathcal{P}'(\Pet_5(\nu,\mu))$ and $5$-abaci for $\lambda=666655442,\mu=42222111$. Horizontal edges are shown in red. Note that in the first matrix, we mark the column numbers, whereas in the second matrix, we mark the vertices of the Petrie graph.}
		\label{fig:PetSkew}
	\end{figure}
        
        To differentiate the vertices of $\mathcal{P}(\Pet_k(\lambda,\mu))$ and $\mathcal{P}'(\Pet_k(\lambda,\mu))$, we give them different names:
            
	\begin{definition}[cell/cluster]
            We call the vertex $\leq -\mu_1$ the \emph{top vertex} and $\geq \lambda_1$ the \emph{bottom vertex}.
            
            Fix partitions $\lambda,\mu$ and $k\in \N$. We call it a \emph{cell} for a non-bottom vertex in $\mathcal{P}'(\Pet_k(\lambda,\mu))$, and a \emph{cluster} for a non-bottom vertex in $\mathcal{P}(\Pet_k(\lambda,\mu))$.
            
            Given a cell $c$, the cluster of $c$ is the cluster that $c$ merges into.
	\end{definition}
         
        We use $\mathcal{P}'(\Pet_k(\lambda,\mu))$ more because it is easier to read and draw from the $k$-abaci of $\lambda$ and $\mu$. 
	\begin{proposition}\label{prop:PetrieAbacus}
		Given partitions $\lambda,\mu$ and $k\in \N$. Let $(a_i)_{i=-\infty}^\infty$, $(b_i)_{i=-\infty}^{\infty}$ be the Maya diagrams of $\lambda,\mu$, respectively. Then in $\mathcal{P}'(\Pet_k(\lambda,\mu))$,
		\begin{enumerate}
			\item For each $j< \ell(\lambda)$, the cells $j$ and $j+k$ are adjacent if and only if $a_{-j}=1$.
			\item For each $i< \ell(\lambda)$, the cells $i$ and $i+1$ are merged if and only if $b_{-i}=0$.
		\end{enumerate}
	\end{proposition}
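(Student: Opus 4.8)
The plan is to prove both statements by tracing definitions directly through the construction of $\mathcal{P}'_k(\lambda,\mu)$, treating the two parts independently: part (1) concerns the edges coming from the rows of $M_{\lambda,\mu}$ and is governed by $\lambda$, while part (2) concerns which columns are deleted and is governed by $\mu$. Throughout I identify the cells of $\mathcal{P}'_k(\lambda,\mu)$ with the column-boundaries of $M_{\lambda,\mu}$, so that the cell labelled $i$ sits between columns $i$ and $i+1$, matching the convention already used for $\mu=\varnothing$ in \cref{prop:corr1}.

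For part (1), I first determine the support of each row of $M_{\lambda,\mu}$. By definition the $(i,j)$ entry is $1$ exactly when $0\leq \lambda_i-i+j<k$, i.e.\ when $i-\lambda_i\leq j\leq i-\lambda_i+k-1$; hence row $i$ has its (at most) $k$ consecutive ones in columns $i-\lambda_i,\dots,i-\lambda_i+k-1$, and as a vector it equals $v[i-\lambda_i-1,\; i-\lambda_i-1+k]$. Thus row $i$ contributes precisely the edge joining cell $i-\lambda_i-1$ to cell $(i-\lambda_i-1)+k$. Consequently cells $p$ and $p+k$ are adjacent if and only if some row index $i\geq 1$ satisfies $i-\lambda_i-1=p$, that is $\lambda_i-i+1=-p$ for some $i$. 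By the defining formula of the Maya diagram this is exactly the condition $a_{-p}=1$, which is the assertion with $p$ in place of $i$. This is the same computation as in \cref{prop:corr1}, now carried out inside the wider matrix $M_{\lambda,\mu}$.

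For part (2), I recall that $\Pet_k(\lambda,\mu)$ is obtained from $M_{\lambda,\mu}$ by keeping only the columns whose index is of the form $-\mu_l+l$, and that deleting a column merges the two cells on its two sides; concretely, the column lying between cells $i$ and $i+1$ is the column with index $i+1$, and its deletion is recorded by the horizontal edge joining $i$ and $i+1$. Hence cells $i$ and $i+1$ are merged if and only if the index $i+1$ is \emph{not} of the form $-\mu_l+l$ for any $l\geq 1$. Rewriting $i+1=-\mu_l+l$ as $\mu_l-l+1=-i$ and invoking the Maya diagram of $\mu$, the index $i+1$ has this form precisely when $b_{-i}=1$; negating, cells $i$ and $i+1$ are merged exactly when $b_{-i}=0$, as claimed.

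The one genuinely delicate point is the behaviour at the two boundary cells: when the band of ones in row $i$ runs past column $-\mu_1+1$ on the left or past column $\ell(\lambda)$ on the right, the corresponding edge endpoint is truncated to the top cell $\leq-\mu_1$ or the bottom cell $\geq\ell(\lambda)$, and the monotonicity of $\lambda_i-i$ (in $i$) and of $\mu_l-l$ (in $l$) must be used to check that these truncations remain consistent with reading off $a_{-i}$ and $b_{-i}$ at the boundary, and that the stated range $i\leq\ell(\lambda)$ in part (2) captures exactly the relevant horizontal edges. I expect this boundary bookkeeping, rather than the interior correspondence, to be the only step requiring care; once the top/bottom identifications are fixed as in the $\mu=\varnothing$ case, both equivalences extend verbatim.
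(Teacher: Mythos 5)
Your proof is correct and follows essentially the same route as the paper: computing that row $i$ of $M_{\lambda,\mu}$ is $v[i-\lambda_i-1,\,i-\lambda_i-1+k]$ and matching this against the Maya diagram of $\lambda$ for part (1), and identifying the deleted columns with indices not of the form $-\mu_l+l$, hence with $b_{-i}=0$, for part (2). Your closing remark on the boundary truncations is a reasonable extra caution that the paper's own proof does not spell out either, and it does not affect the core argument.
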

	\begin{proof}
            For the first case, we check that the $i$-th row of $M_{\lambda,\mu}$ is $v[i-\lambda_i-1, i-\lambda_i-1+k]$ since $a_{-j}=1$ if $-j=\lambda_i-i+1$ for some $i$. Indeed, $0\leq \lambda_i-i+j\leq k-1$ iff $j\in [i-\lambda_i,\, i-\lambda_i-1+k]$.

            For the second one, as vertices $i$ and $i+1$ are merged if $i+1$ is not in the form $-\mu_j+j$, it means $-i$ is not of the form $\mu_j-j+1$, i.e. $b_{-i}=0$.
	\end{proof}

    The proposition suggests that the top vertex is in fact formed by gluing all vertices before $-\mu_1$, whereas the bottom vertex should be understood as $k$ vertically glued vertices that are glued together at the very end of the abacus. This also justifies why we exclude the bottom vertex from our definition of cells.
\section{Removing ribbons and Proof of \texorpdfstring{\cref{thm:refinedJJL}}{Theorem 1.4}}\label{sec:Ribbon}

	The fact that $\mathcal{P}'(\Pet_k(\lambda,\mu))$ can be read off from $k$-abaci indicates the close involvement of $k$-ribbon calculus. In this section, we elaborate on how removing/adding ribbon(s) on $\lambda/\mu$ corresponds to actions on Petrie graphs. Recall that we say a ribbon starts at $i$ if its starting box, the bottom-leftmost box, has content $i$.
	
	\begin{proposition} \label{prop:removingHook}
		Let $\lambda/\mu$ be a skew shape and $\gamma=\lambda/\nu$ be a $k$-ribbon of $\lambda/\mu$ with height $q$ which starts at $-i$. Then removing $\gamma$ corresponds to
		\begin{enumerate}[(i)]
			\item Young diagram: Changing $\lambda/\mu$ to $\nu/\mu$. The height $q$ is one less than the number of affected rows.
			\item Maya diagram: Changing $a_{-i}$ to $1$ and $a_{-i+k}$ to $0$. The height $q$ is the number of ones whose indices are strictly between $-i$ and $-i+k$.
			\item Abacus of $\lambda$: Exchanging the $0$ at the index $-i$ and the $1$ above it. 
			\item $\mathcal{P}(\Pet_k(\lambda,\mu))$: Moving the edge $(i-k,i)$ down to $(i,i+k)$. The height $q$ is the number of edges $(j,j+k)$ with $j$ between $i-k$ and $i$. 
			\item $\Pet_k(\lambda,\mu)$: Exchanging the row having $1$'s at indices $i-k+1$ to $i$ (in $M_{\lambda,\mu}$) to a row with $1$'s at indices $i+1$ to $i+k$, then move this row $q$ rows below.
		\end{enumerate}
	\end{proposition}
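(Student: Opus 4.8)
The plan is to treat statement~(ii) as the anchor and obtain the remaining four items as successive translations through the dictionary assembled in \cref{sec:Symbols,sec:Graph}. Statement~(i) requires nothing beyond the definitions: removing $\gamma$ replaces $\lambda/\mu$ by $\nu/\mu$, and $\htt(\gamma)=q$ is by definition one less than the number of rows $\gamma$ occupies. For~(ii), I would recall from \cref{sec:Symbols} that in the Maya diagram removing a $k$-ribbon is a swap of a $0$--$1$ pair at distance $k$, and that a ribbon starting at content $c$ is removed by exchanging $a_c$ with $a_{c+k}$. Since $\gamma$ starts at $-i$, the relevant swap is of $a_{-i}$ and $a_{-i+k}$; as $\gamma$ is present in $\lambda$ we have $a_{-i}=0$, $a_{-i+k}=1$ there, and the swap produces $a_{-i}=1$, $a_{-i+k}=0$. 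The same recalled fact identifies $q$ with the number of $1$'s at Maya indices strictly between $-i$ and $-i+k$.

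From~(ii) the abacus statement~(iii) is immediate: the $k$-abacus stacks $\{a_\ell\}$ by residue with $a_{-i+k}$ sitting directly above $a_{-i}$, so the swap is exactly exchanging the $0$ at index $-i$ with the $1$ immediately above it, which is the abacus description of ribbon removal recorded in \cref{sec:Symbols}. For the graph statement~(iv) I would apply \cref{prop:PetrieAbacus}, by which cells $j$ and $j+k$ are adjacent precisely when $a_{-j}=1$. Reading this before and after the swap: in $\lambda$ the edge $(i-k,i)$ is present (since $a_{-(i-k)}=a_{-i+k}=1$) while $(i,i+k)$ is absent (since $a_{-i}=0$), and after removal these two conditions reverse, so the edge slides from $(i-k,i)$ to $(i,i+k)$. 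Reindexing the $1$'s strictly between $-i$ and $-i+k$ by $\ell=-j$ turns them into the edges $(j,j+k)$ with $i-k<j<i$, giving the stated count of $q$; since contracting the horizontal edges does not disturb these adjacencies, the description passes to $\mathcal{P}_k(\lambda,\mu)$ as well.

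The genuine bookkeeping lies in the matrix statement~(v). Each bead $a_{-j}=1$ of $\lambda$ contributes the row $v[j,j+k]$ of $M_{\lambda,\mu}$ (under the border conventions of \cref{sec:Graph}), and the rows are listed in the order of the partition index $i$, which by the strict decrease of $\lambda_i-i$ is the order of increasing row-start $j$. The swap of~(ii) carries the bead of Maya index $-i+k$ to $-i$, i.e.\ replaces the row $v[i-k,i]$ (ones at columns $i-k+1,\dots,i$) by $v[i,i+k]$ (ones at columns $i+1,\dots,i+k$); its row-start increases from $i-k$ to $i$, so to restore increasing order the row must slide down past exactly those rows whose start lies strictly between, which by~(iv) are the $q$ beads counted there. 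This is the ``move $q$ rows below'' clause, and the resulting cyclic permutation of $q+1$ rows contributes the sign $(-1)^q$ compatible with the height.

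The step I expect to fight with is not any single computation but the simultaneous upkeep of conventions: the content-to-vertex shift $a_{-j}\leftrightarrow(j,j+k)$, the deletion of the $\mu$-columns from $M_{\lambda,\mu}$, and the floor-versus-ceiling choice flagged in the remark after \cref{thm:PetrieDet}. The main risk is an off-by-one error in~(v), where the row displacement must come out to exactly $q$ and the induced sign to exactly $(-1)^q$; once these conventions are pinned down, every item collapses to the single Maya-diagram swap of~(ii).
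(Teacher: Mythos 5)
Your proposal is correct and follows essentially the same route as the paper: items (i)--(iii) are read off from the Maya-diagram/abacus dictionary of \cref{sec:Symbols}, item (iv) is obtained from \cref{prop:PetrieAbacus}, and item (v) is the observation that the replaced row's starting column jumps from $i-k$ to $i$, so restoring the row ordering moves it past exactly the $q$ rows whose starts lie strictly in between. The paper's own proof is just a terser version of the same argument; your added bookkeeping (and the remark on the induced sign $(-1)^q$) is consistent with it.
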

	\begin{proof}
		The correspondences of the first three are direct, and the correspondence between part (iii) and (iv) is translated by \cref{prop:PetrieAbacus}.
		
		For $\Pet_k(\lambda,\mu)$, the corresponding action is to change the row and move the row to the place to order the rows. The number of rows moved below is the number of edges with a starting point between $i-k$ and $i$, which is $q$ by part (iv).
    \end{proof}
    
    As $\Pet_k(\lambda,\mu)$ is a Petrie matrix and thus \cref{thm:PetWeightInvo} applies, here we translate the definition of the weight of a good orientation using the setting of $\Pet_k(\lambda,\mu)$.

    \begin{definition}[Basic graph, $k$-decomposition]
        Let $\mu\subseteq \nu\subseteq \lambda$ be partitions. 
        
        A Petrie graph $\mathcal{P}(\Pet_k(\nu,\mu))$ (or $\mathcal{P}'(\Pet_k(\nu,\mu))$) is  \emph{basic} if each cluster has exactly one downward edge. That is, the usual small-to-large orientation of the Petrie graph $\mathcal{P}(\Pet_k(\nu,\mu))$ is good.
		
        A \textit{$k$-decomposition} of $\lambda$ to $\nu$ is a sequence of partitions $\lambda_0=\nu \subset \lambda_1\subset \cdots\subset \lambda_n=\lambda$ where $\lambda_i/\lambda_{i-1}$ is a $k$-ribbon starting at $-c_i$ for $-c_1>-c_2>\cdots>-c_n$. We sometimes view $k$-decompositions as its induced ribbons.
        (In terms of Petrie graphs, they corresponds to moving the edges $(c_i-k,c_i)$ to $(c_i,c_i+k)$ one per step, from bottom to top.)
	\end{definition}
         To a good orientation $\sigma$ of $\mathcal{P}(\Pet_k(\lambda,\mu))$ we may associate a basic graph and a $k$-decomposition:
         \begin{enumerate}
             \item We may consider a graph $\mathcal{P}_\sigma$, defined to be the graph obtained from $\mathcal{P}'(\Pet_k(\lambda,\mu))$ by rotating each upward arrow $180^\circ$ with respect to its beginning point, so that it now points to the lower cluster instead of the upper cluster. In terms of unordered edges, we move down the upward edges by one cell.
             
             As $\sigma$ is a good orientation, $\mathcal{P}_\sigma$ is basic. In particular, we will not collide edges when rotating those upward edges if starting from bottom to top.
             \item By rotating edges from bottom to top, it induces a $k$-decomposition of $\lambda$ to some $\nu$. Note that $\mathcal{P}'(\Pet_k(\nu,\mu))=\mathcal{P}_\sigma$.
         \end{enumerate}
         
        The terms $k$-decomposition and $k$-decomposability also appear in other papers \cite{MR4881678, MR3537003} when stating the plethystic Murnaghan-Nakayama rule. We will also see later that requiring $-c_1>-c_2>\cdots>-c_n$ ensures that the ribbons are left-justified.
        \begin{figure}[htbp!]
		\centering
		\[\begin{array}{ccccc}
        
			\begin{tikzpicture}[baseline=-1cm, scale=0.9]
				\draw [red] (1,.7)-- (4.5,.7) -- (4.5, -0.2) -- (5, -0.2);
				\node at (3,.6) [above]{$\leq -4$};
				\draw (1,-2.7) -- (1.3,-2.7)--(1.3,-1.7)--(1.5,-1.7)--(1.5,-2.7)--(2.1,-2.7)--(2.1,-1.7)--(2.3,-1.7)--(2.3,-2.7)--(5, -2.7);
				\node at (3,-2.6) [below]{$\geq 9$};
				
				\draw (3.8, 0) circle (6pt) node[inner sep=2pt] (-3){\scriptsize -3};
				\draw (3, 0) circle (6pt) node[inner sep=2pt] (-2){\scriptsize -2};
				\draw (2.2, 0) circle (6pt) node[inner sep=2pt] (-1){\scriptsize -1};
				\draw (1.4, 0) circle (6pt) node[inner sep=2pt] (0){$0$};
				\draw (4.6,-1) circle (6pt) node[inner sep=2pt] (1){$1$};
				\draw (3.8,-1) circle (6pt) node[inner sep=2pt] (2){$2$};
				\draw (3  ,-1) circle (6pt) node[inner sep=2pt] (3){$3$};
				\draw (2.2,-1) circle (6pt) node[inner sep=2pt] (4){$4$};
				\draw (1.4,-1) circle (6pt) node[inner sep=2pt] (5){$5$};
				\draw (4.6,-2) circle (6pt) node[inner sep=2pt] (6){$6$};
				\draw (3.8,-2) circle (6pt) node[inner sep=2pt] (7){$7$};
				\draw (3  ,-2) circle (6pt) node[inner sep=2pt] (8){$8$};
				\draw[red,-] (-3) -- (-2);
				\draw[red,-] (-2) -- (-1);
				\draw[red,-] (3) -- (4);
				\draw[red,-] (7) -- (8);
				\draw[->] (2.2,.7) -- (-1);
				\draw[->] (-1) -- (4);
				\draw[->] (3) -- (8);
				\draw[->] (6) -- (4.6,-2.7);
				
				\draw[blue, <-] (1.4,.7) -- (0) node[midway,right]{1};
				\draw[blue, <-] (4.6,-.2) -- (1) node[midway,right]{2};
				\draw[blue, <-] (-3)-- (2) node[midway,right]{3};
				\draw[blue, <-] (0)-- (5) node[midway,right]{4};
				\draw[blue, <-] (2) -- (7) node[midway,right]{5};
			\end{tikzpicture}\quad 
			& \to&
			\begin{tikzpicture}[baseline=-1cm, scale=0.9]
				\draw [red] (1,.7)-- (4.5,.7) -- (4.5, -0.2) -- (5, -0.2);
				\node at (3,.6) [above]{$\leq -4$};
				\draw (1,-2.7) -- (1.3,-2.7)--(1.3,-1.7)--(1.5,-1.7)--(1.5,-2.7)--(2.1,-2.7)--(2.1,-1.7)--(2.3,-1.7)--(2.3,-2.7)--(5, -2.7);
				\node at (3,-2.6) [below]{$\geq 9$};
				
				\draw (3.8, 0) circle (6pt) node[inner sep=2pt] (-3){\scriptsize -3};
				\draw (3, 0) circle (6pt) node[inner sep=2pt] (-2){\scriptsize -2};
				\draw (2.2, 0) circle (6pt) node[inner sep=2pt] (-1){\scriptsize -1};
				\draw (1.4, 0) circle (6pt) node[inner sep=2pt] (0){$0$};
				\draw (4.6,-1) circle (6pt) node[inner sep=2pt] (1){$1$};
				\draw (3.8,-1) circle (6pt) node[inner sep=2pt] (2){$2$};
				\draw (3  ,-1) circle (6pt) node[inner sep=2pt] (3){$3$};
				\draw (2.2,-1) circle (6pt) node[inner sep=2pt] (4){$4$};
				\draw (1.4,-1) circle (6pt) node[inner sep=2pt] (5){$5$};
				\draw (4.6,-2) circle (6pt) node[inner sep=2pt] (6){$6$};
				\draw (3.8,-2) circle (6pt) node[inner sep=2pt] (7){$7$};
				\draw (3  ,-2) circle (6pt) node[inner sep=2pt] (8){$8$};
				\draw[red,-] (-3) -- (-2);
				\draw[red,-] (-2) -- (-1);
				\draw[red,-] (3) -- (4);
				\draw[red,-] (7) -- (8);
				\draw[->] (2.2,.7) -- (-1);
				\draw[->] (-1) -- (4);
				\draw[->] (3) -- (8);
				\draw[->] (6) -- (4.6,-2.7);
				
				\draw[blue, ->] (0) -- (5) node[midway,right]{1};
				\draw[blue, ->] (1) -- (6) node[midway,right]{2};
				\draw[blue, ->] (2) -- (7) node[midway,right]{3};
				\draw[blue, ->] (5) -- (1.4,-1.7) node[midway,right]{4};
				\draw[blue, ->] (7) -- (3.8,-2.7) node[midway,right]{5};
			\end{tikzpicture}\quad 
			& \to &
			\ytableaushort
			{
				\none\none\none\none {*(black)0}{*(black)0},
				\none\none1111,
				\none\none1222,
				\none\none2233,
				\none\none333,
				\none{*(black)0}444,
				\none445,
				\none555,
				{*(black)0}5}
			\\
			\text{A good orientation } \sigma_1 && \mathcal{P}_{\sigma_1} && \text{$k$-decomposition of $\sigma_1$}
            \\
			\ytableaushort
			{
				\none\none\none\none {*(black)0}{*(black)0},
				\none\none{*(black)0}{*(black)0}11,
				\none\none1112,
				\none\none2222,
				\none\none333,
				\none3344,
				\none444,
				\none555,
				55}
			\quad 
			& \to&
			\begin{tikzpicture}[baseline=-1cm, scale=0.9]
				\draw [red] (1,.7)-- (4.5,.7) -- (4.5, -0.2) -- (5, -0.2);
				\node at (3,.6) [above]{$\leq -4$};
				\draw (1,-2.7) -- (1.3,-2.7)--(1.3,-1.7)--(1.5,-1.7)--(1.5,-2.7)--(2.1,-2.7)--(2.1,-1.7)--(2.3,-1.7)--(2.3,-2.7)--(5, -2.7);
				\node at (3,-2.6) [below]{$\geq 9$};
				
				\draw (3.8, 0) circle (6pt) node[inner sep=2pt] (-3){\scriptsize -3};
				\draw (3, 0) circle (6pt) node[inner sep=2pt] (-2){\scriptsize -2};
				\draw (2.2, 0) circle (6pt) node[inner sep=2pt] (-1){\scriptsize -1};
				\draw (1.4, 0) circle (6pt) node[inner sep=2pt] (0){$0$};
				\draw (4.6,-1) circle (6pt) node[inner sep=2pt] (1){$1$};
				\draw (3.8,-1) circle (6pt) node[inner sep=2pt] (2){$2$};
				\draw (3  ,-1) circle (6pt) node[inner sep=2pt] (3){$3$};
				\draw (2.2,-1) circle (6pt) node[inner sep=2pt] (4){$4$};
				\draw (1.4,-1) circle (6pt) node[inner sep=2pt] (5){$5$};
				\draw (4.6,-2) circle (6pt) node[inner sep=2pt] (6){$6$};
				\draw (3.8,-2) circle (6pt) node[inner sep=2pt] (7){$7$};
				\draw (3  ,-2) circle (6pt) node[inner sep=2pt] (8){$8$};
				\draw[red,-] (-3) -- (-2);
				\draw[red,-] (-2) -- (-1);
				\draw[red,-] (3) -- (4);
				\draw[red,-] (7) -- (8);
				\draw[->] (2.2,.7) -- (-1);
				\draw[->] (6) -- (4.6,-2.7);
				\draw[->] (-3)-- (2);
				\draw[->] (2) -- (7);
				
				\draw[blue, ->] (0)--(5) node[midway,right]{1};
				\draw[blue, ->] (1)--(6) node[midway,right]{2};
				\draw[blue, ->] (4)--(2.2,-1.7) node[midway,right]{3};
				\draw[blue, ->] (5)--(1.4,-1.7) node[midway,right]{4};
				\draw[blue, ->] (8)--(3  ,-2.7) node[midway,right]{5};
			\end{tikzpicture}
			\quad 
			& \to&
			\begin{tikzpicture}[baseline=-1cm, scale=0.9]
				\draw [red] (1,.7)-- (4.5,.7) -- (4.5, -0.2) -- (5, -0.2);
				\node at (3,.6) [above]{$\leq -4$};
				\draw (1,-2.7) -- (1.3,-2.7)--(1.3,-1.7)--(1.5,-1.7)--(1.5,-2.7)--(2.1,-2.7)--(2.1,-1.7)--(2.3,-1.7)--(2.3,-2.7)--(5, -2.7);
				\node at (3,-2.6) [below]{$\geq 9$};
				
				\draw (3.8, 0) circle (6pt) node[inner sep=2pt] (-3){\scriptsize -3};
				\draw (3, 0) circle (6pt) node[inner sep=2pt] (-2){\scriptsize -2};
				\draw (2.2, 0) circle (6pt) node[inner sep=2pt] (-1){\scriptsize -1};
				\draw (1.4, 0) circle (6pt) node[inner sep=2pt] (0){$0$};
				\draw (4.6,-1) circle (6pt) node[inner sep=2pt] (1){$1$};
				\draw (3.8,-1) circle (6pt) node[inner sep=2pt] (2){$2$};
				\draw (3  ,-1) circle (6pt) node[inner sep=2pt] (3){$3$};
				\draw (2.2,-1) circle (6pt) node[inner sep=2pt] (4){$4$};
				\draw (1.4,-1) circle (6pt) node[inner sep=2pt] (5){$5$};
				\draw (4.6,-2) circle (6pt) node[inner sep=2pt] (6){$6$};
				\draw (3.8,-2) circle (6pt) node[inner sep=2pt] (7){$7$};
				\draw (3  ,-2) circle (6pt) node[inner sep=2pt] (8){$8$};
				\draw[red,-] (-3) -- (-2);
				\draw[red,-] (-2) -- (-1);
				\draw[red,-] (3) -- (4);
				\draw[red,-] (7) -- (8);
				\draw[->] (2.2,.7) -- (-1);
				\draw[->] (6) -- (4.6,-2.7);
				\draw[->] (-3)-- (2);
				\draw[->] (2) -- (7);
				
				\draw[blue, <-] (1.4,.7) -- (0) node[midway,right]{1};
				\draw[blue, <-] (4.6,-.2) -- (1) node[midway,right]{2};
				\draw[blue, <-] (-1) -- (4) node[midway,right]{3};
				\draw[blue, <-] (0)-- (5) node[midway,right]{4};
				\draw[blue, <-] (3) -- (8) node[midway,right]{5};
			\end{tikzpicture}
			\\
			\text{A good pair } (\nu,\Theta) && \mathcal{P}'(\Pet_5(\nu,\mu)) && \text{resulting orientation }\sigma_2\vspace{20pt}
		\end{array}\]
		\caption{Two examples of correspondence between good pairs and good orientations of $\lambda/\mu=666655442/42222111$}
		\label{fig:proper_correspondence}
	\end{figure}
    \begin{example}
        The first row of \cref{fig:proper_correspondence} shows a good orientation $\sigma_1$ of $\mathcal{P}'(\Pet_5(\lambda,\mu))$ with $\lambda/\mu=666655442/42222111$, its corresponding basic $\mathcal{P}_{\sigma_1}$, and the ribbons in the $k$-decomposition of $\sigma$. The black part represents $\nu/\mu$ and the numbers represents the ribbons in the $k$-decomposition of $\sigma_1$.
    \end{example}
        
    \begin{proposition}\label{prop:initial}
        Let $\mu\subseteq \nu\subseteq \lambda$ be partitions. 
        \begin{enumerate}
            \item Then $\nu/\mu$ is a horizontal strip if and only if $\mathcal{P}'(\Pet_k(\nu,\mu))$ is basic. 
            
            In this case, $\mathcal{P}'(\Pet_k(\nu,\mu))$ is a tree if and only if $\nu/\mu$ is a horizontal strip whose size of each part is smaller than $k$.
            \item Let $\sigma$ be a good orientation of $\mathcal{P}(\Pet_k(\lambda,\mu))$ (with the root being the bottom vertex). Then its order $|\sigma|$ equals the number of upward edges of $\sigma$, and $\inv(\tau_\sigma)$ is the sum of the height of the ribbons in the $k$-decomposition induced by $\sigma$.
        \end{enumerate}
    \end{proposition}
    \begin{proof}\ \\
    \begin{enumerate}
        \item 
        A cluster of $\mathcal{P}'(\Pet_k(\nu,\mu))$ consists of an interval of cells $[-\mu_j+j,-\mu_{j+1}+j]$, where we define $\mu_0=\infty$. On the other hand, $\nu/\mu$ is a horizontal strip if and only if their sequences are interlaced, i.e., $\nu_1\ge \mu_1\ge \nu_2\ge \cdots$. 

        If $\nu/\mu$ is a horizontal strip, then for each $j\geq 0$, there is only one $i$, namely $j+1$, that satisfies $-\nu_i+i-1 \in [-\mu_j+j,-\mu_{j+1}+j]$. Hence, only the cell $-\nu_{j+1}+j$ has an edge going down from this cluster.

        Conversely, if $\mathcal{P}'(\Pet_k(\nu,\mu))$ is basic, then for each $j\geq 0$, there is a unique $i$ such that $-\nu_i+i-1 \in [-\mu_j+j,-\mu_{j+1}+j]$. This forces $i=j+1$. By plugging in $i=j+1$ we obtain $\mu_j\geq \nu_{j+1}$ for any $j\geq 0$.

        Now assume that $\mathcal{P}'(\Pet_k(\lambda,\mu))$ is basic. Notice that $\mathcal{P}'(\Pet_k(\nu,\mu))$ is a basic tree if and only if each edge does not form a self-loop, which is equivalent to that $-\nu_{j+1}+j+k > -\mu_{j+1}+j$, i.e. $\lambda_{j+1}-\mu_{j+1}<k$.
        \item In $\mathcal{P}(\Pet_k(\lambda,\mu))$, the edges are initially pointed downward. Hence, $|\sigma|$, the number of edges that changed their directions, is the number of upward edges.

        The inversions of the permutation $\tau_\sigma$ never form in any pair of upward edges or downward edges. Thus, the inversion is the number of crossings happens when we move down the rows of upward edges in $\Pet_k(\lambda,\mu)$ one by one, starting from the largest index one. Let $\lambda_0=\nu\subset \lambda_1\subset\lambda_2\subset\cdots\subset \lambda_n=\lambda$ be the $k$-decomposition. When rotating the $i$-th upward edge, we move it down by one cell and move the corresponding row $\htt(\lambda_{n-i}/\lambda_{n-i-1})$ rows below. The result then follows.
    \end{enumerate}
    \end{proof}        
    As clusters are determined by $\mu$, the basic graphs with cluster induced by $\mu$ corresponds to the horizontal strips in $\nu/\mu$ fixing $\mu$. 

    The last piece of the dictionary is between good pairs of $\lambda/\mu$ and good orientations of $\Pet_k(\lambda,\mu)$. See \cref{fig:proper_correspondence} for examples.
    \begin{proposition}\label{prop:translation}
        The map from good orientations to their induced base shape $\nu$ and ribbons in $k$-decomposition is a bijection between good orientations $\sigma$ of $\mathcal{P}'(\Pet_k(\lambda,\mu))$ and $k$-good pairs $(\nu,\Theta)$ of $\lambda/\mu$.
    \end{proposition}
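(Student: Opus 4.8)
The plan is to construct the bijection explicitly through the $k$-decomposition machinery already assembled, and to verify it is well-defined, injective, and surjective by reducing everything to the base-graph case handled in \cref{prop:initial}. Given a good orientation $\sigma$ of $\mathcal{P}_k(\lambda,\mu)$, I would first extract its base graph $\mathcal{P}_\sigma = \mathcal{P}'_k(\nu,\mu)$ and the associated partition $\nu$, together with the sequence of $k$-ribbons $\lambda_0=\nu\subset\lambda_1\subset\cdots\subset\lambda_n=\lambda$ supplied by the $k$-decomposition. The candidate image is then the pair $(\nu,\Theta)$ where $\Theta=(\lambda_1/\lambda_0,\dots,\lambda_n/\lambda_{n-1})$.

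The first thing to check is that $(\nu,\Theta)$ is actually a good pair. Since the base graph $\mathcal{P}_\sigma$ has every cluster carrying exactly one downward edge, \cref{prop:initial} gives immediately that $\nu/\mu$ is a horizontal strip, which is one of the two defining conditions. For the second, I must verify that $\Theta$ is a \emph{left-justified} $k$-tiling of $\lambda/\nu$: the ribbons are size-$k$ by construction of the $k$-decomposition, they are disjoint and exhaust $\lambda/\nu$ because they come from the telescoping chain $\nu\subset\lambda_1\subset\cdots\subset\lambda$, and left-justification should follow from \cref{prop:removingHook}(iv) once I confirm that moving an upward edge corresponds to a ribbon whose starting box sits on the left border of $\lambda/\mu$ --- this is where the convention that a cluster's unique outward edge is ``empty below'' (the final remark before \cref{prop:OrientWeight}) does the work. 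Conversely, given a good pair $(\nu,\Theta)$, I would build $\sigma$ by starting from the base graph $\mathcal{P}'_k(\nu,\mu)$ (good by \cref{prop:initial}) and applying \cref{prop:removingHook}(iv) in reverse, pushing each ribbon's edge upward from $(c_i-k,c_i)$ to $(c_i,c_i+k)$; the resulting orientation is good because the base orientation was good and each upward move preserves the out-degree-one condition at every non-bottom vertex.

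The two directions are mutually inverse essentially by construction, since the $k$-decomposition reads off the upward edges of $\sigma$ and the reverse construction reinserts exactly those edges; the only genuine content is checking that the ordering $c_1<c_2<\cdots<c_n$ of starting contents is the \emph{unique} one compatible with a valid chain of ribbon additions, so that $\Theta$ and $\sigma$ determine each other without ambiguity. I expect the main obstacle to be precisely the left-justification and well-definedness of the tiling order: I need to argue that when several ribbons are added, each intermediate shape $\lambda_i$ is a genuine partition and each $\lambda_i/\lambda_{i-1}$ is a left-justified ribbon of $\lambda_i/\nu$, rather than merely a size-$k$ skew shape floating somewhere in the interior. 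This amounts to showing that the ``empty below'' property of upward edges propagates correctly through the sequence of edge-moves, which I would handle by induction on $n$ using \cref{prop:removingHook} to track the abacus of $\lambda_i$ at each step, reducing the base case to \cref{prop:initial}.
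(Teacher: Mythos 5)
Your proposal follows the paper's proof essentially verbatim: the same bijection via the $k$-decomposition, the same three checks (well-definedness via \cref{prop:initial} plus left-justification of the ribbons, injectivity from distinctness of $k$-decompositions, and surjectivity by re-adding the ribbons of a good pair in order of decreasing starting content so that no edge moves twice), using the same supporting lemmas. The left-justification point you flag as the main obstacle is settled in the paper by the single observation that ribbons removed earlier start lower, so the cell to the left of any starting box is never occupied by a later ribbon --- precisely the ordering argument your proposed induction would formalize.
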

    \begin{proof}
        We check that
        \begin{itemize}
            \item This map is well-defined: The resulting $(\nu,\Theta)$ from the $k$-decomposition of $\sigma$ is a good pair.
            \item This map is injective;
            \item This map is surjective: For each good pair $(\nu,\Theta)$, the horizontal strip $\nu/\mu$ corresponds to a base graph, and adding back the ribbons in $\Theta$ induces a good orientation of $\mathcal{P}'(\Pet_k(\lambda,\mu))$.
        \end{itemize}
        $\mathcal{P}_\sigma=\mathcal{P}'(\Pet_k(\nu,\mu))$ is a basic graph, it follows that $\nu/\mu$ is a horizontal strip from \cref{prop:initial}. In the $k$-decomposition of $\sigma$, after we remove a ribbon $\lambda_i/\lambda_{i-1}$, we only remove blocks that are at the upper right of its starting block after. Hence, the left to its starting block is never removed and ribbons are left-justified.

        The injectivity of the correspondence is clear: different good orientations removes ribbons starting at different contents, yielding different $k$-decompositions. 

        Now, for each good pair $(\nu,\Theta)$, we can do the reversed operation of getting the $k$-decomposition, namely adding ribbons in $\Theta$ back one at a time, starting with ribbons with larger content. 
        
        Since we are adding ribbons with larger content first, no edges in $\mathcal{P}'(\Pet_k(\nu,\mu))$ need to move twice to get $\mathcal{P}'(\Pet_k(\lambda,\mu))$. Mark each moved edge as upward and the rest as downward, and we get an orientation of $\mathcal{P}'(\Pet_k(\lambda,\mu))$. The orientation is good since we start with a basic graph by \cref{prop:initial}, and each step only changes the position of the outward edge of a cluster. Surely, this good orientation induces $(\nu,\Theta)$.
    \end{proof}

    Having this correspondence in hand, we can conclude our results for $k$-Petrie numbers in terms of good pairs.

    \begin{proof}[Proof (of \cref{thm:refinedJJL})]
        \cref{thm:PetWeightInvo} tells us that there are two possibilities about the good orientations of $\mathcal{P}(\Pet_k(\lambda,\mu))$:
        \begin{enumerate}
            \item There is a unique good orientation for $\mathcal{P}(\Pet_k(\lambda,\mu))$. Let $(\nu,\Theta)$ be the corresponding good pair, then $\mathcal{P}(\Pet_k(\nu,\mu))$ is a tree that is also a basic graph. By \cref{prop:initial}, we know that $\nu/\mu$ is a horizontal strip whose rows have less than $k$ blocks.

            \item There are either $0$ or a power of $2$ good orientations for $\mathcal{P}(\Pet_k(\lambda,\mu))$. In this case, the weight of good orientations cancel each other out via involutions induced by cycles of $\mathcal{P}(\Pet_k(\lambda,\mu))$.
        \end{enumerate}
        
        On the other hand, from \cref{prop:translation}, we find that each good orientation $\sigma$ of $\mathcal{P}(\Pet_k(\lambda,\mu))$ corresponds to a good pair $(\nu,\Theta)$ with weight $w(\sigma)=\prod_{i=1}^{|\Theta|}(-1)^{\htt(\Theta_i)+1}$. The conclusion of \cref{thm:PetWeightInvo} implies that
        \[\det(\Pet_k(\lambda,\mu)) = \sum_{\sigma:\text{good orientation of }\Pet_k(\lambda,\mu)}w(\sigma) = \sum_{(\nu,\Theta):\text{good pair of }\lambda/\mu}\prod_{i=1}^{|\Theta|}(-1)^{1+\htt(\Theta_i)}.\]
    \end{proof}

    \section{Generating function of good pairs}\label{sec:GenFunc}
        In this section, we are going to prove \cref{thm:binomial}. To better track inversions, we require a stronger property of our $k$-Petrie matrices which we call \textit{totally Petrie}.
        
        \begin{definition}[totally Petrie]
            A $\{0,1\}$-matrix $A$ is \textit{totally Petrie} if for each $0$ in it, either all its upper-right entries or all its lower-left entries are $0$. 
        \end{definition}
        It is true that every $\Pet_k(\lambda,\mu)$ is totally Petrie. In fact, every totally Petrie matrix can be realized as $\Pet_k(\lambda,\mu)$ for some $\lambda,\mu,k$ via inserting suitable columns. See \cref{prop:extendTotallyPetrie}.
        
        If a Petrie matrix is totally Petrie, the effect of cycle involutions on $|\sigma|$ and $\inv(\tau_\sigma)$ is much more specific.
        \begin{theorem}\label{thm:InvoChangeWeight}
            Let $P$ be a totally Petrie matrix and $\sigma$ be a good orientation. Let $\varphi$ be an involution induced by the cycle $C$ of length $n$ of the Petrie graph.
            
            Then, 
            $|\varphi(\sigma)|-|\sigma|=\pm [2\nmid n]$ and 
            $\inv(\tau_{\varphi(\sigma)})-\inv(\tau_\sigma)=\pm [2\mid n]$,
            where the Iverson symbol $[P]$ is $1$ if $P$ is true, and $0$ otherwise.

            Furthermore, suppose that the first row of $\sigma$ that is changed by $\varphi$ is $r_i$. Then the signs in both formulas are positive if $r_i$ consists of $0$ and $1$'s, and is negative if $r_i$ consists of $0$ and $-1$'s.
        \end{theorem}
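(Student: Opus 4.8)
The plan is to reduce the theorem to two sharp absolute bounds and then obtain each bound from the cylinder geometry. The starting point is the proof of \cref{thm:PetWeightInvo}: reversing the cycle $C$ negates the $n$ rows lying on $C$ and then permutes their positions by an $n$-cycle $\tau_C$. Reading off the two factors separately yields the parity relations $|\varphi(\sigma)|-|\sigma|\equiv n$ and $\inv(\tau_{\varphi(\sigma)})-\inv(\tau_\sigma)\equiv n-1\pmod 2$. Hence it suffices to prove
\[|\,|\varphi(\sigma)|-|\sigma|\,|\le 1 \quad\text{and}\quad |\inv(\tau_{\varphi(\sigma)})-\inv(\tau_\sigma)|\le 1.\]
Combined with the parities, the first bound forces the value $0$ when $n$ is even and $\pm1$ when $n$ is odd, and the second does the opposite, which is exactly the two displayed formulas.

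For the first bound I would work on the cylinder. I first establish the crossing-free embedding of $\mathcal{P}_k(\lambda,\mu)$ promised in \cref{thm:mainGenFunc}: totally Petrie says both endpoints of the rows are weakly monotone, so placing the cell of content $c$ at column $c\bmod k$ and row $\lfloor c/k\rfloor$ draws every vertical edge $c\leftrightarrow c+k$ along a runner and every merging edge between adjacent columns without crossings. By \cref{prop:OrientWeight}, $|\sigma|$ is the number of upward edges, so $|\varphi(\sigma)|-|\sigma|$ is the net (down minus up) count $p-q$ of the consistently oriented cycle edges. Tracking contents around the cycle, the vertical edges contribute $k(p-q)$ while the within-cluster moves contribute the total horizontal content $S$; closure gives $k(p-q)+S=0$, so $S$ is a multiple $wk$ of $k$ and $|\varphi(\sigma)|-|\sigma|=p-q=-w$, where $w$ is the winding number of the cycle about the cylinder. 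Since a consistently oriented cycle is a \emph{simple} closed curve on the cylinder, it is either null-homotopic or isotopic to the core circle, whence $w\in\{-1,0,1\}$ and $|p-q|\le 1$; moreover $n=p+q\equiv p-q=w\pmod 2$, so $w=0$ exactly when $n$ is even. This is the first bound together with the correct dichotomy.

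For the second bound I would analyze the rotation $\tau_C$ directly. Writing $\tau_\sigma$ as its word of tails, the tails of the $n$ oriented cycle edges are precisely the $n$ cycle vertices, and $\varphi$ replaces each such tail by the opposite endpoint of its edge, i.e.\ it rotates these $n$ values by one step along the cycle. The monotone embedding forces the cycle vertices to occur in cyclic content order, so on a non-contractible cycle ($w=\pm1$) the rotation preserves the relative order of the tails and flips no inversion, whereas on a contractible cycle ($w=0$) it slides a single tail past one neighbour in the content order and flips exactly one inversion. Equivalently, using $\inv(\tau_\sigma)=\htt(\sigma)$ and \cref{prop:removingHook}, reversing $C$ changes the total ribbon height by the net number of row-borders the cycle crosses, which is $0$ in the winding case and $\pm1$ in the contractible case. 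Either way $|\inv(\tau_{\varphi(\sigma)})-\inv(\tau_\sigma)|\le 1$.

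For the sign, let $r_i$ be the topmost row of $\sigma$ touched by $\varphi$. Reversing $C$ turns the consistent orientation into its opposite, and the direction in which $\tau_C$ rotates the tails — hence whether the single affected upward edge or inversion is created or destroyed — is governed by whether $r_i$ currently points downward or upward: entries in $\{0,1\}$ mark a downward (natural) edge, whose reversal creates the extra upward edge/inversion and gives the $+$ sign, while entries in $\{0,-1\}$ mark an upward edge, whose reversal destroys one and gives the $-$ sign. I expect the genuinely delicate point to be the claim that the cycle vertices occur in cyclic content order, so that $\tau_C$ moves only a single tail past a single neighbour; this is where the full strength of the totally Petrie hypothesis (equivalently, the crossing-free cylinder embedding) is needed, and it is the step I would write out most carefully.
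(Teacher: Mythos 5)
Your treatment of the first bullet is fine: the identity $|\varphi(\sigma)|-|\sigma|=p-q$ for the down/up counts on the cycle, together with the observation that a consistently oriented cycle embedded on the cylinder has winding number in $\{-1,0,1\}$ and hence $|p-q|\le 1$ with $p-q\equiv n\pmod 2$, is essentially the same dichotomy the paper records in its proof of \cref{thm:mainGenFunc} (contractible cycles have equally many up and down edges, non-contractible ones differ by one); the paper's own proof of this bullet instead uses the alternating linear dependence $\sum_i(-1)^iv_i=0$ of \cref{lemma:cycleCaseLemma}, but either route works.

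The second bullet is where the proposal breaks down, and you correctly identified the fragile step — unfortunately it is not merely delicate but false. The cycle vertices do \emph{not} occur in cyclic content order along a contractible cycle: such a cycle goes down one side of a disk and back up the other, so the traversal order of contents is unimodal rather than cyclically monotone. For instance, for the cycle Petrie matrix with rows $v[0,1],v[0,3],v[1,4],v[2,4],v[2,5],v[3,5]$ the $6$-cycle is traversed as $0,1,4,2,5,3$, the two tail-words are $031425$ and $104253$, and every position of the word changes; the inversion number still moves by exactly $1$, but not because ``a single tail slides past one neighbour.'' Consequently neither your rotation argument nor the alternative phrasing via ribbon heights and row-borders establishes the bound $|\inv(\tau_{\varphi(\sigma)})-\inv(\tau_\sigma)|\le 1$. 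The paper's proof has to work much harder here: it first reduces to an $n\times(n-1)$ cycle Petrie matrix (after arguing, from the totally Petrie shape, that inversions between cycle rows and non-cycle rows are unchanged — a step your proposal also skips), then proves the exact identity $\inv(\tau_{\sigma_2})-\inv(\tau_{\sigma_1})=\sum_{k=1}^{n-1}(-1)^{k-1}(b_k-k)+[2\mid n]$ using the excedance structure of $\tau_\sigma$, and finally shows the alternating sum vanishes by encoding the column data as a Motzkin path satisfying the red--blue rule and proving a lattice-point parity lemma for such paths. That enumerative core (Lemmas 7.4--7.6 and the accompanying proposition) is the actual content of the theorem and is not replaced by anything in your argument; the ``furthermore'' sign statement, which you treat only heuristically, also falls out of that computation rather than from the orientation of $r_i$ alone.
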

        The proof is somewhat technical and involves interesting enumerative structures. Therefore, we apply this theorem to obtain \cref{thm:binomial} before expanding with the proof in the next subsection.
        \begin{example}
            In \cref{fig:proper_correspondence}, from the first good orientation $\sigma_1$ to the second one $\sigma_2$, the cycle of clusters ([-1\,-2\,-3]\,[3\,4]\,[7\,8]\,[2]) is reversed. The cycle length is $4$. Since $2\mid4$, \cref{thm:InvoChangeWeight} predicts that \begin{enumerate*}[(1)]
                \item they have the same order $|\sigma_1|=|\sigma_2|$. Indeed, they both have $5$ upward edges.
                \item The first edge in the Petrie graph that is changed by the involution is the one pointed from -3 to 2, which is a upward (negative) edge in $\sigma_1$. Thus the theorem predicts that $\inv(\tau_{\sigma_2})$ is one less than $\inv(\tau_{\sigma_1})$.
                Indeed, by counting thew height of ribbons, $\inv(\tau_{\sigma_1})=1+1+1+1+2=6$, whereas $\inv(\tau_{\sigma_2})=1+1+1+1+1=5$.
            \end{enumerate*}
        \end{example}
        Note that the furthermore part says that the effect of each involution $\varphi$ on the order and $\inv$ applies independently, and commute with each other. 
        
        Given $k,\lambda,\mu$, we embed $\mathcal{P}'(\Pet_k(\lambda,\mu))$ into a cylinder of perimeter $k$ by curling the doubly-infinite strip of integer vertices onto it. Formally, let the cylinder be $S^1\times \R$, and we map integers by $t\mapsto (e^{\frac{-2\pi i t}{k}}, -\frac{t}{k})$. The vertex $\leq -\mu_1$ is put near $(0,\infty)$ and $\geq \lambda_1$ is put near $(0,-\infty)$. 
        
        Due to the shape of the cylinder, there are two kinds of cycles on $\mathcal{P}'(\Pet_k(\lambda,\mu))$:

        \begin{itemize}
            \item \textit{Contractible cycles}: Cycles that enclose a bounded area of the cylinder. Such a cycle is formed by several horizontal edges and an equal number of downward and upward edges. Hence, such a cycle has an even length.
 
            \item \textit{Non-contractible cycles}: Cycles that separate the top region and the bottom region. As a cycle does not cross itself, such a cycle will traverse one clockwise or counterclockwise round on the cylinder, and thus its number of downward edges and that of upward edges differ by $1$. Hence, such a cycle has an odd length.
        \end{itemize}
        
        Now we may define our statistics.
        \begin{definition}
            
        Let $\mathsf{c}(\lambda,\mu)$ be the number of contractible cycles in $\mathcal{P}'(\Pet_k(\lambda,\mu))$ and $\mathsf{nc}(\lambda,\mu)$ to be the number of non-contractible cycles. Also, let $\sigma_0$ be the good orientation with minimum number of upward edges and minimum number of $\inv(\tau_{\sigma})$ among all good orientations of $\mathcal{P}'(\Pet_k(\lambda,\mu))$.
        \end{definition}

        Now we can prove the expansion of $D_k(\lambda/\mu; t,q)$ in \cref{thm:binomial}. Recall that 
        \[D_k(\lambda/\mu; t,q)= \sum_{\sigma:\text{good}} t^{|\sigma|}q^{\inv(\sigma)} = \sum_{(\nu,\Theta):\text{ good pair of }\lambda/\mu} t^{|\Theta|}q^{\sum_{i=1}^q \htt(\Theta_i)},\]
        where in the first sum, $\sigma$ runs through all good orientations of $\mathcal{P}(\Pet_k(\lambda,\mu))$.
        
        \begin{proof}
            By \cref{thm:InvoChangeWeight}, every involution commute with each other, and if $\sigma_0$ is obtained from $\sigma$ by reversing $r$ contractible and $s$ non-contractible cycles, we have $t^{|\sigma|}q^{\inv(\tau_{\sigma})}=t^{|\sigma_0|+r}q^{\inv(\tau_{\sigma_0})+s}$. Hence, we obtain 
            \[D_k(\lambda/\mu; t,q)=\sum_{\sigma:\text{good orientation}}t^{|\sigma|}q^{\inv(\tau_{\sigma})}=t^{|\sigma_0|}q^{\inv(\tau_{\sigma_0})}(1+q)^{\mathsf{c}(\lambda,\mu)}(1+t)^{\mathsf{nc}(\lambda,\mu)}.\]

            If we want to classify good pairs $(\nu,\Theta)$ by $|\Theta|$, then by definition, $\sum_{n}D_{k,n}(\lambda/\mu) t^n =D_k(\lambda/\mu; t,1)$, which is in turn $t^{|\sigma_0|}\cdot 2^{\mathsf{c}(\lambda,\mu)}(1+t)^{\mathsf{nc}(\lambda,\mu)}$. Hence, $D_{k,n}=2^{\mathsf{c}(\lambda,\mu)} 
            \binom{\mathsf{nc}(\lambda,\mu)}{n-|\sigma_0|}$ when $|\sigma_0|\leq n\leq |\sigma_0|+\mathsf{nc}_k(\lambda,\mu)$.
        \end{proof}
    
        Note that even when $\lambda/\mu$ is connected, we cannot bound the number of non-contractible cycles.
        \begin{example}\label{ex:counterexample}
            Let $k=3$ and $\lambda/\mu=977764443/65553222$, see \cref{fig:counterexample}. Then $\mathcal{P}(\Pet_3(\lambda,\mu))$ has $3$ non-contractible cycles, which are $([2\,3],[5],[6\,7\,8])$, $([-4\,-5],[-2],[-1\,0\,1])$, and a self loop to the top vertex connecting cells $-9$ and $-6$. Indeed, $D_3(\lambda/\mu;t,q)=t^4q^4(1+t)^3$. 

            There are $8$ good pairs of $\mathcal{P}(\Pet_3(\lambda,\mu))$, $3$ of them has $5$ ribbons, also shown in \cref{fig:counterexample}. 
        \end{example}
        \begin{figure}
            \centering
            \[\begin{array}{ccc}
            \begin{tikzpicture}[baseline=-1cm, scale=0.9,y=0.7cm]
                \draw [red] (1.5,3.7)--(2.5,3.7)--(2.5,2.7)--(4.5,2.7);
				\node at (3.5,3) [above]{$\leq -10$};
				\draw (1.5,-3.7) -- (1.9,-3.7)--(1.9,-2.7)--(2.1,-2.7)--(2.1,-3.7)--(4.5, -3.7);
				\node at (3,-3.7) [below]{$\geq 9$};
                \draw (2,3) circle (6pt) node[inner sep=2pt] (-9){\scriptsize -9};
                \draw (4,2) circle (6pt) node[inner sep=2pt] (-8){\scriptsize -8};
                \draw (3,2) circle (6pt) node[inner sep=2pt] (-7){\scriptsize -7};
                \draw (2,2) circle (6pt) node[inner sep=2pt] (-6){\scriptsize -6};
				\draw (4, 1) circle (6pt) node[inner sep=2pt] (-5){\scriptsize -5};
				\draw (3, 1) circle (6pt) node[inner sep=2pt] (-4){\scriptsize -4};
				\draw (2, 1) circle (6pt) node[inner sep=2pt] (-3){\scriptsize -3};
				\draw (4, 0) circle (6pt) node[inner sep=2pt] (-2){\scriptsize -2};
				\draw (3, 0) circle (6pt) node[inner sep=2pt] (-1){\scriptsize -1};
				\draw (2, 0) circle (6pt) node[inner sep=2pt] (0){$0$};
				\draw (4,-1) circle (6pt) node[inner sep=2pt] (1){$1$};
				\draw (3,-1) circle (6pt) node[inner sep=2pt] (2){$2$};
				\draw (2,-1) circle (6pt) node[inner sep=2pt] (3){$3$};
				\draw (4,-2) circle (6pt) node[inner sep=2pt] (4){$4$};
				\draw (3,-2) circle (6pt) node[inner sep=2pt] (5){$5$};
				\draw (2,-2) circle (6pt) node[inner sep=2pt] (6){$6$};
				\draw (4,-3) circle (6pt) node[inner sep=2pt] (7){$7$};
				\draw (3,-3) circle (6pt) node[inner sep=2pt] (8){$8$};

                \draw[->] (-9) -- (-6);
                \draw[->] (-6) -- (-3);
                \draw[->] (-5) -- (-2);
                \draw[->] (-4) -- (-1);
                \draw[->] (-2) -- (1);
                \draw[->] (1) -- (4);
                \draw[->] (2) -- (5);
                \draw[->] (3) -- (6);
                \draw[->] (5) -- (8);

                \draw[red,-] (-8) -- (-7);
                \draw[red,-] (-7) -- (-6);           
                \draw[red,-] (-9) -- (1.5,3) --(1.2,2.7); \draw[red,-] (4.8,2.3)--(4.5,2)--(-8);
                \draw[red,-] (2.5,3) -- (-9);

                \draw[red,-] (-5) -- (-4);
                \draw[red,-] (-1) -- (0);
                \draw[red,-] (0) -- (1.5,0) --(1.2,-.3); \draw[red,-] (4.8,-.7)--(4.5,-1)--(1);
                \draw[red,-] (2) -- (3);
                \draw[red,-] (6) -- (1.5,-2) --(1.2,-2.3); \draw[red,-] (4.8,-2.7)--(4.5,-3)--(7);
                \draw[red,-] (7) -- (8);

            \end{tikzpicture}
            \qquad 
			\ytableaushort
			{
				\none\none\none\none\none\none {}{}{},
                \none\none\none\none\none {}{},
                \none\none\none\none\none {}{},
                \none\none\none\none\none {}{},
                \none\none\none {}{}{},
                \none\none{}{},
                \none\none{}{},
                \none\none{}{},
                {}{}{}}
            \end{array}\]
            \[\ytableaushort{
				\none\none\none\none\none\none {*(black)}{*(black)}{*(black)},
                \none\none\none\none\none {*(black)}{1},
                \none\none\none\none\none {1}{1},
                \none\none\none\none\none {2}{2},
                \none\none\none {*(black)}{*(black)}{2},
                \none\none{3}{3},
                \none\none{3}{4},
                \none\none{4}{4},
                {5}{5}{5}}
                \qquad
                \ytableaushort{
				\none\none\none\none\none\none {*(black)}{*(black)}{*(black)},
                \none\none\none\none\none {1}{1},
                \none\none\none\none\none {1}{2},
                \none\none\none\none\none {2}{2},
                \none\none\none {3}{3}{3},
                \none\none{*(black)}{4},
                \none\none{4}{4},
                \none\none{5}{5},
                {*(black)}{*(black)}{5}}
                \qquad 
                \ytableaushort{
				\none\none\none\none\none\none {1}{1}{1},
                \none\none\none\none\none {*(black)}{2},
                \none\none\none\none\none {2}{2},
                \none\none\none\none\none {3}{3},
                \none\none\none {*(black)}{*(black)}{3},
                \none\none{*(black)}{4},
                \none\none{4}{4},
                \none\none{5}{5},
                {*(black)}{*(black)}{5}}\]
            \caption{$\mathcal{P}(\Pet_3(\lambda,\mu))$ for $\lambda/\mu=977764443/65553222$, and all its good pairs $(\nu,\Theta)$ with $|\Theta|=5$.}
            \label{fig:counterexample}
        \end{figure}

    \subsection{Proof of \texorpdfstring{\cref{thm:InvoChangeWeight}}{Theorem 6.2}}\label{sec:proofMotzkin}
        We first investigate how $|\sigma|$ and $\inv(\tau_\sigma)$ are affected by the involution. 
        Since $P$ is totally Petrie, an inversion of $\tau_{\sigma}$ is only caused by a reversed row moving down and crossing another non-reversed row. Hence, the number of inversion related to one edge only depends on the position change of that edge. 
        
        For edges not in the cycle, their directions and positions are the same in both orientations $\sigma$ and $\varphi(\sigma)$, thus $\inv(\tau_{\varphi(\sigma)})-\inv(\tau_\sigma)$ is the difference of inversions that happens in the cycle.
        
        As the difference of $|\varphi(\sigma)|$ and $|\sigma|$ also only happen in the cycle, we can ignore everything not on the cycle, and it suffices to show the following:
        \begin{proposition}\label{lemma:cycleCase}
            Let $A$ be an $n\times(n-1)$ totally Petrie matrix whose Petrie graph is a cycle and induces two cyclic orientations $\sigma_1$ and $\sigma_2=\varphi(\sigma_1)$.\footnote{Namely, $\sigma_1,\sigma_2$ are good orientations of $A$ after appending a zero column vector to $A$, by which operation we give an isolated root vertex of $\mathcal{P}(A)$.}
            
            Suppose that $\sigma_2$ reverses the first row. Then
            
            \begin{itemize}
                \item $|\sigma_2|-|\sigma_1| = [2\nmid n]$.
                \item $\inv(\tau_{\sigma_2})-\inv(\tau_{\sigma_1})=[2\mid n]$.
            \end{itemize}
        \end{proposition}

        Let $\mathcal{CP}(n)$ denote the set of $n\times (n-1)$ Petrie matrices whose Petrie graph is a cycle. We will call such matrices \emph{cycle Petrie matrices}.
        
        We begin by illustrating some properties of cycle Petrie matrices. 
         
        \begin{lemma}\label{lemma:cycleCaseLemma}
            Let $A\in \mathcal{CP}(n)$ and suppose that $A$ has row vectors $v_i$. Then 
            \begin{enumerate}
                \item $\sum_i (-1)^i v_i=0$.
                \item The column vectors $v[a'_i,b'_i]$ of $A$ satisfies that $a'_{i+1}+b'_{i+1}=a'_{i}+b'_{i}+2$.
            \end{enumerate}
        \end{lemma}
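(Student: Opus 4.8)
The plan is to read everything off the \emph{columns} of $A$. Write the rows as $v[a_i,b_i]$ for $1\le i\le n$, and for each vertex $j\in\{0,1,\dots,n-1\}$ set $L_j=|\{i:a_i=j\}|$ and $R_j=|\{i:b_i=j\}|$, the numbers of edges having $j$ as a left, resp. right, endpoint. Since $\mathcal{P}(A)$ is a single $n$-cycle, every vertex has degree $2$, so $L_j+R_j=2$ for all $j$. I will also use that $A$ is \emph{totally Petrie}: this is the standing hypothesis in the reduction of \cref{lemma:cycleCase}, and it is inherited here by restricting the totally Petrie matrix $\Pet_k(\lambda,\mu)$ to the rows and columns meeting a cycle. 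Totally Petrie forces each column to be a run of consecutive $1$'s, say the $c$-th column equals $v[a'_c,b'_c]$, and it forces the endpoints $a_i,b_i$ to be weakly increasing in $i$; both follow from the defining quadrant condition by the same argument that makes the rows consecutive.

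For part (1) I would evaluate $\sum_i(-1)^i v_i$ one coordinate at a time. Its $c$-th coordinate is $\sum_i(-1)^iA_{ic}=\sum_{i=a'_c+1}^{b'_c}(-1)^i$, an alternating sum over the consecutive block of $1$'s in column $c$. Such a sum vanishes as soon as the block has even length, and its length $b'_c-a'_c$ is exactly the column sum, i.e. the number of edges crossing the cut $\{0,\dots,c-1\}\mid\{c,\dots,n-1\}$. That number is even: summing $\deg(v)=2$ over $v\in\{0,\dots,c-1\}$ gives $2c=2(\text{internal edges})+(\text{crossing edges})$, forcing the crossing count to be even. Hence every coordinate vanishes and $\sum_i(-1)^iv_i=0$. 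This step uses only that columns are consecutive and that column sums are even, not the finer monotonicity.

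For part (2), monotonicity of the endpoints locates each column block precisely. A row $i$ has a $0$ in column $c$ iff $b_i<c$ or $a_i\ge c$; by monotonicity the rows with $b_i<c$ lie above the block and those with $a_i\ge c$ lie below it, so $a'_c=|\{i:b_i<c\}|$ and $b'_c=|\{i:a_i<c\}|$. Passing from $c$ to $c+1$ then yields $a'_{c+1}-a'_c=|\{i:b_i=c\}|=R_c$ and $b'_{c+1}-b'_c=|\{i:a_i=c\}|=L_c$, whence $a'_{c+1}+b'_{c+1}-(a'_c+b'_c)=L_c+R_c=2$, as claimed. Pictorially, shifting one column to the right deletes the $R_c$ rows ending at $c$ from the top of the block and appends the $L_c$ rows starting at $c$ at the bottom.

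The only real work is the structural input of the first paragraph, and I expect this to be the main obstacle: it is what makes the column picture rigid, and the bare hypothesis ``Petrie with cyclic graph'' is genuinely insufficient. Indeed, reordering the rows of a cyclic Petrie matrix can destroy the monotonicity of endpoints (and with it the $+2$ increment of part (2)) while keeping each row consecutive, so one must really invoke the totally Petrie structure coming from \cref{lemma:cycleCase}. I would establish consecutive-columns and monotone-endpoints directly from the quadrant definition: any violation exhibits a $0$ entry with a $1$ in both its upper-right and lower-left regions, contradicting totally Petrie. Minor care is also needed for degenerate cycles (double edges, and the extreme vertices $0$ and $n-1$, which can only be left- resp. right-endpoints), but the identity $L_j+R_j=2$ handles all of these uniformly.
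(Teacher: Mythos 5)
Your proof is correct, but it reaches the two claims by a route that differs from the paper's, most visibly in part (2). For part (1) the paper derives the evenness of each column sum from the linear relation $\sum_i \pm v_i = 0$ that the cycle imposes on the rows, whereas you get it from cut parity (summing $\deg(v)=2$ over $\{0,\dots,c-1\}$ shows every cut is crossed an even number of times); both arguments then finish with the same observation that an alternating sum over a consecutive block of even length vanishes. For part (2) the paper runs a global squeeze: the sums $a'_i+b'_i$ are all even, weakly monotone and pairwise distinct, hence increase by at least $2$ per step, while the first and last columns force the total increase to be exactly $2$ per step on average. Your argument is local and in fact sharper: writing $a'_{c+1}-a'_c=R_c$ and $b'_{c+1}-b'_c=L_c$ and invoking $L_c+R_c=\deg(c)=2$ identifies the two individual increments rather than just their sum, and it makes both parts consequences of the single fact that every vertex of the cycle has degree $2$. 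One caveat you correctly anticipate, and which applies equally to the paper's own proof: the bare definition of $\mathcal{CP}(n)$ (consecutive rows, cyclic graph) does not by itself yield consecutive columns or weakly increasing row endpoints --- permuting the rows destroys both while preserving membership in $\mathcal{CP}(n)$ --- so the lemma is really about the row-sorted matrices inherited from the totally Petrie $\Pet_k(\lambda,\mu)$ as in \cref{lemma:cycleCase}. If you derive the monotonicity from the quadrant condition, make sure to read ``upper-right''/``lower-left'' non-strictly (including the same row and column): with strict quadrants a matrix such as $\bigl(\begin{smallmatrix}0&1&1\\1&1&0\end{smallmatrix}\bigr)$ satisfies the condition yet has unsorted rows. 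This is a point the paper glosses over, and being explicit about it is a genuine improvement.
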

        \begin{proof}\mbox{}\\
        \begin{enumerate}
            \item 
            Since the Petrie graph of $A$ is a cycle, we know that
            \[\sum_i \pm v_i=0\]
            for some appropriate choice of the signs. This implies that each column has an even number of $1$'s, which then implies $\sum_i (-1)^i v_i=0$ since $1$'s in each column appear consecutively.

            \item 
            For the second claim, since $v[a'_i,b'_i]\neq v[a'_{i+1},b'_{i+1}]$ and both $b'_i-a'_i, b'_{i+1}-a'_{i+1}$ are even numbers, along with the fact that $a'_n+b'_n\leq a'_1+b'_1 +2(n-1)$, we find that
            \[a'_n+b'_n\geq a'_{n-1}+b'_{n-1}+2\geq \cdots\geq a'_1+b'_1+2(n-1)\geq a'_n+b'_n.\]
            Therefore, these equalities hold and  $a'_i+b'_i$ must increase by exactly $2$ per step.
        \end{enumerate}
        \end{proof}

        \setcounter{MaxMatrixCols}{15}
        \begin{figure}[htbp!]
            \def \rr {\textbf{1}}
            \def \bb {\textbf{1}}
            \centering
            \[
                \scalebox{0.75}{$\begin{bmatrix}
                   1&\rr& & & & & & & & & \\
                   1&1&\bb& & & & & & & & \\
                    &1&1&\rr&\rr& & & & & & \\
                    &1&1&1&\bb&\bb& & & & & \\
                    & &1&1&1&\rr& & & & & \\
                    & & &1&1&1&\bb&\bb& & & \\
                    & & & &1&1&1&\rr& & & \\
                    & & & &1&1&1&1&\bb& & \\
                    & & & & &1&1&1&1&\rr& \\
                    & & & & & & &1&1&1& \\
                    & & & & & & &1&1&1&1\\
                    & & & & & & & & &1&1
				\end{bmatrix}$}
                \quad
                \leftrightarrow  
                \quad 
                \begin{tikzpicture}[scale=0.5]
                    \coordinate (prev) at (0,0);
                    \coordinate (U) at (1,1);
                    \coordinate (H) at (1,0);
                    \coordinate (D) at (1,-1);
                    \def \steps {U,H,H,U,H,D,U,D,H,D}
                    
                    \foreach \step in \steps
                    {\coordinate (A) at ($(prev) + (\step)$);
                    \draw[shorten >=2pt, shorten <=2pt, line width=1.5pt, line cap=round] (prev) -- (A);
                    \coordinate (prev) at (A);}

                    \def \red {(1,0), (3,0), (4,1), (5,0), (7,0), (9,0)}
                    \def \blue{(2,0), (4,0), (5,1), (6,0), (7,1), (8,0)}

                \end{tikzpicture}
            \]
            \caption{A matrix $A\in \mathcal{CP}(12)$ and its associated Motzkin path $\pi_A$}
            \label{fig:CyclePetrie}
        \end{figure}
        
        The second condition of \cref{lemma:cycleCaseLemma} actually tells us there is an injection from $\mathcal{CP}(n)$ to the set of \textit{Motzkin paths} of length $n-1$. Recall that a Motzkin path is a path from $(0,0)$ to $(0,n-1)$ consisting of three kinds of steps $U=(1,1)$ (up step), $D=(1,-1)$ (down step), and $H=(1,0)$ (horizontal step) whose $y$-coordinate never goes below $0$. 
        
        For each $A\in \mathcal{CP}(n)$, we associate a Motzkin path $\pi_A$ as the following: We let the $i$-th step of $\pi_A$ be
        \[\begin{cases}
            U, & \text{if } b'_i=b'_{i-1}+2,\, a'_i=a'_{i-1}; \\
            H, & \text{if } b'_i=b'_{i-1}+1,\, a'_i=a'_{i-1}+1; \\
            D, & \text{if } b'_i=b'_{i-1}\phantom{{}+1},\, a'_i=a'_{i-1}+2. 
        \end{cases}\]
        Equivalently, the height of the $i$-th step in the Motzkin path is $b'_{i+1}-i-1 = i+1-a'_{i+1}$. From this perspective, having no empty column tells us that the path never goes below $0$; $[a'_n,b'_n]=[n-2,n]$ implies that the endpoint is $(n-1,0)$.

        \begin{example}
            \cref{fig:CyclePetrie} shows an example of $\mathcal{CP}(12)$, whose corresponding permutation $\tau$ is $(0\,2\,6\,1\,5\,10\,7\,11\,9\,4\,8\,3)$ in cycle notation. The associated Motzkin path is $UHHUHDUDHD$, as displayed on the right.

            The heights of the Motzkin path are $1,1,1,2,2,1,2,1,1$, which correspond to the numbers of $1$'s in the upper triangular area of the matrix in each column.
        \end{example}
        
        To tell if a Motzkin path comes from a cycle Petrie matrices requires work. However, we have a basic property for any Motzkin path that is induced by a cycle Petrie matrix.

        \newcommand{\blue}{\textcolor{blue}{blue}}
        \newcommand{\red}{\textcolor{red}{red}}
        \begin{lemma}\label{lemma:MotzkinChar}
            Let $A\in \mathcal{CP}(n)$ and $\pi_A\in \{U,D,H\}^{n-1}$ be its associated Motzkin path. For each step $s$, we mark it \blue{} (even) if there is an even number of $H$'s before $s$ (including itself), and \red{} (odd) otherwise. Then for each $i$, 
            \begin{enumerate}
                \item The number of \red{} (odd) $D$ before step $i$ is at most that of \blue{} (even) $U$;
                \item Symmetrically, the number of \blue{} $D$ before step $i$ is at most that of \red{} $U$.
            \end{enumerate}
        \end{lemma}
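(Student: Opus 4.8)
The plan is to read the two inequalities off from the linear-algebraic picture sketched just before the statement, after recasting everything in terms of the boundary sequences $a'_i,b'_i$. First I would make the subspace discussion precise: set $p_k=e_{2k-1}+e_{2k}$ and $q_k=e_{2k}+e_{2k+1}$, so that $V_{\mathrm{even}}=\langle p_k\rangle=\{x:x_{2k-1}=x_{2k}\ \forall k\}$ and $V_{\mathrm{odd}}=\langle q_k\rangle=\{x:x_1=0,\ x_{2k}=x_{2k+1}\ \forall k\}$. A blue column $v[a'_i,b'_i]$ (even $a'_i$, hence even $b'_i$ by \cref{lemma:cycleCaseLemma}) is the consecutive sum $p_{a'_i/2+1}+\dots+p_{b'_i/2}$ and so lies in $V_{\mathrm{even}}$, while a red column lies in $V_{\mathrm{odd}}$. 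Since the $n-1$ columns of $A$ are linearly independent (they span the hyperplane $\{\sum_i(-1)^ix_i=0\}$ of dimension $n-1$, as noted above) and since $\dim V_{\mathrm{even}}+\dim V_{\mathrm{odd}}=\lfloor n/2\rfloor+\lfloor (n-1)/2\rfloor=n-1$, a dimension count forces the blue columns to be a basis of $V_{\mathrm{even}}$ and the red columns a basis of $V_{\mathrm{odd}}$. Writing each blue column as the interval $[a'_i/2,b'_i/2]$ of cut-points in $\{0,\dots,\lfloor n/2\rfloor\}$, the combination of independence and correct cardinality is equivalent to saying that these intervals form a spanning tree; likewise for the red columns.

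Next I would pass to the two nondecreasing sequences $\alpha_i=a'_i$ and $\beta_i=b'_i$, which share the common parity recording the colour (blue $=$ even), satisfy $\beta_i-\alpha_i=2h_i\ge 2$, and evolve by $(\Delta\alpha,\Delta\beta)=(0,2),(2,0),(1,1)$ on a $U$-, $D$-, $H$-step respectively. A $+2$ step of $\alpha$ is exactly a $D$-step and skips one integer: a red $D$ skips an even value (from $2s-1$ to $2s+1$) and a blue $D$ skips an odd value; dually a blue $U$ is a $+2$ step of $\beta$ skipping an odd value and a red $U$ skips an even value. Hence in any prefix the number of red $D$'s equals the number of even integers skipped by $\alpha$, and the number of blue $U$'s equals the number of odd integers skipped by $\beta$, so claim (1) reads ``the number of even values skipped by $\alpha$ is at most the number of odd values skipped by $\beta$,'' and claim (2) is its mirror. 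I would also record that adding (1) and (2) recovers the plain inequality (number of $D$'s)$\le$(number of $U$'s) on every prefix, the classical lattice-path condition; the genuine content is therefore the colour-refinement, which must come from the subspace constraints and not from the shape of the path alone.

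Finally I would prove (1), with (2) following by the analogous argument applied to the red columns and $V_{\mathrm{odd}}$ (equivalently by the top–bottom reflection that interchanges $U$ and $D$ and the two colours). Viewing the window $[a'_i+1,b'_i]$ as a FIFO queue of rows, a red $D$ removes the two topmost rows $2s,2s+1$, and the even row $2s$ must have been inserted at the bottom earlier by a blue step; I would send each red $D$ to the blue $U$ responsible for that insertion. The subtlety, and the step I expect to be the main obstacle, is that $2s$ may instead have entered by a blue $H$-step (a sideways slide of the window), so that no blue $U$ is visible locally. This is precisely where I would use that the blue columns \emph{span} all of $V_{\mathrm{even}}$, i.e.\ that their cut-point intervals form a connected tree: spanning forces each $H$-inserted even pair to be built from $p_k$'s that were first created at the bottom by genuine blue $U$-moves, and the tree structure makes the resulting assignment injective, giving an injection from red $D$'s into blue $U$'s on every prefix. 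Concretely I would run this as a minimal-counterexample argument: were (1) to fail first at some red $D$, the prefix would force more bottom-creations of $p_k$'s than the blue intervals can independently account for, contradicting that the blue columns form a basis of $V_{\mathrm{even}}$.
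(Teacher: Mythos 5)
Your first paragraph sets up exactly the fact on which the paper's proof rests---the blue columns are independent vectors lying in the ``even'' parity subspace and the red columns in the ``odd'' one---so the approach is the right one. But the derivation of the two inequalities is not actually there. Your second paragraph is only a reformulation (red $D$'s correspond to even values skipped by $a'$, blue $U$'s to odd values skipped by $b'$), and your third paragraph, which is where claim (1) must be established, explicitly labels its central step ``the main obstacle'' and then substitutes an unexecuted plan for it: the injection from red $D$'s to blue $U$'s is never constructed (you concede that a red $D$ may have no locally visible blue $U$ when the relevant rows entered the window via $H$-steps), the assertion that ``the tree structure makes the resulting assignment injective'' is not proved, and the closing minimal-counterexample sentence merely restates the hope that independence implies the prefix inequality rather than deriving it. There is also a small slip in the dimension count: $\dim\{x:x_{2k-1}=x_{2k}\ \forall k\}$ equals $\lceil n/2\rceil$ for odd $n$, not $\lfloor n/2\rfloor$; the count only comes out to $n-1$ after intersecting both parity subspaces with the hyperplane $\sum_i(-1)^ix_i=0$ in which all columns live.

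The missing step is short, and it is how the paper closes the argument: for any prefix, the first $\ell$ red (resp.\ blue) columns are $\ell$ linearly independent interval-vectors whose supports all end at or before the $b'$ of the $\ell$-th one; after contracting the parity subspace these become $\ell$ independent intervals living on at most $(b'-1)/2$ (resp.\ $b'/2$) contracted coordinates, which forces $b'\ge 2\ell+1$ (resp.\ $b'\ge 2\ell$). Substituting $b'_{i+1}=2+2u_i+h_i$ and $\ell=u_i^o+d_i^o+h_i^o$ (resp.\ $\ell=1+u_i^e+d_i^e+h_i^e$) and simplifying yields $u_i^e\ge d_i^o$ and $u_i^o\ge d_i^e$, which are precisely claims (1) and (2). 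Until your injection or minimal-counterexample argument is carried out to the point of producing such a quantitative bound, the proposal has a genuine gap at exactly this step.
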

        We denote this property as the \textit{\red-\blue} rule. \cref{fig:MotzkinColor} gives an example of the red-blue coloring.
        
        \begin{proof}
            We can classify the columns of $A$ by the parity of $a'_i$ (thus by $b'_i$). Note that the columns of $A$ span the particular subspace $\{(x_1,x_2,\dotsc,x_n)\in \R^n: \sum_{i=1}^n (-1)^i x_i=0\}$, and those columns with even $a'_i$'s should generate $\{(x_1,\dotsc,x_n)\in \R^n: x_{2k-1}=x_{2k} \forall k\}$, whereas those with odd $a'_i$'s should generate $\{(x_1,\dotsc,x_n)\in \R^n: x_1=0, x_{2k}=x_{2k+1} \forall k\}$.
        
            Now we derive these two conditions from the fact that the $\ell$-th even column should have $b'\geq 2\ell$, and the $\ell$-th odd column should have $b'\geq 2\ell+1$, otherwise they would not be linearly independent. 

            Let $u_i,d_i,h_i$ denote the number of $u$'s, $d$'s, $h$'s up to the $i$-th step, respectively, and $u_i^e,u_i^o,d_i^e,d_i^o,h_i^e,h_i^o$ denote their corresponding variants which only count steps in even(\blue)/odd(\red) up to the $i$-th step. 
            Using these symbols we have $b_{i+1}' =2+2u_i+h_i$. 
            By the definition of \blue/\red{} steps, we also have $h_i^e=\floor{\frac{h_i}{2}}, h_i^o=\ceil{\frac{h_i}{2}}$.

            If the $i$-th step is \red{} ($h_i$ is odd), then there are $\ell=u_i^o+d_i^o+h_i^o$ odd columns up to column $i$. The condition $b'\geq 2\ell+1$ becomes
            \[2+2u_i+h_i \geq 2(u_i^o+d_i^o+h_i^o)+1,\]
            which is $2u_i^e \geq 2d_i^o$, the first condition;
            
            If the $i$-th step is \blue{} ($h_i$ is even), then there are $\ell=1+u_i^e+d_i^e+h_i^e$ even columns up to column $i$. The condition $b'\geq 2\ell$ becomes 
            \[2+2u_i+h_i \geq 2(1+u_i^e+d_i^e+h_i^e),\]
            which is $2u_i^o\geq 2d_i^e$, the second condition.
        \end{proof}
        
            The \red-\blue{} rule is not a full characterization of cycle Petrie matrices. There are some Motzkin paths satisfying the \red-\blue{} rule, but are not associated with any of the elements in $\mathcal{CP}(n)$ (the smallest one being $UHUHDHD$). We will give one characterization in \cref{sec:detour}.

        There is an important fact about Motzkin paths satisfying such conditions that we will use:
        \begin{proposition}\label{prop:MotzkinParity}
            Let $\pi$ be a Motzkin path satisfying the \red-\blue{} rule. Consider the lattice points strictly below $\pi$ with $y\geq 0$, then there are an equal number of points with odd coordinate sums and with even coordinate sums.
        \end{proposition}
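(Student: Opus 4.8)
The plan is to evaluate the signed count $S=\sum(-1)^{x+y}$ taken over the lattice points $(x,y)$ with $y\ge 0$ lying strictly below $\pi$, and to show that $S=0$; since any two cells that are adjacent in a column (or a row) have opposite coordinate-sum parities, the equation $S=0$ is precisely the claimed balance between even and odd coordinate sums. Writing $y_x$ for the height of $\pi$ at abscissa $x$, I would first organize the region by rows: in a fixed row $y\ge 0$ the cells below $\pi$ are exactly the $(x,y)$ with $y_x>y$, and they split into maximal horizontal runs, one for each \emph{excursion} of $\pi$ above level $y$. A run of even length contributes $0$ to $S$ and a run of odd length contributes $\pm 1$, so only odd-length excursions survive, giving the bookkeeping identity
\[ S=\sum_{\text{odd-length excursions}} (-1)^{y+\ell}, \]
where $y$ is the level of the excursion and $\ell$ its left abscissa.

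Next I would pass to the canonical non-crossing matching of the up-steps and down-steps of $\pi$ (its arches). An excursion above level $y$ is bounded by the $U$ crossing from $y$ to $y+1$ and the $D$ crossing back down, and these two steps are matched; this is a bijection between excursions and arches. The key local observation is that an excursion has odd length exactly when its arch encloses an \emph{even} number of $H$-steps, which, by the definition of the colouring in \cref{lemma:MotzkinChar}, is exactly the statement that the matched $U$ and $D$ receive the \emph{same} colour. A short parity computation (comparing $(-1)^{y+\ell}$ with the colour of the bounding $U$) then shows that a monochromatic arch contributes $-1$ when it is blue--blue and $+1$ when it is red--red, whereas every two-coloured arch contributes $0$. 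Letting $n_{bb}$ and $n_{rr}$ denote the numbers of blue--blue and red--red arches, this collapses the identity above to $S=n_{rr}-n_{bb}$.

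It then remains to prove $n_{bb}=n_{rr}$, and this is where the red--blue rule enters. I would apply the two ballot inequalities of \cref{lemma:MotzkinChar} at the final step of $\pi$, together with the fact that $\pi$ returns to height $0$ so that the total numbers of $U$'s and $D$'s agree. The inequalities give (number of red $D$) $\le$ (number of blue $U$) and (number of blue $D$) $\le$ (number of red $U$); adding them and using that the total counts of $U$ and $D$ coincide forces both to be equalities. Splitting each colour class of $U$'s and $D$'s by the colour of its partner in the matching, the equality (number of blue $U$) $=$ (number of red $D$) reads $n_{bb}+n_{br}=n_{br}+n_{rr}$, i.e. $n_{bb}=n_{rr}$, whence $S=0$. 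I expect the main obstacle to be the sign bookkeeping of the first two paragraphs --- matching up ``odd-length run'', ``even $H$-content of the enclosing arch'', and ``monochromatic endpoints'', and nailing the parity computation that fixes the contribution of a monochromatic arch to its colour; once this is correct, the red--blue hypothesis is used only through the clean global identity $n_{bb}=n_{rr}$, and the conclusion is immediate.
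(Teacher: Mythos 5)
Your argument is correct, but it is genuinely different from the one in the paper. The paper proves the proposition by induction on the number of lattice points under the path: it takes the highest up step $U_0$ and the first down step $D_0$ after it, swaps them when they have the same colour (an even number of intervening $H$'s) and flattens both when the colours differ, and checks that each operation deletes equally many even and odd points while preserving the red--blue rule, terminating at the all-flat path. You instead compute the signed sum $S=\sum (-1)^{x+y}$ in one pass. Your bookkeeping is sound: the run at level $y$ under an arch $(i,j)$ is $\{i,\dotsc,j-1\}$ and has length $j-i$, which is odd exactly when the $j-i-1$ enclosed steps contain an even number of $H$'s (the enclosed $U$'s and $D$'s pair off), i.e.\ exactly when the two endpoints share a colour; and since $i+y\equiv 1+h_i \pmod 2$, a surviving run contributes $-1$ for a blue--blue arch and $+1$ for a red--red arch, giving $S=n_{rr}-n_{bb}$ for an \emph{arbitrary} Motzkin path. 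The hypothesis then enters only through the terminal ballot equalities: summing the two inequalities of \cref{lemma:MotzkinChar} at the last step and using $\#U=\#D$ forces $\#(\text{blue }U)=\#(\text{red }D)$ and $\#(\text{blue }D)=\#(\text{red }U)$, whence $n_{bb}=n_{rr}$ and $S=0$. Your route buys a sharper intermediate identity and isolates exactly where the red--blue rule is used, and it avoids having to verify that the paper's peak surgery preserves the red--blue rule (a point the paper asserts without detail); the paper's route is shorter to state and stays closer to the peak-flattening manipulations used elsewhere in that section.
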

        \begin{figure}[htbp!]
            \def \rr {\textbf{\textcolor{red}{1}}}
            \def \bb {\textbf{\textcolor{green!70!black}{1}}}
            \centering
            \[
                \scalebox{0.75}{$\begin{bmatrix}
                   1&\rr& & & & & & & & & \\
                   1&1&\bb& & & & & & & & \\
                    &1&1&\rr&\rr& & & & & & \\
                    &1&1&1&\bb&\bb& & & & & \\
                    & &1&1&1&\rr& & & & & \\
                    & & &1&1&1&\bb&\bb& & & \\
                    & & & &1&1&1&\rr& & & \\
                    & & & &1&1&1&1&\bb& & \\
                    & & & & &1&1&1&1&\rr& \\
                    & & & & & & &1&1&1& \\
                    & & & & & & &1&1&1&1\\
                    & & & & & & & & &1&1
				\end{bmatrix}$}
                \leftrightarrow  
                \begin{tikzpicture}[scale=0.5]
                    \coordinate (prev) at (0,0);
                    \coordinate (U) at (1,1);
                    \coordinate (H) at (1,0);
                    \coordinate (D) at (1,-1);
                    \def \steps {U/blue,H/red,H/blue,U/blue,H/red,D/red,U/red,D/red,H/blue,D/blue}
                    
                    \foreach \step/\CoL in \steps
                    {\coordinate (A) at ($(prev) + (\step)$);
                    \draw[shorten >=2pt, shorten <=2pt, line width=1.5pt, line cap=round, color=\CoL] (prev) -- (A);
                    \coordinate (prev) at (A);}

                    \def \red {(1,0), (3,0), (4,1), (5,0), (7,0), (9,0)}
                    \def \blue{(2,0), (4,0), (5,1), (6,0), (7,1), (8,0)}
                    \foreach \r in \red{ \fill[red] \r+(0,0.4) circle (3pt); }
                    \foreach \b in \blue{ \draw[green!70!black] \b+(0,0.4) circle (3pt); }

                    \draw[dashed] (3,1)--(6,1);
                    \draw[dashed] (6,1)--(7,0)--(8,1);
                \end{tikzpicture}
            \]
            \caption{A matrix $A\in \mathcal{CP}(12)$, its associated Motzkin path $\pi_A$ with red-blue colors, and possible removals of peaks. Points are drawn slightly higher than they should be for aesthetic purposes. }
            \label{fig:MotzkinColor}
        \end{figure}
        \begin{proof}
            We prove by induction on the number of lattice points below the path. Let $U_0$ be the highest up step and $D_0$ be the first down step after $U_0$. Between $U_0$ and $D_0$, there are $h$ $H$'s and no other up/down steps.
            \begin{itemize}
                \item If $h$ is even, then $U_0$ and $D_0$ are steps of the same color, we dent this peak by switching $D_0$ and $U_0$. This removes $h+1$ even lattice points and $h+1$ odd lattice points.

                \item If $h$ is odd, then $U_0$ and $D_0$ are of different color. We flatten this peak by changing both $U_0$ and $D_0$ into flat steps. This removes $h+1$ lattice points, half of which are even.
            \end{itemize}
            In either case, we removed the same number of even and odd lattice points, and the remaining Motzkin path still satisfies the \red-\blue{} rule. 

            We eventually transform the Motzkin path into the all-flat Motzkin path, in which case the condition trivially holds.
        \end{proof}
        \begin{example}
            \cref{fig:MotzkinColor} follows the example of \cref{fig:CyclePetrie}. The red/green points represent points with odd/even coordinate sums.

            To the upper triangular area of the Petrie matrix, we color the $1$'s into red/green by reading the color of red/green points by column. The $1$'s in the same row get the same color.
            
            To show that there are the same number of red/green points, one first flattens the first peak from the $4$th to $6$th step, flips the peak at the $7$th and $8$th steps, then finally flattens the last two peaks to get the all-flat Motzkin path.
        \end{example}
        
        With this lemma, we can finish our proof for \cref{lemma:cycleCase}, which completes the proof of \cref{thm:InvoChangeWeight}.
        \begin{proof}
            By \cref{lemma:cycleCaseLemma}, the edges are reversed alternately. Hence, the orientation starting with a reverse edge gets $0$ or $1$ more reversed edge(s), giving the first assertion.

            For the second assertion, when $A$ is formed by rows $v[a_1,b_1],v[a_2,b_2],\dotsc,v[a_n,b_n]$, we first claim that 
            \[\inv(\tau_{\sigma_2})-\inv(\tau_{\sigma_1}) = \sum_{k=1}^{n-1} (-1)^{k-1}(b_k-k)+[2\mid n].\] 
            Note that the term $\sum_{k=1}^{n-1} (-1)^{k-1}(b_k-k)$ corresponds to the number difference of the red and green colored $1$'s. By \cref{prop:MotzkinParity}, this term is $0$ and hence the second assertion follows.
            
            \begin{proof}[Proof (of claim)]
                Due to the totally Petrie shape of $A$, the permutation $\tau_{\sigma_i}$ is increasing on the set of excedence indices\footnote{An index $i$ of $\tau$ is an \emph{excedence} if $\tau_i>i$.} and also increasing in indices of nonexcedence. For permutations (of $0,1,\dotsc,n-1$) with this property, the inversion number is 
            \[\inv(\tau)=\sum_{\substack{0\leq i\leq n-1\\ \tau_i > i}} \tau_i-i, \]
            since there are $\tau_i$ entries being smaller than $\tau_i$, with $i$ of which before $\tau_i$.

            Applying to $\tau_{\sigma_1}$, which has excedence indices $1,3,5,\dotsc$, and $\tau_{\sigma_2}$, which has excedence indices $0,2,4,\dotsc$, we find that 
            \[\inv(\tau_{\sigma_2})-\inv(\tau_{\sigma_1}) = \sum_{0\leq i\leq n-1} (-1)^i (b_{i+1}-i) =\sum_{1\leq k\leq n}(-1)^{k-1}(b_k-k+1).\]
            Note that the rows start with index $1$. As $b_n=n-1$ and $\sum_{1\leq k\leq n-1}(-1)^{k-1} = [2\mid n]$, the claim follows.
                \let\qedsymbol\relax
            \end{proof}

        \end{proof}    

	\section{Proof of Plethystic Pieri rule}\label{sec:Plethysm}
        
        The last of our results is to give another proof of the plethystic Pieri rule (\cref{thm:plethysticPieri}) for $E(t)$ using \cref{thm:refinedJJL}.

        Recall from \cref{sec:Symbols} that 
        \[\sum_{n=0}^\infty G(k,n) t^n = \prod_{i} (1+x_it+\cdots+x_i^{k-1}t^{k-1}) = \prod_i \frac{1-x_i^kt^k}{1-x_it}= H(t)\cdot E_k(-t^k),\]
        where $E_k(t)$ is the generating function of plethystic elementary symmetric function $e_n\circ p_k$, i.e. $E_k(t)=\sum_{n=0}^\infty (e_n\circ p_k)t^n=\prod_{i}(1+x_i^kt)$.

        \begin{proof}[Proof (of plethystic Pieri rule)]
            We will show that $E_k(u)\cdot s_\mu = \sum_{\Theta} (-1)^{\sum_{i=1}^n\htt(\Theta_i)}s_{\mu+\Theta} u^{|\Theta|}$, where $\Theta$ runs through all the ways of adding several left-justified $k$-ribbons of $\mu$, and $\mu+\Theta$ denotes the shape obtained by $\mu$ adding ribbons in $\Theta$.

            By \cref{thm:refinedJJL}, 
            \begin{align*}
                E_k(-t^k)\cdot H(t)\cdot s_\mu 
                &= \left(\sum_{n\geq 0}G(k,n)t^n\right) s_\mu \\&=\sum_{\nu}\sum_{\Theta} (-1)^{|\Theta|+\sum_{i=1}^{|\Theta|}\htt(\Theta_i)}s_{\mu+\Theta} t^{|\nu/\mu|+k|\Theta|},
            \end{align*}
            where the sum runs through pairs of $(\nu,\Theta)$ such that:
            \begin{itemize}
                \item $\nu/\mu$ is a horizontal strip;
                \item $\Theta$ is a sequence of left-justified $k$-ribbons that is based on $\nu$.
            \end{itemize}

            To reverse the operation of adding horizontal trip, we consider $E(-t) s_\mu$ rather than $s_\mu$. On one hand, we have $H(t)\cdot E(-t)\cdot s_\mu=s_\mu$; On the other hand, by (ordinary and dual) Pieri rules we expand it into $\sum_{\kappa}\sum_{\nu} s_\nu t^{|\nu/\kappa|}(-t)^{|\kappa/\mu|}$, where $\kappa,\nu$ runs through the shapes such that $\kappa/\mu$ is a vertical strip\footnote{Vertical strips are skew shapes where each row has at most one box.} and $\nu/\kappa$ is a horizontal strip.

            When we change $s_\mu$ to $E(-t)\cdot s_\mu$, we can get
            \begin{align*}
                E_k(-t^k) \cdot s_\mu&=(E_k(-t^k)\cdot H(t))\cdot E(-t)\cdot s_\mu\\
                &=(E_k(-t^k)\cdot H(t))\sum_{\kappa}s_\kappa(-t)^{|\kappa/\mu|}\\
                &=
                \sum_{\kappa}\sum_{\nu}\sum_{\Theta} (-1)^{|\Theta|+\sum_{i=1}^{|\Theta|}\htt(\Theta_i)}s_{\nu+\Theta}\, t^{k|\Theta|} t^{|\nu/\kappa|}(-t)^{|\kappa/\mu|},
                \end{align*}
            where 
            \begin{itemize}
                \item $\kappa,\nu$ runs through the shapes such that $\kappa/\mu$ is a vertical strip and $\nu/\kappa$ is a horizontal strip.
                \item $\Theta$ runs through the sequences of left-justified $k$-ribbons that is based on $\nu$.
            \end{itemize}
              
            Before adding $\Theta$, we know that the only nonvanishing term after adding $\nu$ and $\kappa$ is $s_\mu$ with coefficient $1$. Hence, the sum is also equal to
              \[\sum_{\Theta} (-1)^{|\Theta|+\sum_{i=1}^{|\Theta|}\htt(\Theta_i)}s_{\mu+\Theta} t^{k|\Theta|}=\sum_{\Theta} (-1)^{\sum_{i=1}^{|\Theta|}\htt(\Theta_i)}s_{\mu+\Theta}\cdot (-t^k)^{|\Theta|}
            \]
            Replacing the variable $-t^k$ with $u$ and the result follows.
        \end{proof}

        Applying $\omega$ involution to \cref{thm:plethysticPieri}, we get the Plethystic Pieri rule for $(h_n\circ p_k)$:
        \begin{corollary}
            Let $\lambda$ be a partition and $n,k\in \N$. Then
            \[(h_n\circ p_k)s_\mu = \sum_{\lambda} (-1)^{\sum_{i=1}^n\htt(\Theta_i)}s_\lambda,\]
            where $\lambda$ runs through the partitions obtained by $\mu$ adding $n$ top-justified $k$-ribbons $\Theta_1,\Theta_2,\dotsc,\Theta_n$.
        \end{corollary}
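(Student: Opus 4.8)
The plan is to obtain the stated formula for $(h_n\circ p_k)\,s_\mu$ by applying the fundamental involution $\omega$ to \cref{thm:plethysticPieri}, exactly as the preceding sentence advertises. Recall that $\omega$ is the ring involution of $\Lambda$ determined by $\omega(p_j)=(-1)^{j-1}p_j$, and that it satisfies $\omega(e_n)=h_n$ and $\omega(s_\lambda)=s_{\lambda'}$. The two quantities I must control are the behaviour of $\omega$ on the plethysm $e_n\circ p_k$ and the behaviour of conjugation on the ribbon data on the right-hand side; each contributes a global sign, and I expect these to cancel.

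First I would compute $\omega(e_n\circ p_k)$. Writing $e_n=\sum_{\lambda\vdash n}\tfrac{\varepsilon_\lambda}{z_\lambda}p_\lambda$ with $\varepsilon_\lambda=(-1)^{n-\ell(\lambda)}$, and using that $(\,\cdot\,)\circ p_k$ acts as the Adams operation sending $p_\lambda\mapsto p_{k\lambda}$, I get $e_n\circ p_k=\sum_{\lambda\vdash n}\tfrac{\varepsilon_\lambda}{z_\lambda}p_{k\lambda}$. Applying $\omega$ and using $\omega(p_{k\lambda})=(-1)^{kn-\ell(\lambda)}p_{k\lambda}$ collapses the sign $\varepsilon_\lambda(-1)^{kn-\ell(\lambda)}$ to the constant $(-1)^{(k-1)n}$, independent of $\lambda$, which yields
\[\omega(e_n\circ p_k)=(-1)^{(k-1)n}\,(h_n\circ p_k).\]
(One can sanity-check the sign at $k=1$, where both sides reduce to $\omega(e_n)=h_n$.) Consequently $\omega\bigl((e_n\circ p_k)s_\mu\bigr)=(-1)^{(k-1)n}(h_n\circ p_k)\,s_{\mu'}$.

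Next I would transform the right-hand side. Conjugation sends $s_\lambda\mapsto s_{\lambda'}$ and carries the whole skew picture to its transpose: a left-justified $k$-ribbon $\Theta_i$ used to reach $\lambda$ from $\mu$ becomes a top-justified $k$-ribbon $\Theta_i'$ reaching $\lambda'$ from $\mu'$, and since a $k$-ribbon occupying $r$ rows spans $k-r+1$ columns, its height transforms by $\htt(\Theta_i')=(k-1)-\htt(\Theta_i)$. Hence, as $\lambda$ runs over shapes obtained from $\mu$ by adding $n$ left-justified $k$-ribbons, the conjugates $\lambda'$ run over shapes obtained from $\mu'$ by adding $n$ top-justified $k$-ribbons, and $\sum_{i=1}^n\htt(\Theta_i)=n(k-1)-\sum_{i=1}^n\htt(\Theta_i')$. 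Applying $\omega$ to the right-hand side of \cref{thm:plethysticPieri} therefore produces the factor $(-1)^{n(k-1)}$ times $\sum_{\lambda'}(-1)^{\sum_i\htt(\Theta_i')}s_{\lambda'}$.

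Finally I would equate the two transformed sides. The factor $(-1)^{(k-1)n}$ occurs on both, so it cancels, leaving $(h_n\circ p_k)\,s_{\mu'}=\sum_{\lambda'}(-1)^{\sum_i\htt(\Theta_i')}s_{\lambda'}$ over top-justified $k$-ribbon additions to $\mu'$. Since $\mu$ is arbitrary, renaming $\mu'\mapsto\mu$, $\lambda'\mapsto\lambda$, and $\Theta_i'\mapsto\Theta_i$ gives precisely the asserted formula. The main obstacle is the first step: $\omega$ does not commute with $p_k$-plethysm, so the homogeneity of $e_n$ in degree $n$ must be invoked to see that the stray sign is constant in $\lambda$; once that collapse is established and the conjugate-ribbon height rule is recorded, the two global signs are engineered to cancel and everything else is bookkeeping.
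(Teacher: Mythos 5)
Your proposal is correct and follows exactly the route the paper takes: the paper derives this corollary in one line by applying the involution $\omega$ to \cref{thm:plethysticPieri}, and your computation of $\omega(e_n\circ p_k)=(-1)^{(k-1)n}(h_n\circ p_k)$ together with the conjugate-ribbon height rule $\htt(\Theta_i')=(k-1)-\htt(\Theta_i)$ supplies precisely the sign cancellation that the paper leaves implicit. Nothing is missing; you have simply written out the bookkeeping the authors omit.
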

        \begin{proof}
            As $\omega(e_n\circ p_k)=h_n\circ p_k$, we get 
            \[(h_n\circ p_k) s_\mu= \sum_{\Theta} (-1)^{\sum_{i=1}^n\htt(\Theta_i)}s_{(\mu'+\Theta)'},\]
            where $\Theta$ runs through the set of $n$ left-justified $k$-ribbons adding on $\mu'$. Equivalently, the sum is
            \[\sum_{\Theta} (-1)^{\sum_{i=1}^n\htt(\Theta_i)}s_{\mu+\Theta},\]
            where $\Theta$ runs through the set of $n$ top-justified $k$-ribbons adding on $\mu$. 
        \end{proof}
    \section{Concluding remark}\label{sec:Remark}
	\cref{thm:PetWeightInvo} provided a weighted sum formula to calculate the determinant of Petrie matrices. The benefit of the formula is that the criterion of being a good orientation can be checked locally (each non-last vertex has outdegree one), whereas the treeness of a graph is a global criterion. Using this,  \cref{thm:refinedJJL} gave a combinatorial formula for $\pet_k(\lambda,\mu)$.

        We also explained the appearance of power-of-2 phenomenon in the number of $k$-good pairs of $\lambda/\mu$ with $n$ ribbons considered in \cite{jin2024pierilikerulepetriesymmetric} via \cref{thm:binomial}. 
        An interesting question would be determining the statistics $\mathsf{c}(\lambda,\mu)$ and $\mathsf{nc}(\lambda,\mu)$ directly from the partition $\lambda,\mu$.

        Lastly, we gave from Petrie symmetric functions an alternative of the plethystic Pieri rule.
	

        \subsection{More on cycle Petrie matrices}\label{sec:detour}
        In \cref{sec:proofMotzkin} we introduced a new combinatorial object called \emph{cycle Petrie matrices}. Elements in $\mathcal{CP}(n)$ correspond to many other existing structures in the literature, but with some modification. We now discuss all we have discovered so far.

        \begin{proposition}\label{prop:extendTotallyPetrie}
            Any totally Petrie square matrix can be written in the form $\Pet_k(\lambda,\mu)$.
        \end{proposition}
        \begin{proof}
            It suffices to show that every totally Petrie matrix is a submatrix of a totally Petrie matrix whose rows each have the same number of $1$'s.

            Let $\alpha_i$ be the number of $1$'s in row $i$. We induct on the variation $v=\sum_{i=1}^{n-1}|\alpha_i-\alpha_{i+1}|$. $v=0$ when every row has the same number of $1$'s.

            Otherwise, if $\alpha_i>\alpha_{i+1}$, we add a column to the matrix, after the end of $1$ in row $i$, with $1$'s starting at row $i+1$ and ending at some row $i'$ so that the matrix is still a totally Petrie matrix. See \cref{fig:extendTotallyPetrie} for an example. If $\alpha_{i'}<\alpha_{i'+1}$, then we now decreased the variation by $2$. If not, then we repeat this procedure by adding another column of $1$'s at the place where row $i'$ ends its $1$'s, with $1$'s in indices $i'+1,i'+2,\dotsc,i''$ so that it remains totally Petrie. We do this until we stop or run out of rows; in the latter case, the variation drops by $1$.
            
            The case $\alpha_i<\alpha_{i+1}$ is similar.
        \end{proof}
        \begin{figure}[htbp!]
            \centering
            \[
                \scalebox{0.75}{$\begin{bmatrix}
                   1&1& & & & & & & & & \\
                   1&1&1& & & & & & & & \\
                    &1&1&1&1& & & & & & \\
                    &\textcolor{red}1&\textcolor{red}1&\textcolor{red}1&\textcolor{red}1&\textcolor{red}1& & & & & \\
                    & &\textcolor{red}1&\textcolor{red}1&\textcolor{red}1&\textcolor{red}1& & & & & \\
                    & & &1&1&1&1&1& & & \\
                    & & & &1&1&1&1& & & \\
                    & & & &1&1&1&1&1& & \\
                    & & & & &1&1&1&1&1& \\
                    & & & & & & &1&1&1& \\
                    & & & & & & &1&1&1&1\\
                    & & & & & & & & &1&1
				\end{bmatrix}$}
                \quad
                \implies  
                \quad 
                \scalebox{0.75}{$\begin{bmatrix}
                   1&1& & & & & & & & & \\
                   1&1&1& & & & & & & & \\
                    &1&1&1&1& & & & & & \\
                    &\textcolor{red}1&\textcolor{red}1&\textcolor{red}1&\textcolor{red}1&\textcolor{red}1& & & & & \\
                    & &\textcolor{red}1&\textcolor{red}1&\textcolor{red}1&\textcolor{red}1&\textcolor{green!80!black}1 & & & & \\
                    & & &1&1&1&\textcolor{green!80!black}1&1&1& & & \\
                    & & & &1&1&\textcolor{green!80!black}1&1&1& & & \\
                    & & & &1&1&\textcolor{green!80!black}1&1&1&1& & \\
                    & & & & &1&\textcolor{green!80!black}1&1&1&1&1& \\
                    & & & & & & & &1&1&1&\textcolor{green!80!black}1 \\
                    & & & & & & & &1&1&1&\textcolor{green!80!black}1&1\\
                    & & & & & & & & & &1&\textcolor{green!80!black}1&1
				\end{bmatrix}$}
            \]
                \caption{One induction step in \cref{prop:extendTotallyPetrie}}
            \label{fig:extendTotallyPetrie}
        \end{figure}
        
        \subsubsection{Nonnesting cycle permutation}
            We say a permutation $\pi$ is \textit{nonnesting} if for each distinct $i,j$, let $\{a_1,a_2\}=\{i,\pi(i)\}$ and $\{b_1,b_2\}=\{j,\pi(j)\}$, where $a_1\leq a_2,b_1\leq b_2$, then $a_1<b_1\leq b_2<a_2$ does not happen. 

            For $n\geq 3$, each matrix in $\mathcal{CP}(n)$ corresponds to \textbf{$2$} nonnesting cycle permutations of order $n$, one for each orientation of the cycle.

            \cref{lemma:cycleCase} can then be translated into the following corollary:
            \newcommand{\spin}{\mathsf{spiral}^+}
            \begin{corollary}\label{Cor:cycleCaseCor}
                Given any permutation $\sigma$, we define a statistic $\spin(\sigma)$ to be the number of index pairs $(i,j)$ such that $i\leq \sigma(j)<\sigma(i)\leq j$.

                Then for a nonnesting cycle permutation $\pi$ of length $n$, we have $|\spin(\pi)-\spin(\pi^{-1})|=[2|n]$.
            \end{corollary}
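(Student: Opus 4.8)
The plan is to recognize \cref{Cor:cycleCaseCor} as the permutation-statistics shadow of \cref{lemma:cycleCase}, transported along the correspondence between cycle Petrie matrices and nonnesting cycle permutations. The entire difference $|\spin(\pi)-\spin(\pi^{-1})|$ should match the inversion difference $\inv(\tau_{\sigma_2})-\inv(\tau_{\sigma_1})$ produced by the two cyclic orientations of a single cycle, so that \cref{lemma:cycleCase} delivers the value $[2\mid n]$ verbatim.

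First I would set up the dictionary precisely. Given a nonnesting cycle permutation $\pi$ of order $n\geq 3$, I would produce the matrix $A\in\mathcal{CP}(n)$ whose Petrie graph has directed edges $\pi(v)\to v$, and check that the consecutive-ones/column structure forced by \cref{lemma:cycleCaseLemma} is exactly the nonnesting condition, so that $\pi\mapsto A$ is well defined and hits every nonnesting $n$-cycle. Appending a zero column makes $A$ an $n\times n$ Petrie matrix whose graph is this cycle together with one isolated terminal vertex; its only two good orientations are the two cyclic orientations, which are therefore the good orientations $\sigma$ and $\varphi(\sigma)$ paired by the cycle involution, corresponding to $\pi$ and $\pi^{-1}$. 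Since an $n$-cycle with $n\geq 3$ satisfies $\pi\neq\pi^{-1}$, these are genuinely distinct; this is also why $n\geq 3$ is needed, as for $n=2$ one has $\pi=\pi^{-1}$ and the claimed value $[2\mid n]=1$ would fail.

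The combinatorial heart is the identity $\spin(\pi)=\inv(\tau_{\sigma})$ and $\spin(\pi^{-1})=\inv(\tau_{\varphi(\sigma)})$, where $\sigma$ orients the edges as $\pi(v)\to v$ (so that reversing $\sigma$ yields $\sigma(\pi^{-1})=\varphi(\sigma)$). To prove it I would read both sides as a count of the same crossing pairs of directed edges on the cycle. Unwinding the definition, a pair $(i,j)$ contributes to $\spin(\pi)$ exactly when $i$ indexes an ``up'' (excedance) edge $i\to\pi(i)$ and $j$ a ``down'' (deficiency) edge $j\to\pi(j)$ whose spans interlace as $i\leq\pi(j)<\pi(i)\leq j$; because $\pi$ is nonnesting, every overlapping pair of arcs of opposite type interlaces in precisely this way, so $\spin(\pi)$ counts all opposite-type crossings. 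On the other side, since these good orientations give permutations $\tau_\sigma$ whose excedance indices and nonexcedance indices are each increasing, the formula $\inv(\tau_\sigma)=\sum_{\tau_\sigma(i)>i}(\tau_\sigma(i)-i)$ from the proof of \cref{lemma:cycleCase} expresses $\inv(\tau_\sigma)$ as the same crossing count; matching the two amounts to checking that recording the sources of one orientation and the targets of the other aligns the excedance/deficiency roles, which is exactly the ``opposite orientation'' pairing above (a small example already confirms that $\spin$ of the forward cycle equals $\inv$ of the reverse $\tau$).

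With the identity in hand the corollary is immediate: $\{\spin(\pi),\spin(\pi^{-1})\}=\{\inv(\tau_\sigma),\inv(\tau_{\varphi(\sigma)})\}$, and \cref{lemma:cycleCase} gives $\inv(\tau_{\varphi(\sigma)})-\inv(\tau_\sigma)=[2\mid n]$ after the harmless normalization that $\varphi(\sigma)$ reverses the first row, whence $|\spin(\pi)-\spin(\pi^{-1})|=[2\mid n]$. I expect the main obstacle to be the bookkeeping in the identity $\spin(\pi)=\inv(\tau_\sigma)$: pinning down the correct reversed orientation pairing, and verifying that the nonnesting hypothesis makes the interlacing $i\leq\pi(j)<\pi(i)\leq j$ automatic for every opposite-type overlapping pair, so that no boundary or self-overlap terms are gained or lost in the translation.
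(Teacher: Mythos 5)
Your proposal is correct and follows the same route as the paper's own (much terser) proof: identify $\pi$ and $\pi^{-1}$ with the two good orientations of the associated cycle Petrie matrix, establish the identity $\spin(\pi)=\inv(\tau_\sigma)$, and invoke \cref{lemma:cycleCase} to get the value $[2\mid n]$. The extra details you supply — the explicit dictionary with nonnesting cycles, the crossing-count reading of $\spin(\pi)=\inv(\tau_\sigma)$ via the excedance/nonexcedance structure, and the caveat that $n\geq 3$ is needed since $\pi=\pi^{-1}$ for $n=2$ — are exactly the steps the paper leaves implicit (the last two appearing only in the surrounding remarks).
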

            \begin{proof}
                Consider the orientation $\sigma$ of a cycle Petrie matrix corresponding to $\pi$. Then we have $\inv(\tau_\sigma)=\spin(\pi_\sigma)$.

                The result follows from the fact that $\pi_{\varphi(\sigma)}=\pi^{-1}$ and that $|\inv(\tau_\sigma)-\inv(\tau_{\varphi(\sigma)})|=[2|n]$ by \cref{lemma:cycleCase}.
            \end{proof}
            In terms of this setting, an index pair $(i,j)$ is an inversion of $\tau_\sigma$ if and only if 
            \begin{itemize}
                \item $i$ is excedence and $j$ is a nonexcedence: $\tau_i>\pi^{-1}(\tau_i)$ and $\tau_j<\pi^{-1}(\tau_j)$,
                \item $\tau_i>\tau_j$.
            \end{itemize}

        \subsubsection{Polygonal line}
            We can visualize the cycle $\tau_\sigma$ by drawing a polygonal line $L(\tau_\sigma)$ in the following way: Let $\tau_\sigma=(c_1c_2c_3\cdots c_n)$ with $c_1=0$, we connect the points $(a_1,c_1), (a_2,c_2),\dotsc,(a_n,c_n),(a_{n+1},c_1)$, where $a_1=0$, and for $i\geq 1$, 
            \[a_{i+1}=\begin{cases}
                a_i+1, &\text{if } c_{i+1}>c_i\\
                a_i-1, &\text{if } c_{i+1}<c_i.
            \end{cases}\] 
        That is, $a_i$ records the difference of prefix ascents and descents along the path. 

            Each matrix in $\mathcal{CP}(n)$ corresponds to such a polygonal line, such that, when projecting every segment onto the $x$-axis, no segment lies in the interior of another segment.
        \begin{example}
            \cref{fig:polygonal} shows the same $A\in \mathcal{CP}(12)$ as in \cref{fig:CyclePetrie}. The corresponding cycle is $(0\,2\,6\,1\,5\,10\,7\,11\,9\,4\,8\,3)$. The diagram on the right is $L(\tau_{\sigma})$ with $y$-axis stretched $3\times$ longer.
        \end{example}

        \begin{figure}
            \centering
            \[
                \scalebox{0.75}{$\begin{bmatrix}
                   1&1& & & & & & & & & \\
                   1&1&1& & & & & & & & \\
                    &1&1&1&1& & & & & & \\
                    &1&1&1&1&1& & & & & \\
                    & &1&1&1&1& & & & & \\
                    & & &1&1&1&1&1& & & \\
                    & & & &1&1&1&1& & & \\
                    & & & &1&1&1&1&1& & \\
                    & & & & &1&1&1&1&1& \\
                    & & & & & & &1&1&1& \\
                    & & & & & & &1&1&1&1\\
                    & & & & & & & & &1&1
				\end{bmatrix}$}
                \leftrightarrow  
                \begin{tikzpicture}[scale=0.5, baseline=2.25cm]
                    \draw[fill=blue!20]
                        (0,0)
                        --(2,3) 
                        --(6,6) 
                        --(1,3) 
                        --(5,6) 
                        --(10,9) 
                        --(7,6) 
                        --(11,9) 
                        --(9,6) 
                        --(4,3) 
                        --(8,6) 
                        --(3,3)  
                        --cycle;
                       \fill (0,0) circle (3pt);
                       \fill (2,3) circle (3pt);
                       \fill (6,6) circle (3pt);
                       \fill (1,3) circle (3pt);
                       \fill (5,6) circle (3pt);
                       \fill (10,9) circle (3pt);
                       \fill (7,6) circle (3pt);
                       \fill (11,9) circle (3pt);
                       \fill (9,6) circle (3pt);
                       \fill (4,3) circle (3pt);
                       \fill (8,6) circle (3pt);
                       \fill (3,3) circle (3pt);
                \end{tikzpicture}
            \]
            \caption{An example $A\in \mathcal{CP}(12)$ and its corresponding $L(\tau_{\sigma})$.}
            \label{fig:polygonal}
        \end{figure}
        While being a polygonal line, it satisfies some other properties:
        \begin{lemma}
            Let $A\in \mathcal{CP}(n)$ and $L(\tau_{\sigma})$ be its corresponding polygonal line. Then
            \begin{enumerate}
            \item The polygonal line $L(\tau)$ does not intersect itself.
            \item If $A$ has size $n\times (n-1)$ and $n$ is even, then $L(\tau)$ is a closed curve, i.e. $a_{n+1}=0$.
            \end{enumerate}
        \end{lemma}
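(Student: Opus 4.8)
The plan is to read off two elementary features of $L(\tau_\sigma)$ from its definition and then combine them with the nonnesting property (I take $n\ge 3$, the correspondence with nonnesting cycle permutations being stated there; the case $n=2$ is degenerate and direct). First I would record that, drawing $L(\tau_\sigma)$ as in \cref{fig:polygonal} so that the $i$-th vertex has horizontal coordinate $c_i$ and height $a_i$, every segment rises by exactly $1$ from its left end to its right end and has slope in $(0,1]$: the endpoints $(c_i,a_i),(c_{i+1},a_{i+1})$ of a segment are consecutive in the traversal, so $a_{i+1}=a_i\pm1$, and the sign rule for $a$ forces the endpoint with the larger $c$ to be the higher one, while $|c_{i+1}-c_i|\ge1$. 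Second, I would recall that, since $A\in\mathcal{CP}(n)$ corresponds to a nonnesting cycle permutation, the arcs $\{c_i,c_{i+1}\}$ are nonnesting: on the horizontal axis no segment's projection lies strictly inside another's.

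For part (1) I would argue that two segments meet only if forced to. Distinct $c$-values are attained once each, so two non-consecutive segments share no horizontal endpoint; hence their projections are disjoint, interleaving, or nested. Nonnesting excludes nesting, and disjoint projections cannot meet, so the only case to rule out is interleaving projections $[p,q]$ and $[r,s]$ with $p<r<q<s$. Writing $H$ (resp. $H'$) for the height of the first (resp. second) segment at its left end, each segment is affine of rise $1$, so on the overlap $[r,q]$ the height difference $\Delta=(\text{first})-(\text{second})$ satisfies $\Delta(r)=(H-H')+\theta_1$ and $\Delta(q)=(H-H')+(1-\theta_2)$, with $\theta_1=\tfrac{r-p}{q-p}\in(0,1)$ and $\theta_2=\tfrac{q-r}{s-r}\in(0,1)$. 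Thus both values lie strictly between the consecutive integers $H-H'$ and $H-H'+1$; they are nonzero and of the same sign, and since $\Delta$ is affine it has no zero on $[r,q]$. Hence interleaving segments are disjoint. Finally, two consecutive segments share only the vertex $P_{i+1}$: they are two lines of distinct positive slopes through that point, so they meet nowhere else. Collecting the cases shows $L(\tau_\sigma)$ is simple.

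For part (2) I would invoke \cref{lemma:cycleCase} directly. The line $L(\tau_\sigma)$ is closed precisely when $a_{n+1}=0$, and $a_{n+1}=(\#\text{ascents})-(\#\text{descents})$ over the $n$ cyclic steps, a step being an ascent exactly when its edge is oriented toward the larger vertex. Reversing the cycle (the involution $\varphi$) interchanges ascents and descents, so relative to the all-ascending default one has $\#\text{descents}$ equal to $|\sigma|$ and to $|\varphi(\sigma)|$ for the two good orientations, whence $a_{n+1}=n-2|\sigma|$ and $|\sigma|+|\varphi(\sigma)|=n$. For even $n$, \cref{lemma:cycleCase} gives $|\varphi(\sigma)|-|\sigma|=0$, so $|\sigma|=n/2$ and $a_{n+1}=0$, i.e. $L(\tau_\sigma)$ is closed. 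The only genuine subtlety is the interleaving computation of part (1): it is exactly the nonnesting hypothesis that forbids the nested configuration, which is the single configuration in which two rise-$1$ segments could actually cross, so the argument would fail without it; everything else is bookkeeping and a one-line appeal to \cref{lemma:cycleCase}.
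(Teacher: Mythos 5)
Your proof is correct and follows essentially the same route as the paper: part (1) combines the positive slopes of the segments with the nonnesting of the arcs $\{c_i,c_{i+1}\}$, and part (2) reduces to the fact that exactly half of the cycle's edges are reversed when $n$ is even (you deduce this from \cref{lemma:cycleCase} together with $|\sigma|+|\varphi(\sigma)|=n$, while the paper reads it off directly from \cref{lemma:cycleCaseLemma}). Your explicit treatment of the interleaving case via the height difference $\Delta$ is in fact a worthwhile sharpening, since positivity of the slopes alone does not force intersecting segments to be nested --- one needs the rise-exactly-one property you isolate; the only loose end is that two consecutive segments may have \emph{equal} slopes (when $c_i$ and $c_{i+2}$ are equidistant from $c_{i+1}$ on opposite sides), but they then extend in opposite horizontal directions from the shared vertex and still meet only there.
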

        \begin{proof}
           Notice that the slope of each segment in the polygonal line is positive. For two of the segments to intersect, one's endpoints have coordinates strictly between the other's. This means that in $A$, there is a row whose $1$'s start later but end earlier than another row, which cannot occur in $A$.
        
           When $n$ is even, by \cref{lemma:cycleCaseLemma}, we have an equal number of reversed rows and non-reversed rows in $\sigma$, hence $a_{n+1}=0$.

        \end{proof}

        It is also likely true that if $n$ is even, the coordinates of the endpoints of $L$ are weakly increasing along with the $x$-coordinate. One possible proof is by writing $A$ as $\Pet_k(\lambda,\mu)$ for some $k,\lambda,\mu$ and discuss the topology of the enclosed regions of $\mathcal{P}'(\Pet_k(\lambda,\mu))$. It would be nice if a clean proof exists.
        \subsubsection{\red-\blue{} Motzkin paths, 2-colored noncrossing set partitions, and 1234-avoiding permutations} 
            We also showed that each matrix in $\mathcal{CP}(n)$ corresponds to a Motzkin path of length $n-1$, satisfying the \red-\blue{} rule described in \cref{lemma:MotzkinChar}. While this class is strictly larger than Motzkin paths induced by cycle Petrie matrices, we found more relations between other objects and \red-\blue{} rule Motzkin paths, rather than the cycle Petrie ones.
            
            Let $a_n$ be the number of Motzkin paths of length $n$ satisfying the \red-\blue{} rule. The first few terms are $1,1,2,3,6,11,23,47,103,225, 513,1173, 2761, 6529, 15767, 38265$, $94359, 233795, 586590, 1478295, \ldots$. Surprisingly, its odd terms follow A005802, and its even terms follow A216947. Both of them have combinatorial interpretations and are yet to have a Motzkin path interpretation. 
            The $n$-th term of A005802 counts the number of permutations in $S_n$ with the longest increasing subsequence of length $\leq 3$, 
            whereas A216947 counts the $2$-coloured noncrossing set partitions of $[n]$. 
            Both of the sequences are also related to invariant theory.
            For A216947, see, for example, \cite[A216947]{oeis} and \cite{bostan2019sequencesassociatedinvarianttheory,bostan2022combinatorialsequencesassociatedinvariant}; 
            For A005802, see \cite[A005802]{oeis}.

            This coincidence is true by numerical check \cite[A389602]{oeis}. As this is out of the scope of our paper, we leave the question here for interested readers.
            \begin{question}
                 For any $n\geq 1$, Find a combinatorial proof that the $n$-th term of A005802 (which starts with $0$-th term) corresponds to $a_{2n-1}$, and the $n$-th term of A216947 corresponds to $a_{2n}$.
            \end{question}

        \subsubsection{Combinatorial test for being a Motzkin path of \texorpdfstring{$\mathcal{CP}(n)$}{CP(n)}}
        For those Motzkin paths that correspond to an element in $\mathcal{CP}(n)$, although there are no known quick checking methods, there is a reflection method to check whether a Motzkin path satisfies the rule. To this purpose, we need to transform our Motzkin paths first:
        \begin{description}
            \item[Step 1:] We attach an up edge and a down edge before and after the Motzkin path, respectively. 
            \item[Step 2:] We push diagonals outward to get axis paths: We replace each up edge from $(i,j)$--$(i+1,j+1)$ into $(i,j)$--$(i,j+1)$--$(i+1,j+1)$, and each down edge from $(i,j)$--$(i+1,j-1)$ into $(i,j)$--$(i+1,j)$--$(i+1,j-1)$.
            \item[Step 3:] We connect $(0,0)$ to $(n+1,0)$ to enclose a region.
        \end{description}  

        \begin{proposition}
            A motzkin path of length $n-1$ corresponds to an element in $\mathcal{CP}(n)$ if and only if the ray starting from $(\frac{1}{2},0)$ with angle $45^\circ$, when reflected by the segments of the motzkin path, touches every unit segment before going back to $(0,\frac{1}{2})$. 
        \end{proposition}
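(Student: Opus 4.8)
The plan is to reduce the statement to a connectivity question and then read connectivity off the billiard orbit. Every Motzkin path $\pi$ of length $n-1$ determines a candidate $n\times(n-1)$ totally Petrie matrix $A(\pi)$: the column rule $a'_i+b'_i=a'_{i-1}+b'_{i-1}+2$ of \cref{lemma:cycleCaseLemma} together with the heights prescribed by $\pi$ fixes all columns $v[a'_i,b'_i]$, hence all rows. I would first show that the associated Petrie graph $G(\pi)$ is $2$-regular, so that it is a disjoint union of cycles; for paths satisfying the red--blue rule of \cref{lemma:MotzkinChar} the columns are independent and each content value is an endpoint of exactly two rows, while a path violating that rule yields a degenerate candidate which the test will reject. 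Under this reduction, $\pi$ arises from an element of $\mathcal{CP}(n)$ exactly when $G(\pi)$ is connected, i.e.\ a single $n$-cycle. Thus the whole proposition becomes: the $45^\circ$ orbit issued from $(\tfrac12,0)$ traces exactly one connected component of $G(\pi)$, and so touches every boundary segment iff $G(\pi)$ is connected.

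The key step is a dictionary between the geometry of the orbit and the graph $G(\pi)$. After Steps 1--3 the path becomes the rectilinear boundary of a region $R\subseteq[0,n+1]\times[0,\infty)$ whose walls are the unit horizontal and vertical segments of the pushed-out staircase together with the base $[0,n+1]\times\{0\}$. A $45^\circ$ ray alternates between the line families $x-y=\mathrm{const}$ and $x+y=\mathrm{const}$, flipping one coordinate of its direction at each wall. I would check, by a local case analysis at each $U$, $H$, $D$ step and at each pushed-out corner, that a maximal diagonal chord corresponds to one row of $A(\pi)$, equivalently one edge of $G(\pi)$, and that each reflection carries the orbit from an edge to the unique adjacent edge sharing the corresponding content vertex. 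A chord along $x-y=c$ should record an ``upward'' crossing and one along $x+y=c$ a ``downward'' crossing, so that the forced alternation reproduces the increment-by-$2$ rule for $a'_i+b'_i$, and the red--blue parity of the steps matches the even/odd content classes appearing in \cref{lemma:MotzkinChar}.

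Granting the dictionary, the ray starting at $(\tfrac12,0)$ along $x-y=\tfrac12$ corresponds to entering $G(\pi)$ along a distinguished edge incident to the corner $(0,0)$, following successive edges around one cycle, and returning to that corner along $x+y=\tfrac12$, i.e.\ arriving at $(0,\tfrac12)$; by time-reversal this is the same closed orbit read backwards. Since distinct cycles of $G(\pi)$ correspond to disjoint billiard orbits, the orbit through $(\tfrac12,0)$ meets a wall segment iff that segment's edge lies in its cycle. Hence the orbit touches every segment before returning to $(0,\tfrac12)$ exactly when $G(\pi)$ is a single cycle, which is precisely the condition $\pi\leftrightarrow\mathcal{CP}(n)$.

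I expect the main obstacle to be the local verification underlying the dictionary: one must confirm that the $45^\circ$ reflection law at every corner type of the pushed-out staircase — including the special corners created by Steps~1 and 3 near $(0,0)$ and $(n+1,0)$ — sends the incoming chord to the outgoing chord matching the correct neighboring edge, with consistent orientation. A secondary subtlety is the boundary bookkeeping at $(\tfrac12,0)$ and $(0,\tfrac12)$: I must check that the orbit is genuinely closed with the expected period and that ``touching every segment'' is equivalent to ``using every edge'' rather than merely ``visiting every vertex,'' so that the number of reflections matches the $n$ rows of $A(\pi)$.
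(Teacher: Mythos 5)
Your reduction---the billiard chords are the rows (edges) of the $2$-regular Petrie graph reconstructed from the path, each reflection passes to the unique adjacent edge at the shared vertex, so the orbit traces exactly one cycle and touches every segment iff that graph is a single $n$-cycle, i.e.\ iff the path comes from $\mathcal{CP}(n)$---is the same idea as the paper's argument, which is itself given only as ``a sketch of the proof via an example'' (the symmetrized matrix whose reflection route reads off the cycle of $\tau_\sigma$). One non-load-bearing slip: satisfying the red--blue rule does \emph{not} force the columns to be independent (the paper's own remark offers $UHUHDHD$ as a counterexample); what your argument actually needs is only the $2$-regularity of the candidate graph, which follows for every Motzkin path from the increment-by-two rule $a'_{i+1}+b'_{i+1}=a'_i+b'_i+2$, so disconnected (multi-cycle) candidates are exactly the ones the ray test rejects.
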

        We sketch the proof via an example.
        \begin{example}
            We still consider the matrix in $\mathcal{CP}(12)$ with corresponding permutation $(0\,2\,6\,1\,5\,10\,7\,11\,9\,4\,8\,3)$. We can shift the columns to obtain a matrix with a horizontal symmetric axis (shown red in \cref{fig:ray-shooting}). We can then read the Motzkin path by both the upper border and the lower border of ones, and we concatenate one up step before and one down step after the Motzkin path.

            Note that in the upper-right diagram, all vertical lines correspond to a reflection with respect to the red line. In the permutation, $10$ maps to $7$, which can be seen in both the matrix and the reflection route in the Motzkin path. This is a direct correspondence.
        \end{example}
        \begin{figure}
            \centering
            \[
            \setlength{\arraycolsep}{4pt}
                \scalebox{0.75}{$\begin{bmatrix}
                   1&1& & & & & & & & & \\
                   1&1&1& & & & & & & & \\
                    &1&1&1&1& & & & & & \\
                    &1&1&1&1&1& & & & & \\
                    & &1&1&1&1& & & & & \\
                    & & &1&1&1&1&1& & & \\
                    & & & &1&1&1&1& & & \\
                    & & & &1&1&1&1&1& & \\
                    & & & & &1&1&1&1&1& \\
                    & & & & & & &1&1&1& \\
                    & & & & & & &1&1&1&1\\
                    & & & & & & & & &1&1
				\end{bmatrix}$}
                \hspace{-11.5em}  
                \begin{tikzpicture}[xscale=0.35, yscale=0.32, baseline=1.65cm]
                    \draw[blue!50]
                        (0,11)
                        --(2,11)--(2,7) 
                        --(6,7) .. controls (6.3,7.5).. (6,8) 
                        --(1,8) .. controls (.7,8.5).. (1,9) 
                        --(5,9) --(5,3) -- (10,3);
                    \draw[olive,line width=.8pt]
                        (10,3) .. controls (10.3,2.5).. (10,2) 
                        --(7,2) .. controls (6.7,1.5).. (7,1);
                    \draw[blue!50,->] (7,1)
                        --(11,1) .. controls (11.3,0.5).. (11,0)
                        --(9,0) --(9,4)
                        -- (6,4);
                    \draw[red] (-.5,11.5)--(11.5,-.5);
                \end{tikzpicture}
                \quad \longrightarrow \quad 
                \scalebox{0.75}{$\begin{bmatrix}
                    & & & & & & & & & & \\
                    & & & & & & & & & & \\
                    & & & & & & & & & & \\
                    & & & &1&1& &1& & & \\
                    &1&1&1&1&1&1&1&1&1& \\
                   1&1&1&1&1&1&1&1&1&1&1\\
                   1&1&1&1&1&1&1&1&1&1&1\\
                    &1&1&1&1&1&1&1&1&1& \\
                    & & & &1&1& &1& & & \\
                    & & & & & & & & & & \\
                    & & & & & & & & & & \\
                    & & & & & & & & & & 
				\end{bmatrix}$}
                \hspace{-11.5em}  
                \begin{tikzpicture}[xscale=0.35, yscale=0.32, baseline=1.8cm]
                    \draw[blue!50]
                        (0,6)
                        --(2,8)--(2,4) 
                        --(6,8) .. controls (6.5,8.5).. (6,9) 
                        --(1,4) .. controls (.5,4.5).. (1,5) 
                        --(5,9) --(5,3)--(10,8);
                    \draw[olive,line width=0.8pt]
                        (10,8) .. controls (10.5,7.5).. (10,7) 
                        --(7,4) .. controls (6.5,3.5).. (7,3);
                    \draw[blue!50,->] (7,3)
                        --(11,7) .. controls (11.5,6.5).. (11,6)
                        --(9,4) --(9,8)
                        -- (6,5);
                    \draw[red, line width=.8pt] (-.5,6)--(11.5,6);
                    
                    \draw[shorten >=.2pt, shorten <=.2pt, line width=.8pt, line cap=round] (-.5,6).. controls (-.5,7) ..(.5,7);
                    \draw[orange, shorten >=0.2pt, shorten <=.2pt, line width=.8pt, line cap=round] (.5,7).. controls (.5,8) ..(1.5,8)
                    --(2.5,8)
                    --(3.5,8)
                    .. controls (3.5,9) .. (4.5,9)
                    --(5.5,9)
                    .. controls (6.5,9) ..(6.5,8)
                    .. controls (6.5,9) ..(7.5,9)
                    .. controls (8.5,9) ..(8.5,8)
                    --(9.5,8)
                    .. controls (10.5,8)..(10.5,7);
                    \draw[shorten >=.2pt, shorten <=.2pt, line width=.8pt, line cap=round] (10.5,7).. controls (11.5,7)..(11.5,6);

                    \draw[shorten >=.2pt, shorten <=.2pt, line width=.8pt, line cap=round] (-.5,6).. controls (-.5,5) ..(.5,5);
                    \draw[orange, shorten >=0.2pt, shorten <=.2pt, line width=.8pt, line cap=round] (.5,5).. controls (.5,4) ..(1.5,4)
                    --(2.5,4)
                    --(3.5,4)
                    .. controls (3.5,3) .. (4.5,3)
                    --(5.5,3)
                    .. controls (6.5,3) ..(6.5,4)
                    .. controls (6.5,3) ..(7.5,3)
                    .. controls (8.5,3) ..(8.5,4)
                    --(9.5,4)
                    .. controls (10.5,4)..(10.5,5);
                    \draw[shorten >=.2pt, shorten <=.2pt, line width=.8pt, line cap=round] (10.5,5).. controls (11.5,5)..(11.5,6);

                \end{tikzpicture}
            \]
            \[
            \setlength{\arraycolsep}{4pt}
                \longrightarrow\hspace{2em}
                \scalebox{0.75}{$\begin{matrix}
                    & & & & & & & & & & \\
                    & & & & & & & & & & \\
                    & & & & & & & & & & \\
                    & & & &1&1& &1& & & \\
                    &1&1&1&1&1&1&1&1&1& \\
                   1&1&1&1&1&1&1&1&1&1&1\\
                    & & & & & & & & & & \\
				\end{matrix}$}
                \hspace{-11em}  
                \begin{tikzpicture}[xscale=0.35, yscale=0.32, baseline=2.6cm]
                    \draw[blue!50]
                        (0,6)
                        --(2,8)--(4,6) 
                        --(6,8) .. controls (6.5,8.5).. (6,9) 
                        --(3,6)--(1,8) .. controls (.5,7.5).. (1,7) 
                        --(2,6)--(5,9) --(8,6)--(10,8);
                    \draw[olive,line width=0.8pt]
                        (10,8) .. controls (10.5,7.5).. (10,7) 
                        --(9,6)--(7,8) .. controls (6.5,8.5).. (7,9);
                    \draw[blue!50,->] (7,9) --(10,6)
                        --(11,7) .. controls (11.5,6.5).. (11,6)
                        --(9,8) -- (7,6) 
                        -- (6,7);
                    \draw[red, line width=.8pt] (-.5,6)--(11.5,6);
                    
                    \draw[shorten >=.2pt, shorten <=.2pt, line width=.8pt, line cap=round] (-.5,6).. controls (-.5,7) ..(.5,7);
                    \draw[orange, shorten >=0.2pt, shorten <=.2pt, line width=.8pt, line cap=round] (.5,7).. controls (.5,8) ..(1.5,8)
                    --(2.5,8)
                    --(3.5,8)
                    .. controls (3.5,9) .. (4.5,9)
                    --(5.5,9)
                    .. controls (6.5,9) ..(6.5,8)
                    .. controls (6.5,9) ..(7.5,9)
                    .. controls (8.5,9) ..(8.5,8)
                    --(9.5,8)
                    .. controls (10.5,8)..(10.5,7);
                    \draw[shorten >=.2pt, shorten <=.2pt, line width=.8pt, line cap=round] (10.5,7).. controls (11.5,7)..(11.5,6);
                \end{tikzpicture}
            \]
            \caption{Correspondence of ray-shooting and the permutation. A certain corresponding segment is highlighted.}
            \label{fig:ray-shooting}
        \end{figure}
        
        The cardinality of $\mathcal{CP}(n)$ are listed in \cref{table:CP(n)}.
        \newpage
        
        \begin{table}[htbp!]
            \centering
            \caption{Number of elements in $\mathcal{CP}(n)$, classified by the number $k$ of flat edges in associated Motzkin paths}
            \label{table:CP(n)}
            \begin{tabular}{r|l|ccccccccccccccc}
            $n$\textbackslash $k$&total & 1&2&3&4&5&6&7&8&9&10&11&12&13&14\\
            1& 1  & 1\\
            2& 1  &  &1\\
            3& 2  & 1& &1\\
            4& 3  &  &2& &1\\
            5& 6  & 1& &4& &1\\
            6&11  &  &4& &6& &1\\
            7&22  & 1&  &11&&9&&1\\
            8&45  &  &8&&24&&12&&1\\
            9&92  & 1&&27&&47&&16&&1\\
           10&198 & &16&&80&&81&&20&&1\\
           11&418 & 1&&65&&192&&134&&25&&1\\
           12&922 & &32&&242&&412&&205&&30&&1\\
           13&2006& 1&&159&&691&&812&&306&&36&&1\\
           14&4490& &64&&694&&1773&&1482&&434&&42&&1
            \end{tabular}
        \end{table}

\backmatter


\end{document}